\newtheorem{theorem}{Theorem}[section]
\newtheorem{proposition}[theorem]{Proposition}
\newtheorem{lemma}[theorem]{Lemma}
\newtheorem{corollary}[theorem]{Corollary}
\newtheorem{remark}[theorem]{Remark}
\newtheorem*{acknowledgement}{Acknowledgement}
\numberwithin{equation}{section}
\numberwithin{figure}{section}
\newcommand{\E}{\mathds{E}}
\newcommand{\Prob}{\mathds{P}}
\renewcommand{\rho}{\varrho}
\newcommand{\R}{\mathbb{R}}
\newcommand{\N}{\mathbb{N}}
\newcommand{\IND}{{\bf 1}}
\newcommand{\dd}{{\rm d}}
\DeclareMathOperator{\var}{Var}
\newcommand{\eps}{\varepsilon}
	\newcommand{\cA}{\ensuremath{\mathcal A}} 
	\newcommand{\cB}{\ensuremath{\mathcal B}}
	\newcommand{\cE}{\ensuremath{\mathcal E}} 
	\newcommand{\cF}{\ensuremath{\mathcal F}} 
	\newcommand{\cG}{\ensuremath{\mathcal G}}
	\newcommand{\cJ}{\ensuremath{\mathcal J}}
	\newcommand{\cM}{\ensuremath{\mathcal M}}
	\newcommand{\cQ}{\ensuremath{\mathcal Q}} 
	\newcommand{\cS}{\ensuremath{\mathcal S}}
	\newcommand{\cX}{\ensuremath{\mathcal X}} 
	\newcommand{\cY}{\ensuremath{\mathcal Y}}
	\newcommand{\bbE}{{\ensuremath{\mathbb E}} }
	\newcommand{\bbN}{{\ensuremath{\mathbb N}} }
	\def\({\left(}
	\def\){\right)}
	\newcommand{\fstop}{\; \text{.}}
	\newcommand{\comma}{\; \text{,}\;\;}
	\renewcommand{\complement}{c}
	 \subjclass[2020]{Primary 60K35; Secondary 82B20; 82C26}
	 \keywords{Mixing times of Markov chains; cutoff phenomenon; exchange models; averaging process} 
\author{Pietro Caputo, Matteo Quattropani, Federico Sau}
\address{Pietro Caputo\\ Universit\`{a} degli Studi di Roma Tre}
\email{pietro.caputo@uniroma3.it}
\address{Matteo Quattropani\\ Sapienza Universit\`{a} di Roma}
\email{matteo.quattropani@uniroma1.it}
\address{Federico Sau\\ Università degli Studi di Trieste}
\email{federico.sau@units.it}
\begin{document}
		\title[Repeated Block Averages]{Repeated Block Averages: \\ entropic time and mixing profiles}
		\maketitle
		\begin{abstract}
		We consider randomized dynamics over the $n$-simplex, where at each step a random set, or \emph{block}, of coordinates is evenly averaged. When all blocks have size 2, this reduces to the \emph{repeated averages} studied in \cite{chatterjee2020phase}, a version of the \emph{averaging process} on a graph \cite{aldous_lecture_2012}. We study the convergence to equilibrium of this process as a function of the distribution of the block size, and provide sharp conditions for the emergence of the \emph{cutoff phenomenon}.
Moreover, we characterize the size of the cutoff window and provide an explicit Gaussian cutoff profile. To complete the analysis, we study in detail the simplified case where the block size is not random. We show that the absence of a cutoff is equivalent to having blocks of size $n^{\Omega(1)}$, in which case we provide a convergence in distribution for the total variation distance at any given time, showing that, on the proper time scale, it remains constantly 1 up to an exponentially distributed random time, after which it decays following a Poissonian profile.
		\end{abstract}
	
		\setcounter{tocdepth}{1}
	\thispagestyle{empty}
			\section{Introduction}\label{sec:intro}
			The following process is commonly referred to as Repeated Averages. Given a probability vector $\eta=(\eta(1),\dots,\eta(n))$, one step of the dynamics consists in picking two distinct coordinates $x,y$ uniformly at random and replacing both $\eta(x)$ and $\eta(y)$ by $(\eta(x)+\eta(y))/2$. Iteration of this single step defines a discrete time, random dynamical system on the $n$-simplex, converging with high probability to the fixed point given by the uniform distribution $\pi(x)\equiv 1/n$.  Despite the simplicity of 		this process, a precise understanding of its convergence was obtained only very recently in \cite{chatterjee2020phase}, where the process was shown to exhibit a cutoff phenomenon at time $t = c\,n\log n$, with $c=1/2\log 2$, and with an explicit gaussian mixing profile in a  time window 
			of size $n\sqrt{\log n}$. We refer to \cite{chatterjee2020phase} for a precise description of the result and for more background.
			
			It is natural to ask what happens when the two--coordinate averaging at each step is replaced by a $k$-coordinate averaging, where $k\ge 2$ is an integer. We refer to this as the Repeated Block Average (or, simply, Block Average) process. While an adaptation of the techniques in   \cite{chatterjee2020phase} readily implies that, for fixed $k\in\bbN$, the analogous cutoff phenomenon takes place at time $t = c\,n\log n$ with $c = 1/k\log k$, cases where $k$ is itself a random variable and/or is allowed to grow with $n$ provide a richer phenomenology.

			In this paper we develop a robust approach to the Block Average process, which allows us to derive a complete description of the total variation distance from stationarity in such contexts, including sharp criteria for the occurrence of cutoff and explicit descriptions of the mixing profile within the cutoff window in terms of the convolution of two normal random variables.  Our approach is undoubtedly inspired by the methods introduced in \cite{chatterjee2020phase}, but our analysis departs from that work in several key places. In particular, we bypass the discretization technique, a crucial ingredient in the proof of the main results of \cite{chatterjee2020phase}, by viewing the Block Average process as the projection of a richer fragmentation process, which we call the {\em pile dynamics}, and which encodes the history of all averaging events.

			If the  block sizes at each step of the dynamics are i.i.d.\ copies of an integer  random variable $X\ge 2$, we show that whenever cutoff occurs, then it takes place at the {\em entropic time} 
			\begin{equation}\label{eq:t-ent0}
				t_{\rm ent} = \frac{n\log n}{\bbE[X\log X]}\fstop
			\end{equation}
			Intuitively,
			$t_{\rm ent}$ is the number of steps required by the Block Average process started at a Dirac mass to reach an entropy of order $\log n$.
			It can be seen 
			as the analogue of the entropic time  introduced in \cite{bordenave_cutoff2019} in the context of random walks on random graphs. In line with this perspective, we link the occurrence of cutoff to a Weak Law of Large Numbers for $\mu^{-1}\log n$ i.i.d.\ copies of $\log Y$, where $Y$ is the size-biased version of the block-size $X$, and $\mu=\bbE\big[\log Y\big]$ is its mean, 
			provided that $\mu^{-1}\log n\gg 1$. 
			We shall refer to this asymptotic inequality, which turns out to be necessary for cutoff to occur, as the \textit{entropic product condition} (Proposition \ref{pr:prod-cond} and Theorem \ref{th:cutoff}).
			
			We strengthen this result by determining the corresponding cutoff window and Gaussian profile in the case where  $\log Y$ additionally satisfies a Central Limit Theorem. Interestingly, we describe our profiles as arising from the interplay of two distinct sources of randomness. Roughly speaking, a source of Gaussian fluctuations arises from the randomness of the block sizes, while another stems from the random choice of the elements forming the block (Theorems \ref{th:cutoff-window} and \ref{th:profile1}).

	When the block size $X$ is deterministically equal to $k=k(n)$, we show that cutoff occurs if and only if $k=n^{o(1)}$. In other terms,  the product condition $\mu^{-1}\log n\gg 1$ 
	mentioned above becomes a necessary and sufficient condition for the occurrence of 
	cutoff in this case (Corollary \ref{cor:cutoff-deterministic}).
	On the other hand, when $k=n^{\Omega(1)}$, the total-variation distance does not necessarily concentrate. We study this case by a suitable multi-scale analysis for Block Averages which successfully captures its limiting Poisson-like profile (Theorem \ref{th:polynomial}).

			We end this introduction with some comments on related work.
			The two-coordinate average process analyzed in \cite{chatterjee2020phase} is the complete graph (or mean field) version of the so called Averaging Process on a graph $G=(V,E)$, defined as above with the difference that the two coordinates $x,y$ are now picked uniformly at random among the edges of $G$, that is, the unordered pairs $\{x,y\}\in E$. The systematic mathematical study of the Averaging Process on a graph has been initiated in \cite{aldous_lecture_2012}, where some general facts about convergence in $L^2$ and in relative entropy where obtained. Repeated averages along the edges of a graph represents a natural toy model for stochastic processes on graphs with a richer structure, with remarkable applications to opinion dynamics, consensus finding, gossip algorithms, and even quantum computing, see \cite{chatterjee2020phase,movassagh_repeated2022} for further motivation and background. We also refer to \cite{movassagh_repeated2022}, \cite{quattropani2021mixing}, \cite{caputo_quattropani_sau_cutoff_2023}, and \cite{sau_concentration_2024} for further general results and for some mixing analysis on certain specific families of graphs. 
			The Averaging process can also be studied on hypergraphs, so that a single step now consists in a full average of all the masses on the vertices in a given hyperedge.  This point of view was taken in \cite{spiro_averaging_2022}, where the $L^2$ convergence was analyzed. We note that the Block Average process introduced above can be seen as a mean field version of the  Averaging Process on a hypergraph. Convergence in relative entropy for  Block Averages can be deduced from the estimates in \cite{bristiel_caputo_entropy_2021}.

			\section{Model and main results}\label{sec:results}			
			\subsection{Block Average process}
			Fix an  integer $n\ge 2$, and write $V=[n]=\{1,\dots,n\}$. Let $\Delta$ denote the set of probability vectors $\eta=(\eta(x))_{x\in V}$.  We view $\eta$ as a mass distribution over the vertex set $V$. Given $A\subset V$, $\eta\in\Delta$, we write $\varPsi_A\eta\in\Delta$ for the mass distribution obtained after averaging in $A$,
			\begin{equation}
				x\in V\longmapsto 	
				\varPsi_A\eta(x)= \begin{cases}\textstyle{\frac1{|A|}\sum_{y\in A}\eta(y)} &\text{if}\ x\in A\\
					\eta(x) &\text{else}\fstop		
				\end{cases}
			\end{equation}
			Given a random variable $X$ with values in $\{2,\dots,n\}$, let $X_1,X_2,\dots$ denote i.i.d.\  copies of $X$ and let $A_1,A_2,\dots$ denote independent choices of uniformly random subsets such that $A_t\subset V$ has size $|A_t|=X_t$.   The $X$-Block Average process is defined as the discrete time evolution $\eta_1,\eta_2,\dots$ given by
			\begin{equation}\label{def:XBA}
				\eta_t = 	\varPsi_{A_t}\varPsi_{A_{t-1}}\cdots \varPsi_{A_1}\eta\fstop
			\end{equation}
			We consider the total variation distance from stationarity
			\begin{equation}d_{\rm TV}(t) =
				\|\eta_t-\pi\|_{\rm TV}\comma 
			\end{equation}
			where $\|v\|_{\rm TV}=\frac12\sum_{x\in V}|v(x)|$, and $\pi$ denotes the uniform distribution $\pi(x)\equiv 1/n$. It is not hard to check that for any initial mass distribution $\eta\in \Delta$, the sequence of random variables $d_{\rm TV}(t)$, $t\in\bbN$, is almost surely non-increasing and converging to $0$, and that the worst initial condition is obtained by placing all the mass in a single site $x_0\in V$; see,  e.g., \cite{caputo_quattropani_sau_cutoff_2023}. From now on, unless otherwise stated, we assume that the initial state is such a Dirac mass. 
			
			Our goal is to quantify this convergence in the limit $n\to \infty$.  
			To this end, we define the \emph{$\varepsilon$-mixing time} of the process as
			\begin{equation}
				t_{\rm mix}(\varepsilon) = 
				\inf\{t\in\bbN:\;  
				\E\left[d_{\rm TV}(t)\right]<\varepsilon \}\comma
			\end{equation}
			where $\E$ denotes expectation over the Block Average process. 
			The process is said to exhibit the {\em cutoff phenomenon} if the mixing time is, to leading order, insensitive of the value of $\varepsilon$, that is, if 
			\begin{equation}\label{eq:cutoff-def}
				t_{\rm mix}(\varepsilon)\sim t_{\rm mix}(1-\varepsilon)
				\comma\qquad \varepsilon\in(0,1)\comma
			\end{equation}
			where, for two positive sequences $a=a(n), b=b(n)$, we write $a\sim b$ for $a(n)/b(n) \to 1$.
			We shall also adopt the standard asymptotic notation $a \ll b$ for $a(n)/b(n) \to 0$, and $a \asymp b$ if $a(n)/b(n)$ and $b(n)/a(n)$ are both bounded.  Similarly, we write $a\lesssim b$ if $a(n)/b(n)$ stays bounded.
			
			\subsection{Entropic time and other timescales}\label{sec:times} We shall identify the entropic time \eqref{eq:t-ent0} as the relevant mixing timescale for  Block Averages, regardless of the block size distribution, see Theorem \ref{pr:tent-tmix}.
			Before introducing this result, we review and introduce some other relevant timescales for the process.

			As in the usual Averaging Process \cite{aldous_lecture_2012}, a simple duality relates the mass distribution $\eta_t$ to the corresponding  random walk.
			In our case, a direct computation, see Appendix \ref{app:duality}, shows that if  the process is started at the Dirac mass at $x_0\in V$, then for all $t\in\bbN$ and $x\in V$, 
			\begin{equation}\label{eq:duality}
				\E[\eta_t(x)] = P_{\rm RW}^t(x_0,x)\comma
			\end{equation}
			where $P_{\rm RW}$ is the transition matrix of the random walk on $V$ obtained as follows. If the current state is $x$, the next state $y$ is obtained by first sampling an integer $k$ distributed as the random variable $X$, then a uniformly random subset $A\subset V$ with size $k$, and finally taking $y\in A$ uniformly at random if $x\in A$, otherwise $y=x$ if $x\notin A$.  
			It is straightforward  to check that $P_{\rm RW}$ has one eigenvalue $1$, while all remaining ones are equal to $1-\frac{\bbE[X]-1}{n-1}$. In particular, the inverse of the spectral gap, 
			referred to as the {\em relaxation time} is given by (see Appendix \ref{app:duality}) 
			\begin{equation}\label{eq:t-rel}
				t_{\rm rel}= 
				\frac{n-1}{\E[X]-1}\fstop
			\end{equation} 
			Observe that $t_{\rm rel}\in [1,n-1]$, and the extremes $1$, and $n-1$ are reached only in case $X$ is deterministically equal to $n$ and $2$, respectively. The relaxation time controls convergence to  stationarity of the Block Average process in $L^2$ norm, see \cite{aldous_lecture_2012}
			for the usual averaging process on graphs, and see \cite{spiro_averaging_2022} for the hypergraph version. In our setup, this can be formulated as follows. 
			\begin{proposition}
				\label{prop:L2}
				For any initial distribution $\eta\in\Delta$, the $X$-Block Average process satisfies 
				\begin{equation}
					\E\bigg[\bigg\|\frac{\eta_t}{\pi}-1\bigg\|_2^2\bigg]  = \bigg(1-\frac1{t_{\rm rel}}\bigg)^t\, \bigg\|\frac{\eta}{\pi}-1\bigg\|_2^2
					 \comma\qquad t \in\bbN\comma
				\end{equation}
				where $\|v\|_2:= (\sum_{x\in V}\pi(x)|v(x)|^2)^{\frac12}$ for all $v\in \R^V$.
			\end{proposition}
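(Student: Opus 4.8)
The plan is to prove the identity by a one-step recursion, exploiting that each averaging operator $\varPsi_A$ is an orthogonal projection in $L^2(\pi)$ — here it is crucial that $\pi\equiv 1/n$ — and that its average over a uniformly random block of random size is exactly the random-walk kernel $P_{\rm RW}$ from \eqref{eq:duality}, whose spectrum on mean-zero functions is constantly $1-1/t_{\rm rel}$, as recalled before \eqref{eq:t-rel}.

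First I would set $f_t:=\eta_t/\pi=n\,\eta_t$, so that $f_t=\varPsi_{A_t}f_{t-1}$ by linearity, and, since $\varPsi_{A_t}\mathbf 1=\mathbf 1$, the centered functions $g_t:=f_t-\mathbf 1$ obey $g_t=\varPsi_{A_t}g_{t-1}$; each $g_t$ has zero $\pi$-average because $\sum_x\eta_t(x)=1$. Viewing $\varPsi_A$ as a matrix, it is symmetric and idempotent, hence an orthogonal projection for the inner product $\skp{u}{v}:=\sum_{x\in V}\pi(x)u(x)v(x)$ attached to $\|\cdot\|_2$, so that $\|\varPsi_A g\|_2^2=\skp{g}{\varPsi_A g}$ for every $g\in\R^V$. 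Conditioning on $\eta_{t-1}$ and averaging over the block $(X_t,A_t)$, and using that the $(x,y)$-entry of $\varPsi_A$ equals $\tfrac1{|A|}\IND_{\{x,y\in A\}}+\IND_{\{x\notin A,\,y=x\}}$ — whose average over $k\sim X$ and $A$ a uniform $k$-subset of $V$ is precisely $P_{\rm RW}(x,y)$, by the description of $P_{\rm RW}$ given after \eqref{eq:duality} — yields $\E\big[\|g_t\|_2^2\mid\eta_{t-1}\big]=\skp{g_{t-1}}{P_{\rm RW}g_{t-1}}$. Since $P_{\rm RW}$ is symmetric with eigenvalue $1$ on $\mathrm{span}(\mathbf 1)$ and all remaining eigenvalues equal to $1-\tfrac{\E[X]-1}{n-1}=1-1/t_{\rm rel}$, and $g_{t-1}$ is orthogonal to $\mathbf 1$, the right-hand side equals $(1-1/t_{\rm rel})\,\|g_{t-1}\|_2^2$. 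Taking a further expectation and iterating down to $t=0$, where $f_0=\eta/\pi$ is deterministic, gives the asserted identity.

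There is essentially no analytic obstacle, since the claim is an exact identity; the only points requiring care are checking that $\varPsi_A$ is a genuine orthogonal projection in $L^2(\pi)$ (this fails for non-uniform reference measures, where $\varPsi_A$ is no longer self-adjoint) and the matrix identification $\E[\varPsi_{A_t}]=P_{\rm RW}$. As an alternative that avoids any spectral input, one may expand $\|\eta_t/\pi-1\|_2^2=n\big(\sum_x\eta_t(x)^2-\tfrac1n\big)$ and compute $\E\big[\sum_x\eta_t(x)^2\mid\eta_{t-1},\,X_t=k\big]$ directly from $\Prob(y\in A_t\mid X_t=k)=k/n$ and $\Prob(\{y,z\}\subset A_t\mid X_t=k)=\tfrac{k(k-1)}{n(n-1)}$; this produces the per-step contraction factor $1-\tfrac{k-1}{n-1}$, and averaging over $k\sim X$ recovers $1-1/t_{\rm rel}$.
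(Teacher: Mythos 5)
Your argument is correct, but it takes a genuinely different route from the paper's. The paper, in its Appendix, proves the contraction by a direct combinatorial computation: it fixes a block $B$, writes $\|\varPsi_B\eta/\pi\|_2^2-\|\eta/\pi\|_2^2=\frac{n}{|B|}\sum_{x,y\in B}\eta(x)(\eta(y)-\eta(x))$, then averages this explicit expression over the random block size $k$ and the $\binom{n}{k}$ blocks to extract the factor $-\frac{\E[X]-1}{n-1}=-1/t_{\rm rel}$. You instead package the whole computation into two structural facts: (a) each $\varPsi_A$ is a $\pi$-self-adjoint idempotent (hence an orthogonal projection in $L^2(\pi)$, which indeed uses that $\pi$ is uniform), so $\|\varPsi_A g\|_2^2=\skp{g}{\varPsi_A g}$; and (b) $\E[\varPsi_{A_t}]=P_{\rm RW}$, whose nontrivial spectrum the paper has already identified to be constantly $1-1/t_{\rm rel}$. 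Conditioning then immediately gives the one-step contraction on the orthogonal complement of $\mathbf 1$. The two proofs are essentially equal in length, but yours makes the mechanism transparent (projection plus spectral gap of the mean operator) and in particular shows that the \emph{exact} — not merely upper-bounded — contraction factor is forced by the fact that $P_{\rm RW}$ has a flat spectrum on $\mathbf 1^\perp$; the paper's approach is more elementary and self-contained, as it does not need to invoke the eigenvalue computation for $P_{\rm RW}$. Your closing remark correctly identifies the paper's style of argument as the "alternative that avoids spectral input."
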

			For the reader's convenience, we record the proof of this estimate in the Appendix \ref{app:duality}.
			Using Schwarz' inequality and the fact that $\|\frac{\eta}{\pi}-1\|_2^2\le n-1$, Proposition \ref{prop:L2} implies a preliminary upper bound on mixing times:
			\begin{equation}\label{l1l2bound}
				t_{\rm mix}(\varepsilon)\le 	t_{\rm rel}\,\big(\log n - 2\log \eps\big)\ ,\qquad \varepsilon \in (0,1)\ .
			\end{equation}

			An alternative upper bound can be obtained by using relative entropy. Let $t_{\rm ent}$ denote the entropic time defined in \eqref{eq:t-ent0}, and let $D(\eta\|\pi)$ denote the relative entropy (or KL divergence) of $\eta\in\Delta$ with respect to $\pi$. The following is a direct consequence of  \cite[Theorem 1.1]{bristiel_caputo_entropy_2021}.
			\begin{proposition}\label{pr:BC}
			For all $t\in\bbN$, all initial $\eta\in\Delta$, 
			\begin{equation}
				\E[D(\eta_t\|\pi)]\le \bigg(1-\frac1{t_{\rm ent}}\bigg)^tD(\eta\|\pi)\fstop
			\end{equation}
			\end{proposition}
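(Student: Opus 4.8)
The plan is to deduce the proposition from a one-step entropy contraction and then iterate. First I would establish that for every $\eta\in\Delta$,
\begin{equation}\label{eq:prop-bc-onestep}
\E\big[D(\varPsi_A\eta\,\|\,\pi)\big]\ \le\ \Big(1-\tfrac1{t_{\rm ent}}\Big)D(\eta\|\pi)\comma
\end{equation}
where $A\subset V$ is the random block used in one step of the process, that is, $|A|$ has the law of $X$ and, conditionally on $|A|=k$, the set $A$ is uniform among the $k$-subsets of $V$. Granting \eqref{eq:prop-bc-onestep}, the proposition follows by conditioning: with $\cF_t=\sigma(A_1,\dots,A_t)$, and since $\eta_{t+1}=\varPsi_{A_{t+1}}\eta_t$ with $A_{t+1}$ independent of $\cF_t$ and distributed as $A$, applying \eqref{eq:prop-bc-onestep} to the random vector $\eta_t$ gives $\E[D(\eta_{t+1}\|\pi)\mid\cF_t]\le(1-1/t_{\rm ent})D(\eta_t\|\pi)$, and taking expectations and inducting on $t$ closes the argument.

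For \eqref{eq:prop-bc-onestep} I would start from the elementary entropy-production identity for a single averaging step. Writing $m_A=\sum_{y\in A}\eta(y)$ for the mass of the block, $q_A(y):=\eta(y)/m_A$ for $y\in A$, and $u_A$ for the uniform law on $A$, and using $\pi\equiv1/n$ together with the fact that $\varPsi_A$ leaves the coordinates outside $A$ untouched, a direct computation gives $D(\eta\|\pi)-D(\varPsi_A\eta\|\pi)=m_A\,D(q_A\,\|\,u_A)\ge0$: the per-step entropy drop equals the block mass times the within-block relative entropy of the conditioned profile relative to uniform. Taking the expectation over $A$, using $\E_{|A|=k}[m_A]=k/n$ and $\E_{|A|=k}[\sum_{y\in A}\eta(y)\log\eta(y)]=\tfrac kn\sum_{y\in V}\eta(y)\log\eta(y)$ (expectations over a uniformly random $k$-subset) together with $D(\eta\|\pi)=\log n+\sum_{y\in V}\eta(y)\log\eta(y)$, and finally averaging over $|A|=k\sim X$ while recalling $1/t_{\rm ent}=\E[X\log X]/(n\log n)$, one sees that \eqref{eq:prop-bc-onestep} reduces to the family of sharp estimates
\begin{equation}\label{eq:prop-bc-sharp}
\E_{|A|=k}\big[\,m_A\log m_A\,\big]\ \le\ \frac{k\log(n/k)}{n\log n}\sum_{y\in V}\eta(y)\log\eta(y)\comma\qquad k\in\{2,\dots,n\}\fstop
\end{equation}

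The estimate \eqref{eq:prop-bc-sharp} --- equivalently, the statement that the entropic contraction coefficient of one mean-field block-averaging step equals \emph{exactly} $1/t_{\rm ent}$ --- is precisely the content of \cite[Theorem 1.1]{bristiel_caputo_entropy_2021}, specialised to the complete hypergraph on $V$ with hyperedge weights proportional to $\Prob(X=|H|)/\binom{n}{|H|}$; I would simply invoke that result. I expect this to be the only genuinely delicate step. Indeed, the soft route --- bounding $\E_{|A|=k}[m_A\log m_A]$ from above using only the concavity of the logarithm --- brings in the $\chi^2$-divergence $\|\eta/\pi-1\|_2^2$ rather than $D(\eta\|\pi)$, and since the former can be far larger than the latter (of order $n$ for Dirac initial data) it does not even yield a positive per-step entropy drop; thus \eqref{eq:prop-bc-sharp} lies genuinely beyond the $L^2$ considerations behind Proposition \ref{prop:L2} and requires the finer analysis of \cite{bristiel_caputo_entropy_2021}. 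Everything else --- the entropy-production identity, the two combinatorial expectations, and the iteration over $t$ --- is routine.
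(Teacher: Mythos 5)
Your proposal is correct and follows essentially the same route as the paper: both iterate a one-step entropy contraction over $t$, and both obtain that one-step inequality by invoking \cite[Theorem 1.1]{bristiel_caputo_entropy_2021}. The extra work you do---the entropy-production identity $D(\eta\|\pi)-D(\varPsi_A\eta\|\pi)=m_A\,D(q_A\|u_A)$ and the combinatorial reduction to a bound on $\E_{|A|=k}[m_A\log m_A]$---is a correct but more explicit unwinding of what the paper dispatches as an ``equivalent formulation'' of that same theorem, so the key external input is identical.
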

			\begin{proof}
			An equivalent formulation of \cite[Theorem 1.1]{bristiel_caputo_entropy_2021} states that if  $A_1$ is a random subset as in \eqref{def:XBA}, then, for all
			$\eta\in\Delta$,
			\begin{equation}
			\E[D(\varPsi_{A_1}\eta\|\pi)]\le (1-\kappa) D(\eta\|\pi)\comma\qquad \kappa = \frac1{t_{\rm ent}}\fstop	
			\end{equation}
			Iteration of the above estimate yields the conclusion.  
			\end{proof}
			Since $D(\eta\|\pi)\le \log n$,  Pinsker inequality and Proposition \ref{pr:BC} imply the upper bound 
			\begin{equation}\label{ent_bound}
			t_{\rm mix}(\varepsilon)\le 2	t_{\rm ent}\,\big(\log \log n - \log (\sqrt 2\eps)\big)\ ,\qquad \varepsilon \in (0,1)\ .
			\end{equation}
			\begin{remark}
			The entropic time and the relaxation time always satisfy
			\begin{equation}
				1\le	\frac{t_{\rm ent}}{t_{\rm rel}}\le \, \frac{n}{n-1}\frac{\E[X]-1}{\E[X]}
				\frac{\log n}{\log \bbE[X]}\ .
			\end{equation}
			Indeed, the first inequality follows by integrating $(X-1)\, n\log n-(n-1)\, X\log X\ge 0$, while the second is
			Jensen inequality $\E[X\log X]\ge \E[X]\log \E[X]$.
			\end{remark}

			Following \cite{movassagh_repeated2022}, relative entropy may be used also to obtain a preliminary lower bound. 
			\begin{proposition}\label{pr:movas}
			For all $t\in\bbN$, all initial $\eta\in\Delta$, 
			\begin{equation}
				\E[D(\eta_t\|\pi)]\ge D(\eta\|\pi)-\frac{t\,\log n}{ t_{\rm ent}}\fstop
			\end{equation}
			\end{proposition}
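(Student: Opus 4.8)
The plan is to run the telescoping argument behind Proposition~\ref{pr:BC} ``in reverse'': instead of contracting the relative entropy, we bound the amount by which a single averaging step can \emph{decrease} it. Since $\pi(x)\equiv 1/n$ is everywhere positive, $D(\eta_t\|\pi)\in[0,\log n]$ is finite for every $t$, and the assertion is equivalent to the one-step estimate
\begin{equation}\label{eq:movas-1step}
D(\nu\|\pi)-\E\big[D(\varPsi_A\nu\|\pi)\big]\;\le\;\frac{\log n}{t_{\rm ent}}\;=\;\frac{\E[X\log X]}{n}
\end{equation}
for every $\nu\in\Delta$ and every random block $A$ distributed as $A_1$ in \eqref{def:XBA}.

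The core is an elementary pointwise computation. Let $m_A:=\sum_{y\in A}\nu(y)$ denote the $\nu$-mass of $A$. Only the coordinates in $A$ change under $\varPsi_A$, each becoming $m_A/|A|$, and expanding the two relative entropies (the $\log n$ terms cancel) gives
\begin{equation}\label{eq:movas-ptwise}
\begin{aligned}
D(\nu\|\pi)-D(\varPsi_A\nu\|\pi)
&=\sum_{y\in A}\nu(y)\log\nu(y)-m_A\log\frac{m_A}{|A|}\\
&\le m_A\log m_A-m_A\log\frac{m_A}{|A|}=m_A\log|A|\fstop
\end{aligned}
\end{equation}
where the inequality uses $\nu(y)\le m_A$ for all $y\in A$, so that $\sum_{y\in A}\nu(y)\log\nu(y)\le m_A\log m_A$. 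Taking expectations over $A$: conditionally on $|A|=k$ the block is a uniformly random $k$-subset of $V$, whence $\Prob(x\in A)=k/n$ for each $x$ and $\E[m_A\mid|A|=k]=\sum_{x\in V}\nu(x)\,\Prob(x\in A)=k/n$; plugging this into \eqref{eq:movas-ptwise} gives $D(\nu\|\pi)-\E[D(\varPsi_A\nu\|\pi)\mid|A|=k]\le\frac{k}{n}\log k$, and averaging over $k$ distributed as $X$ yields \eqref{eq:movas-1step}.

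Finally we telescope along the dynamics. With $\cF_{s-1}:=\sigma(A_1,\dots,A_{s-1})$, the mass $\eta_{s-1}$ is $\cF_{s-1}$-measurable and $A_s$ is independent of $\cF_{s-1}$, so applying \eqref{eq:movas-1step} conditionally with $\nu=\eta_{s-1}$ gives $D(\eta_{s-1}\|\pi)-\E[D(\eta_s\|\pi)\mid\cF_{s-1}]\le\log n/t_{\rm ent}$ almost surely, hence $\E[D(\eta_{s-1}\|\pi)]-\E[D(\eta_s\|\pi)]\le\log n/t_{\rm ent}$. Summing over $s=1,\dots,t$ telescopes (recall $\eta_0=\eta$) to $D(\eta\|\pi)-\E[D(\eta_t\|\pi)]\le t\log n/t_{\rm ent}$, which is the claim.

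No genuine obstacle arises; the only point needing a little care is \eqref{eq:movas-1step}, which is the exact counterpart of the entropy-contraction inequality of \cite{bristiel_caputo_entropy_2021} used in Proposition~\ref{pr:BC}: there the per-step entropy production is at least $D(\nu\|\pi)/t_{\rm ent}$, whereas here it is at most $(\log n)/t_{\rm ent}$, the common rate $1/t_{\rm ent}$ arising in both cases from $\E[X\log X]/n$.
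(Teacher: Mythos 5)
Your proof is correct and is essentially the paper's argument: the paper writes the one-step drop as $D(\nu\|\pi)-D(\varPsi_B\nu\|\pi)=\nu(B)\,{\rm ent}_B(\nu)$ and invokes ${\rm ent}_B(\nu)\le\log|B|$, which is exactly what you obtain directly from $\sum_{y\in B}\nu(y)\log\nu(y)\le\nu(B)\log\nu(B)$, and the averaging over blocks (via $\E[\nu(A)\mid |A|=k]=k/n$) and the telescoping are identical. The only cosmetic difference is that you prove the maximal-entropy bound ${\rm ent}_B(\nu)\le\log|B|$ from scratch rather than quoting it.
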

			The proof of   Proposition \ref{pr:movas} is a simple adaptation of the argument in \cite{movassagh_repeated2022}, and its proof is reported in Appendix \ref{app:duality}.
			When $\eta$ is a Dirac mass one has $D(\eta\|\pi)=\log n$, so using a reversed Pinsker inequality, Proposition \ref{pr:movas} yields a lower bound on mixing times:
			\begin{equation}\label{l1entbound}
			t_{\rm mix}(\varepsilon)\ge 	\left(1-\frac{\varepsilon \, n}{n-1}\right)t_{\rm ent} 
			\comma\qquad \varepsilon \in (0,1)\fstop
			\end{equation}

			Our first result shows that $t_{\rm ent}$ is the right   timescale for the decay of the random sequence $d_{\rm TV}(t)$.
			In particular, the 
			lower bound \eqref{l1entbound} is rather sharp.

			\begin{theorem}
			\label{pr:tent-tmix}
			For any block size distribution, 
	$t_{\rm mix}(\eps)\asymp t_{\rm ent}$,  for all $\eps\in (0,1)$.
			\end{theorem}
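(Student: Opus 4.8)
The lower bound is already available: for fixed $\eps\in(0,1)$, estimate \eqref{l1entbound} gives $t_{\rm mix}(\eps)\ge\big(1-\tfrac{\eps n}{n-1}\big)t_{\rm ent}$, and since the prefactor tends to $1-\eps>0$ this yields $t_{\rm mix}(\eps)\gtrsim t_{\rm ent}$. So the whole content of the statement is the matching upper bound: for each $\eps\in(0,1)$ one wants a constant $C_\eps$ with $t_{\rm mix}(\eps)\le C_\eps\,t_{\rm ent}$ for all large $n$. It is worth stressing at the outset that neither soft bound suffices — \eqref{l1l2bound} loses a factor $\asymp\log n/\log\E[X]$ (as one already sees for $X\equiv n^{c}$), while \eqref{ent_bound} loses a factor $\asymp\log\log n$, and since the relative-entropy contraction rate in Proposition \ref{pr:BC} is exactly $t_{\rm ent}^{-1}$ no functional-inequality argument can remove this last factor — so a genuinely pathwise analysis of the dynamics is needed.

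The plan for the upper bound has three steps. First I would set up the dual/``pile'' representation: unrolling $\eta_s(v)=\tfrac1{|A_s|}\sum_{w\in A_s}\eta_{s-1}(w)$ along the last times at which $v$ lies in a block shows, when the initial datum is $\delta_{x_0}$, that $\eta_t(v)$ equals the probability that a single backward walk — started at $v$ at time $t$ and, at each step $s$ with current position in $A_s$, jumping to a uniform site of $A_s$ — occupies $x_0$ at time $0$; equivalently, viewing the unit mass as a cloud of infinitesimal elements reshuffled uniformly inside $A_s$ at every averaging (the pile dynamics alluded to in the introduction), $\eta_t(v)$ is the local density of the cloud at $v$. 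Second, with $T_1=C_\eps t_{\rm ent}$, I would prove that $\|\eta_{T_1}\|_\infty\le M_\eps/n$ with probability at least $1-\eps$, for a constant $M_\eps$ independent of $n$. The intuition is that a tagged element is reshuffled about once per $t_{\rm rel}$ steps, hence $\asymp C_\eps\,t_{\rm ent}/t_{\rm rel}\asymp C_\eps\log n/\mu$ times by time $T_1$, and that the block it lands in then has size distributed — by the size-biasing inherent in ``being caught by a block'' — like an independent copy of $Y$; so after $m$ reshufflings the local density is of order $\exp\!\big(-\sum_{i\le m}\log Y_i\big)$, which for $m\asymp C_\eps\log n/\mu$ is $\le M_\eps/n$ with overwhelming probability, even after a union bound over the $n$ vertices. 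Making this rigorous uses concentration of the (binomial-type) number of reshufflings together with the \emph{one-sided} estimate that $\sum_{i\le m}\log Y_i$ is large — robust, and requiring only $\mu^{-1}\log n\to\infty$; when instead $\mu\asymp\log n$ one has $t_{\rm ent}\asymp t_{\rm rel}$ and a single reshuffling already spreads an element over $n^{\Omega(1)}$ vertices, which I would treat the same way. The remaining $\eps$-fraction of mass — carried by elements reshuffled too few times, or only inside atypically small blocks — is simply discarded, at a cost $\eps$ in total variation.

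Third, I would finish with an $L^2$ cleanup. On the event $\cG=\{\|\eta_{T_1}\|_\infty\le M_\eps/n\}$ one has $\|\eta_{T_1}/\pi-1\|_2^2=n\sum_v\eta_{T_1}(v)^2-1\le M_\eps$, so by Proposition \ref{prop:L2} applied conditionally on the first $T_1$ steps (from the random datum $\eta_{T_1}$), after $T_2=t_{\rm rel}\log(M_\eps/\eps^2)$ further steps $\E\big[\|\eta_{T_1+T_2}/\pi-1\|_2^2\,\big|\,\cF_{T_1}\big]\le\eps^2$ on $\cG$; then $\|\cdot\|_1\le\|\cdot\|_2$ in $L^2(\pi)$ yields $\E[d_{\rm TV}(T_1+T_2)]\le\tfrac12\eps+\Prob(\cG^{\complement})\le\tfrac32\eps$, while $T_1+T_2=C_\eps t_{\rm ent}+O_\eps(t_{\rm rel})\asymp t_{\rm ent}$ since $t_{\rm rel}\le t_{\rm ent}$. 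Relabelling $\eps$ gives $t_{\rm mix}(\eps)\lesssim t_{\rm ent}$, which together with the lower bound proves the theorem.

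The hard part will be the second step. The dilution estimate must be uniform over all $n$ vertices and valid for an \emph{arbitrary} law of $X$ on $\{2,\dots,n\}$ — heavy-tailed sizes, sizes $k=k(n)$ growing slowly with $n$, and blocks comparable to $n$ all included — and the delicate point is not that most vertices are well diluted (a first-moment bound handles that), but that the total-variation mass carried by the \emph{badly} diluted vertices is negligible. Capturing this requires the actual bookkeeping of the pile/fragmentation dynamics: tracking, for the mass sitting at each vertex, the depth of the genealogy along which it descends from $x_0$ and the sizes of the blocks met along that genealogy. This is also the only place where the block-size distribution enters in an essential way; by comparison the dual representation, the $L^2$ cleanup, and the lower bound are routine.
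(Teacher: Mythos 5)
Your lower bound (via \eqref{l1entbound}), Step 1 (the pile/dual representation), and Step 3 (the $L^2$ cleanup via Proposition \ref{prop:L2}) are all sound and parallel the paper: Section \ref{sec:piles-general} sets up exactly the pile dynamics you describe, and Lemma \ref{lemma:triangle-ineq-ub} packages the same ``discard, then contract in $L^2$'' idea. The difficulty, as you yourself anticipate, is concentrated in Step 2, and there your argument has a real gap that the paper resolves by a different and much weaker statement.

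Step 2 asserts that $\Prob\big(\|\eta_{T_1}\|_\infty> M_\eps/n\big)\le\eps$. The justification offered --- ``after $m$ reshufflings the local density is of order $\exp(-\sum_{i\le m}\log Y_i)$'', which is $\le M_\eps/n$ ``with overwhelming probability, even after a union bound over the $n$ vertices'' --- conflates two distinct quantities. The product $\prod_i Y_i^{-1}$ is the size $|\zeta_t(U)|$ of a \emph{single} tagged pile, as in Lemma \ref{lemma:size-pile}; the local density $\eta_t(v)$ is the \emph{sum} of the sizes of \emph{all} piles currently at $v$. Distinct piles descending from $x_0$ can collide at the same site and add up to far more than any individual pile size, so even if every pile is small, $\|\eta_t\|_\infty$ can be large; nothing in your argument controls this aggregation. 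Furthermore, a union bound over $n$ sites would require a per-site failure probability $\lesssim\eps/n$, i.e.\ an exponential-in-$\log n$ lower tail for $\sum\log Y_i$; for an arbitrary law of $X$ on $\{2,\dots,n\}$ --- possibly heavy-tailed $\log Y$, possibly $\mu\asymp\log n$ so that only $O(1)$ effective reshufflings occur by time $C_\eps t_{\rm ent}$ --- such a tail bound is far from automatic, and you give no argument for it. Your final paragraph correctly identifies the collision bookkeeping as ``the hard part'', but it is exactly what is missing.

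The paper never proves (or needs) an $L^\infty$ bound. Having shown via Lemma \ref{lemma:size-pile} that most of the mass sits in piles of size $<1/n$ (defining $\eta_t^*$), it truncates further to the sites $x$ with $\eta_t^*(x)\le e^{cw}/n$ and controls only the \emph{expected} mass carried by the over-dense sites. The key input is the two-chunk estimate of Lemma \ref{lemma:pile-2}: the probability that two independently evolving chunks sit at the same site, both in piles of size $<\theta$, is $\le\theta+1/n$. Combined with the identity \eqref{eq:super-general} this gives $\E[\eta_t^*(B)]\le 2e^{-cw}$ in a few lines (see \eqref{eq:fill-the-dots}); Lemma \ref{lemma:triangle-ineq-ub} is then enough to close the argument. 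No per-vertex concentration, no union bound, no dependence on the tail of $\log Y$. To salvage your $L^\infty$ route one would essentially have to prove a high-moment bound on $\eta_t(v)$ uniform in $v$ --- the paper does something like this in Lemma \ref{lemma:high-mom}, but only in the deterministic polynomial regime, and even there it is delicate; for arbitrary block-size laws it is not clear such a bound holds with constants independent of the distribution.
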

		
		\subsection{Cutoff phenomenon}
		We now turn our attention to 
		the occurrence of  cutoff, as well as to the identification of the cutoff window and cutoff profile.
		
		We start with a Block Average version of 
			the well-known \emph{product condition} for reversible Markov chains, stating that if the relaxation time is not much smaller than the mixing time then there is no cutoff, see, e.g., \cite[Section 18.3]{levin2017markov}. 
	\begin{proposition}[Entropic product condition]\label{pr:prod-cond}
				Cutoff as in \eqref{eq:cutoff-def} implies $t_{\rm ent}\gg t_{\rm rel}$.
			\end{proposition}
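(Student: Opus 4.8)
The plan is to run the classical \emph{product condition} argument for reversible chains (see \cite[Section~18.3]{levin2017markov}), with the usual spectral lower bound on the distance replaced by a sharp one read off from the duality \eqref{eq:duality}, and with Theorem~\ref{pr:tent-tmix} supplying the matching upper timescale. As is customary when discussing cutoff, I work in the regime $t_{\rm mix}\to\infty$ --- which by Theorem~\ref{pr:tent-tmix} coincides with $t_{\rm ent}\to\infty$ --- this being the setting in which the statement is of interest; in the degenerate opposite regime (e.g.\ $X\equiv n$, where $\eta_1=\pi$ and $t_{\rm mix}(\eps)\equiv 1$) it is not claimed.

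First I would record an unconditional lower bound on $\E[d_{\rm TV}(t)]$. Taking expectations in $d_{\rm TV}(t)=\tfrac12\sum_{x}|\eta_t(x)-\tfrac1n|$ and using Jensen's inequality $\E|Z|\ge|\E Z|$ coordinatewise, together with the duality \eqref{eq:duality}, gives $\E[d_{\rm TV}(t)]\ge\|P_{\rm RW}^t(x_0,\cdot)-\pi\|_{\rm TV}$. From the description of $P_{\rm RW}$ in Section~\ref{sec:times} one has the explicit form $P_{\rm RW}=(1-\tfrac1{t_{\rm rel}})\Id+\tfrac1{t_{\rm rel}}\Pi$, with $\Pi$ the matrix whose entries are all $1/n$; since $\Id$ and $\Pi$ commute and $\Pi^2=\Pi$, this yields $P_{\rm RW}^t=(1-\tfrac1{t_{\rm rel}})^t\Id+(1-(1-\tfrac1{t_{\rm rel}})^t)\Pi$, hence the exact identity $\|P_{\rm RW}^t(x_0,\cdot)-\pi\|_{\rm TV}=(1-\tfrac1{t_{\rm rel}})^t(1-\tfrac1n)$. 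Combining with the elementary bound $-\log(1-u)\le u/(1-u)$, this gives the mixing lower bound
\begin{equation*}
t_{\rm mix}(\eps)\ge (t_{\rm rel}-1)\,\log\frac{1-1/n}{\eps},\qquad \eps\in(0,1).
\end{equation*}

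Then I would argue by contradiction. Suppose cutoff holds but $t_{\rm ent}\not\gg t_{\rm rel}$; then along some subsequence $t_{\rm ent}\le C_0\,t_{\rm rel}$ for a fixed $C_0$, and on it (using $t_{\rm ent}\to\infty$ and $t_{\rm rel}\le t_{\rm ent}$) also $t_{\rm rel}\to\infty$ and $t_{\rm mix}(1/4)\to\infty$. Fix $\eps\in(0,1/2)$. By monotonicity of $t_{\rm mix}(\cdot)$, $t_{\rm mix}(1-\eps)\le t_{\rm mix}(1/4)$; and by Theorem~\ref{pr:tent-tmix} there is a fixed constant $C=C(1/4)$ with $t_{\rm mix}(1/4)\le C\,t_{\rm ent}\le CC_0\,t_{\rm rel}$ along the subsequence, so $t_{\rm rel}-1\ge \tfrac1{CC_0}\,t_{\rm mix}(1/4)-1$. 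Dividing the displayed lower bound by $t_{\rm mix}(1/4)$ gives
\begin{equation*}
\frac{t_{\rm mix}(\eps)}{t_{\rm mix}(1-\eps)}\ge\frac{(t_{\rm rel}-1)\,\log\frac{1-1/n}{\eps}}{t_{\rm mix}(1/4)}\ge\Big(\tfrac1{CC_0}-\tfrac1{t_{\rm mix}(1/4)}\Big)\log\frac{1-1/n}{\eps},
\end{equation*}
whose right-hand side tends to $\tfrac1{CC_0}\log\tfrac1\eps$ along the subsequence (since $t_{\rm mix}(1/4)\to\infty$ and $1-1/n\to1$). Choosing $\eps$ small enough that $\tfrac1{CC_0}\log\tfrac1\eps>1$ forces $\liminf_n t_{\rm mix}(\eps)/t_{\rm mix}(1-\eps)>1$ along the subsequence, contradicting $t_{\rm mix}(\eps)\sim t_{\rm mix}(1-\eps)$.

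The one delicate point is that the argument genuinely needs the \emph{sharp} upper bound $t_{\rm mix}\lesssim t_{\rm ent}$ of Theorem~\ref{pr:tent-tmix}: the weaker entropy estimate \eqref{ent_bound} carries an extra $\log\log n$ factor, which would send the right-hand side of the last display to $0$ rather than to a positive constant, so the contradiction would not close. The remaining steps --- the identity for $P_{\rm RW}^t$, the inequality $-\log(1-u)\le u/(1-u)$, and keeping track of the standing assumption $t_{\rm ent}\to\infty$ --- are routine.
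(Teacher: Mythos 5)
Your proof is correct and follows essentially the same route as the paper's: combine the Jensen/duality lower bound $\E[d_{\rm TV}(t)]\ge\|P_{\rm RW}^t(x_0,\cdot)-\pi\|_{\rm TV}\gtrsim(1-1/t_{\rm rel})^t$ with the sharp upper bound $t_{\rm mix}(1/4)\lesssim t_{\rm ent}$ from Theorem~\ref{pr:tent-tmix} to show the ratio $t_{\rm mix}(\eps)/t_{\rm mix}(1-\eps)$ cannot tend to $1$ when $t_{\rm ent}\asymp t_{\rm rel}$ along a subsequence. The only cosmetic differences are that you derive the exact identity $\|P_{\rm RW}^t(x_0,\cdot)-\pi\|_{\rm TV}=(1-1/t_{\rm rel})^t(1-1/n)$ from the explicit form $P_{\rm RW}=(1-1/t_{\rm rel})\Id+(1/t_{\rm rel})\Pi$, whereas the paper invokes the general spectral lower bound $\tfrac12(1-1/t_{\rm rel})^t$ from Levin--Peres, and that you make explicit the standard standing assumption $t_{\rm mix}\to\infty$ (which the paper's phrase ``diverges to $\infty$ uniformly over $n$'' tacitly uses); your remark that the cruder entropy bound \eqref{ent_bound} would not suffice, precisely because of the extra $\log\log n$ factor, is accurate and is indeed why Theorem~\ref{pr:tent-tmix} is needed.
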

			A natural question is whether this entropic product condition is sufficient for cutoff in our context. In the usual Markov chain setting, this equivalence holds for specific classes of  Markov chains \cite{ding_lubetzky_peres_total_variation_2010,basu_hermon_peres_2017,salez2022universality}, but it is not true in general; see, e.g., \cite[Example 18.7]{levin2017markov}. 
Counterexamples to this equivalence for Block Averages, akin to the example in the aforementioned reference, are discussed in Section \ref{suse:alternative} below. Nevertheless, in Theorem \ref{th:cutoff}, we establish a sufficient condition for cutoff based on a strengthened entropic product condition.

			In order to state that, let us introduce one of the key quantities of our analysis.
			Let $Y$  be the size-biased version of $X$, so that the parameters $\mu=\E[\log Y]$ and $\sigma^2=\var(\log Y)$  
			satisfy 
			\begin{align}\label{eq:mu-sigma}
			\mu=\frac{\E[X\log X]}{\E[X]}\comma\qquad
			\sigma^2=\frac{\E[X(\log X)^2]}{\E[X]}-\mu^2\fstop
			\end{align}
			With this notation, the entropic time $t_{\rm ent}$ in \eqref{eq:t-ent0} reads 
			\begin{equation}\label{eq:t-ent}
				t_{\rm ent}= \frac{n\log n}{\E[X]\,\mu}\comma
			\end{equation}
			whereas the entropic product condition $t_{\rm ent}\gg t_{\rm rel}$ in Proposition \ref{pr:prod-cond}  becomes
				\begin{align}\label{eq:prod-condition2}
				\mu\ll\log(n)\fstop
			\end{align}
			Our next result shows that this condition is sufficient if combined with a  
			Weak Law of Large Numbers (WLLN) for the variables $\log Y$. We say that $\log Y$ satisfies the WLLN if 
				\begin{equation}\label{eq:WLLN}
				\lim_{n\to \infty}\Prob\bigg(\bigg|\frac1{\log n }{\sum_{s=1}^{\mu^{-1}\log n}\left(\log Y_s-\mu\right) }\bigg|>\delta \bigg)= 0\comma  \qquad  \delta >0\fstop
				\end{equation}

				\begin{theorem}[Cutoff]\label{th:cutoff}
				Assume the entropic product condition \eqref{eq:prod-condition2} and the WLLN  \eqref{eq:WLLN}. Then the Block Average process has  cutoff, 
				with $t_{\rm mix}(\eps)\sim t_{\rm ent}$,  $\eps \in (0,1)$.
			\end{theorem}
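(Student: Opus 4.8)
The plan is to establish cutoff at $t_{\rm ent}$ by combining the general lower bound \eqref{l1entbound} — which already gives $t_{\rm mix}(\eps)\gtrsim t_{\rm ent}$ — with a matching high-probability upper bound: we must show that for every $\eps>0$, at time $t=(1+o(1))\,t_{\rm ent}$ one has $d_{\rm TV}(t)\le\eps$ with high probability. The key device is the \emph{pile dynamics} announced in the introduction, which tracks the full genealogy of averaging operations. Concretely, starting from a Dirac mass at $x_0$, after $t$ averaging steps the mass distribution $\eta_t(x)$ can be written as a sum over ``piles'' — paths in the genealogical tree — and $\eta_t(x)=\sum_{P} 2^{-\ell(P)}\cdots$ (weights being products of reciprocals of the block sizes encountered); equivalently, $\eta_t$ is the projection of a measure on the leaves of a random fragmentation process. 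The $L^2$ bound (Proposition \ref{prop:L2}) shows $L^2$-mixing happens at $t_{\rm rel}\log n$, which by the entropic product condition \eqref{eq:prod-condition2} is $\gg t_{\rm ent}$, so $L^2$ is \emph{too slow} and cannot be used directly; instead one must exploit that by time $t_{\rm ent}$ the mass has already been ``spread thin'' onto $\approx n$ sites, and the residual discrepancy from uniform is controlled by an $L^1$/entropy argument rather than $L^2$.

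The main steps I would carry out are as follows. First, via the pile representation, identify a natural ``observable'' that detects non-mixedness: the logarithm of the mass, or equivalently the depth/weight statistics of a uniformly chosen pile. One shows that at step $t$, the quantity $-\log(n\,\eta_t(X^*))$ for $X^*$ sampled from $\eta_t$ is, up to negligible error, a sum of $\approx t\,\E[X]/n$ i.i.d.\ terms each distributed as $\log Y$ (the size-biased block size appearing because, conditioned on a site being in a freshly chosen block, the block size is size-biased), so its mean is $\approx (t\,\E[X]/n)\,\mu$. Second, choose $t=t_{\rm ent}=n\log n/(\E[X]\mu)$ so that this sum has $\mu^{-1}\log n$ terms and mean $\log n$: the WLLN hypothesis \eqref{eq:WLLN} then forces $-\log(n\,\eta_t(X^*))$ to concentrate around $\log n$, i.e.\ $\eta_t(X^*)$ concentrates around $1/n^{2}$ — wait, one must be careful; the correct reading is that the \emph{relative} density $n\,\eta_t$ has log concentrating around $0$ when measured against $\pi$-mass, which is exactly what makes $D(\eta_t\|\pi)=o(1)$ and hence, by Pinsker, $d_{\rm TV}(t)\to 0$. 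Third, upgrade ``in probability over the sampled pile'' to a genuine bound on $\E[d_{\rm TV}(t)]$: here one uses the monotonicity of $d_{\rm TV}$ in $t$ (so a bound at one good time suffices), together with a first/second-moment computation showing that the fraction of the mass sitting on sites where $n\eta_t$ is far from $1$ is small with high probability over the dynamics. Finally, combine with \eqref{l1entbound} to conclude \eqref{eq:cutoff-def} with $t_{\rm mix}(\eps)\sim t_{\rm ent}$.

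The hard part will be step three: controlling the \emph{deviation} of $d_{\rm TV}(t)$ around its typical value and, relatedly, handling the ``light'' sites — those $x$ that have been touched by very few averaging events and on which $\eta_t(x)$ may still be atypically large or (more dangerously) atypically small, since $d_{\rm TV}$ also picks up mass deficits $\sum_x(\pi(x)-\eta_t(x))_+$. The pile dynamics is designed precisely to make this tractable: because it records the whole history, the event that a given site is ``under-averaged'' corresponds to a controllable event in the fragmentation tree, and the number of such sites can be bounded by a direct counting/union-bound argument over genealogies, using that at time $\asymp t_{\rm ent}$ the total number of averaging operations involving any fixed site is Poisson-like with the right mean. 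A secondary subtlety is that \eqref{eq:WLLN} is only a WLLN, with no rate, so all concentration must be extracted softly — one cannot invoke Chernoff bounds for $\log Y$ — which is why the argument is routed through relative entropy (sub-additive, hence robust) rather than through sharper tail estimates; the interplay of this soft input with the genealogical counting is where the real work lies.
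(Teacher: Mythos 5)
Your high-level picture — pile dynamics, size-biased $\log Y$, concentration of the typical pile size at time $t_{\rm ent}$ — matches the paper's, but two of the mechanisms you propose to convert this into a bound on $\E[d_{\rm TV}(t)]$ would not close the argument.

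\textbf{Lower bound.} You claim the entropy lower bound \eqref{l1entbound} ``already gives $t_{\rm mix}(\eps)\gtrsim t_{\rm ent}$'', and plan to use it as is. That gives only $\asymp$, which is Theorem~\ref{pr:tent-tmix}, not cutoff: for $\eps$ close to $1$, \eqref{l1entbound} reads $t_{\rm mix}(\eps)\ge(1-\eps)t_{\rm ent}(1+o(1))$, a vacuously weak lower bound. To establish \eqref{eq:cutoff-def} one must prove $\E[d_{\rm TV}((1-\delta)t_{\rm ent})]\to 1$ for every fixed $\delta>0$, and that genuinely requires the WLLN hypothesis. The paper gets this from Lemma~\ref{lem:glb} combined with Lemma~\ref{lemma:main-WLLN}: at $t=(1-\delta)t_{\rm ent}$, with probability $\to 1$ essentially all mass is in piles of size $\ge e^\psi/n$ (with $1\ll\psi\ll\log n$), so at most $\sim ne^{-\psi}$ sites carry all the mass and $d_{\rm TV}\to 1$. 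Your proposal never invokes the WLLN on the lower-bound side.

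\textbf{Upper bound.} You propose to deduce $D(\eta_t\|\pi)=o(1)$ at $t\sim t_{\rm ent}$ from the concentration of $\log(n\eta_t(X^*))$ and then apply Pinsker, and you remark that the argument ``is routed through relative entropy (sub-additive, hence robust).'' This cannot work: Proposition~\ref{pr:BC} gives the best available entropy decay, and plugging it into Pinsker yields \eqref{ent_bound}, i.e.\ $t_{\rm mix}\lesssim t_{\rm ent}\log\log n$, off by a diverging factor. More to the point, concentration of $\log(n\eta_t(X^*))$ in probability (under the sampling $X^*\sim\eta_t$ and the dynamics) does not imply $\E_{X^*}[\log(n\eta_t(X^*))]=D(\eta_t\|\pi)$ is small, because rare sites with atypically large $\eta_t(x)$ can dominate the expectation. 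This is exactly why the paper abandons the entropy route for the sharp upper bound. Instead, the paper's upper bound (Section~\ref{suse:ub-cutoff}) splits $\eta_t=\eta_t^++\eta_t^-$ by pile size, kills $\eta_t^+$ using Lemma~\ref{lemma:main-WLLN}, then from $\eta_t^-$ removes the few heavy sites via the \emph{two-chunk} second-moment bound of Lemma~\ref{lemma:pile-2}, and finally runs the $L^2$ contraction of Lemma~\ref{lemma:triangle-ineq-ub} on the remaining subconfiguration for a short burn-in $s=16t_{\rm rel}\psi\ll t_{\rm ent}$. The $L^2$ step is admissible here precisely because after truncation the effective density is $O(e^{3\psi}/n)$ per site, so the $L^2$ contraction only needs to kill a factor $e^{O(\psi)}$, not $n$. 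You gesture at ``a first/second-moment computation'' for the heavy sites, which is the right instinct, but you lack this truncate-then-$L^2$ step, and the Pinsker detour in its place would leave the proof with the spurious $\log\log n$.
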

		In words, Theorem \ref{th:cutoff} states that the abrupt convergence to equilibrium for Block Averages at time $t_{\rm ent}$ is parallel to a {\rm WLLN} --- the simplest 
		concentration phenomenon in probability --- for some auxiliary i.i.d.\ triangular arrays of length $\mu^{-1}\log n$.   The next result (Theorem \ref{th:cutoff-window}) pushes this analogy further, establishing that the minimal assumption 
			\begin{equation}
			\label{eq:HP-noncutoff}
			\sigma^2\ll {\mu \log n}\comma
		\end{equation}
yields estimates on the size of the cutoff window.
		This condition is minimal because, in general, $\sigma^2 \le \mu \log n$ always holds true  (cf.\ \eqref{eq:mu-sigma} and $X\le n$). Note also that, by Chebyshev inequality,  \eqref{eq:HP-noncutoff} alone clearly implies \eqref{eq:WLLN}.
			\begin{theorem}[Cutoff window]\label{th:cutoff-window} Assume  \eqref{eq:prod-condition2} and \eqref{eq:HP-noncutoff}.
			Then, the quantity
			\begin{equation}\label{eq:tw}
				t_{\rm w}= \left(1+\frac{\sigma}{\mu}\right)\frac{n\sqrt{\log n}}{\E[X]\sqrt \mu}
			\end{equation}satisfies $t_{\rm w}\ll t_{\rm ent}$, and
				\begin{equation}
					t_{\rm mix}(\eps)-t_{\rm mix}(1-\eps)\lesssim t_{\rm w}  \comma\qquad \eps\in (0,\tfrac12)\fstop
			\end{equation}
		\end{theorem}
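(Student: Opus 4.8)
\emph{Reduction and the elementary estimate.}
Since \eqref{eq:HP-noncutoff} implies \eqref{eq:WLLN}, Theorem~\ref{th:cutoff} already gives cutoff at $t_{\rm ent}$, so only the window remains. That $t_{\rm w}\ll t_{\rm ent}$ is immediate: by \eqref{eq:t-ent} and \eqref{eq:tw},
\[
\frac{t_{\rm w}}{t_{\rm ent}}=\frac{1+\sigma/\mu}{\sqrt{\mu\log n}}=\frac{1}{\sqrt{\mu\log n}}+\frac{\sigma}{\mu\sqrt{\mu\log n}}\longrightarrow 0\comma
\]
where $\mu\ge\log 2$ and $\log n\to\infty$ handle the first term and $\sigma^2\ll\mu\log n$ the second. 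It thus suffices to show that for every $\eps\in(0,\tfrac12)$ there is $C=C(\eps)<\infty$ with $t_{\rm mix}(\eps)\le t_{\rm ent}+Ct_{\rm w}$ and $t_{\rm mix}(1-\eps)\ge t_{\rm ent}-Ct_{\rm w}$ for all large $n$; subtracting (and using $t_{\rm w}\to\infty$) gives $t_{\rm mix}(\eps)-t_{\rm mix}(1-\eps)\lesssim t_{\rm w}$.

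\emph{The dilution clock.}
The heart of both bounds is the pile-dynamics description of $\eta_t$. Tracking a tagged unit of mass backwards through the averaging history (equivalently, sampling a site from $\eta_t$ and running the dual walk of Appendix~\ref{app:duality}), its rescaled local density $n\,\eta_t(\cdot)$ equals $\exp(\log n-S_t)$ up to a multiplicative factor that is $1+o(1)$ as long as the region occupied by $\eta_t$ has size $o(n)$, where the \emph{dilution clock} $S_t:=\sum_{j=1}^{M_t}\log K_j$ records the sizes $K_j$ of the $M_t$ averaging events that have touched the tagged line by time $t$. Two observations make $S_t$ transparent. First, a given step touches the tagged line with probability $\E[X]/n$ regardless of the past, so $M_t$ is (essentially) a sum of independent indicators with $\E[M_t]=m_t:=t\,\E[X]/n$ and $\var(M_t)\lesssim m_t$. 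Second, conditionally on a touch the block is \emph{size-biased}, i.e.\ of size $Y$; hence, given $M_t$, the $\log K_j$ are i.i.d.\ copies of $\log Y$, with mean $\mu$ and variance $\sigma^2$ as in \eqref{eq:mu-sigma}. By the law of total variance,
\[
\E[S_t]=m_t\,\mu=\frac{t\,\E[X]\,\mu}{n}\comma\qquad
\var(S_t)=\sigma^2\,\E[M_t]+\mu^2\,\var(M_t)\lesssim m_t\,(\sigma^2+\mu^2)\fstop
\]
For $t\asymp t_{\rm ent}$ one has $m_t\asymp\log n/\mu$, so $\E[S_t]-\log n=\E[X]\,\mu\,(t-t_{\rm ent})/n$ and, by \eqref{eq:HP-noncutoff}, $\sqrt{\var(S_t)}\asymp(1+\sigma/\mu)\sqrt{\mu\log n}$; note $\E[S_{t_{\rm ent}}]=\log n$. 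Crucially, $t_{\rm w}$ is tuned so that displacing $t$ by $t_{\rm w}$ moves $\E[S_t]$ by exactly one standard deviation of $S_t$: $\E[X]\,\mu\,t_{\rm w}/n\asymp(1+\sigma/\mu)\sqrt{\mu\log n}$. So the window bound reduces to a Chebyshev estimate for $S_t$ around the level $\log n$.

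\emph{From the clock to the total-variation distance.}
The pile dynamics also supplies two one-sided conditional estimates. On $\{S_t\ge\log n+r\}$ the mass has essentially equidistributed and $d_{\rm TV}(t)\le\alpha(r)+o(1)$ with $\alpha(r)\to0$ as $r\to\infty$; a convenient route is $d_{\rm TV}(t)\le\tfrac12\sqrt{\,n\sum_x\eta_t(x)^2-1\,}$ (Cauchy--Schwarz) together with a bound on $n\sum_x\eta_t(x)^2$ from the pile dynamics. On $\{S_t\le\log n-r\}$, instead, $\eta_t$ places $1-o(1)$ of its mass on the set $S$ of sites of rescaled density $\ge e^{r/2}$, which has at most $e^{-r/2}n$ elements, so $d_{\rm TV}(t)\ge\eta_t(S)-\pi(S)\ge 1-o(1)-e^{-r/2}$. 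Now fix $\eps$ and choose $C=C(\eps)$ so large that, at $t=t_{\rm ent}\pm Ct_{\rm w}$, $\var(S_t)/r_n^2\le\eps/2$ where $r_n:=\tfrac12\big|\E[S_t]-\log n\big|\asymp C(1+\sigma/\mu)\sqrt{\mu\log n}\to\infty$ (possible since $\var(S_t)/r_n^2\asymp C^{-2}$). At $t=t_{\rm ent}+Ct_{\rm w}$, Chebyshev gives $S_t\ge\log n+r_n$ with probability $\ge1-\eps/2$, whence $\E[d_{\rm TV}(t)]\le\alpha(r_n)+o(1)+\eps/2<\eps$ for large $n$, i.e.\ $t_{\rm mix}(\eps)\le t_{\rm ent}+Ct_{\rm w}$. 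At $t=t_{\rm ent}-Ct_{\rm w}$, symmetrically $S_t\le\log n-r_n$ with probability $\ge1-\eps/2$, so $\E[d_{\rm TV}(t)]\ge(1-e^{-r_n/2}-o(1))(1-\eps/2)>1-\eps$ for large $n$, i.e.\ $t_{\rm mix}(1-\eps)\ge t_{\rm ent}-Ct_{\rm w}$.

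\emph{Main obstacle.}
The real work is justifying the dilution-clock reduction exactly where it is needed, namely for $S_t\approx\log n$: the multiplicative error in $n\,\eta_t(\cdot)\approx\exp(\log n-S_t)$ is $1+o(1)$ only while $\eta_t$ is supported on $o(n)$ sites, and it deteriorates once the averaging blocks start to overlap the occupied region appreciably. One must show that this ``saturation'' window --- where $S_t$ lies within $O(1)$ of $\log n$, i.e.\ within $O(1/\mu)$ averaging events, i.e.\ within $O\big(n/(\mu\,\E[X])\big)$ steps --- is traversed in time $\ll t_{\rm w}$; the ratio of these timescales is $\asymp(\mu\log n)^{-1/2}(1+\sigma/\mu)^{-1}\to0$, so the slack $Ct_{\rm w}$ absorbs all such corrections. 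A second, more bookkeeping, difficulty is to pass from estimates along a single tagged mass-line to estimates on the whole profile $\eta_t$ and to integrate over the environment; this reuses the overlap and second-moment controls developed for the proof of Theorem~\ref{th:cutoff}.
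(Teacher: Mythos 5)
Your ``dilution clock'' $S_t=\sum_{j\le M_t}\log K_j$ is exactly the paper's $\sum_{s\le T}\log Y_s$ from Lemma~\ref{lemma:size-pile}, and your variance computation $\sqrt{\var(S_t)}\asymp (1+\sigma/\mu)\sqrt{\mu\log n}$ with the Chebyshev step correctly recovers Lemma~\ref{lemma:main-window}. The verification that $t_{\rm w}\ll t_{\rm ent}$ is also fine. The gaps lie in the translation from $S_t$ to bounds on $d_{\rm TV}(t)$, which is where the real work of this theorem sits.

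First, you repeatedly treat $\{S_t\ge\log n+r\}$ (resp.\ $\{S_t\le\log n-r\}$) as an event that determines the whole configuration $\eta_t$, writing ``on $\{S_t\ge \log n+r\}$ the mass has essentially equidistributed'' and ``on $\{S_t\le\log n-r\}$, $\eta_t$ places $1-o(1)$ of its mass on $S$''. But $S_t$ lives on the law of a \emph{single} marked chunk; conditioning on it does not fix $\eta_t$. The correct bridge is the identity $\Prob(|\zeta_t(U)|\in\cA)=\E[|\eta_t^{\cA}|]$ (eq.~\eqref{eq:pile-size-ex-mass}), which controls only the \emph{expected} mass in piles of a prescribed size; passing to high-probability statements about $\eta_t$ itself requires Markov's inequality (as in Lemma~\ref{lem:glb}, for the lower bound) or the triangle split $\eta_t=\eta_t^+ + \eta_t^-$ (for the upper bound). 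This is more than bookkeeping.

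Second, and more seriously, the ``convenient route'' $d_{\rm TV}(t)\le\tfrac12\sqrt{n\sum_x\eta_t(x)^2-1}$ followed by a bound on $\E\left[n\sum_x\eta_t(x)^2\right]$ does not work: the expected $L^2$ distance is dominated by a few anomalously large piles. Indeed $\E[|\zeta_t(U)|]=\E[Z^{-1}]^t\approx e^{-t/t_{\rm rel}}$, so at $t\asymp t_{\rm ent}$ one has $\E[|\zeta_t(U)|]\approx n^{-(\E[X]-1)/(\E[X]\mu)}\gg 1/n$ whenever $\mu>1$, and since two chunks in the same pile sit at the same site, $\E\left[n\sum_x\eta_t(x)^2\right]\ge n\,\E[|\zeta_t(U)|]\gg 1$ there. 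Equivalently, $L^2$-cutoff happens only at $t_{\rm rel}\log n\gtrsim\mu\, t_{\rm ent}$, so the naive Cauchy--Schwarz bound is off by a factor that can diverge. The paper's remedy --- discard piles of size $>e^{\psi}/n$ to form $\eta_t^-$, further truncate the residual profile at level $e^{3\psi}/n$ (controlled via the two-chunk overlap bound of Lemma~\ref{lemma:pile-2}), and only then run $L^2$ relaxation for $s\asymp t_{\rm rel}\psi\ll t_{\rm w}$ steps (Lemma~\ref{lemma:triangle-ineq-ub}) --- is the essential content, and your ``bound from the pile dynamics'' would have to be precisely this conditional, truncated bound; as written it points at the wrong estimate.

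Finally, the ``saturation window'' you flag as the obstacle is not quite the issue the paper confronts. The overlap between piles (the reason $\eta_t(e_t(U))$ can exceed $|\zeta_t(U)|$) is not confined to a short time window that one traverses quickly; it is handled uniformly in $t$ by Lemma~\ref{lemma:pile-2}, which shows $\Prob(e_t(U)=e_t(U'),\,|\zeta_t(U)|<\theta,\,|\zeta_t(U')|<\theta)\le\theta+1/n$. The $O(t_{\rm rel}\psi)$ of extra relaxation time gets absorbed into $t_{\rm w}$ by the choice $\psi\ll t_{\rm w}/t_{\rm rel}$ (eq.~\eqref{eq:psi-gamma}), which is a different accounting than the ratio of timescales you compute.
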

		\begin{remark}
			The assumptions in Theorem \ref{th:cutoff-window} may be equivalently restated as
		\begin{align}\label{eq:nec} 
			\frac{t_{\rm ent}}{t_{\rm rel}}\gg 1+\frac{\sigma^2}{\mu^2}\comma
		\end{align} that is, as a strengthened version of the entropic product condition in \eqref{eq:prod-condition2}. 
		\end{remark}
		
			To complete the picture, one is typically interested in capturing the second-order terms of the mixing times or, equivalently, determining the so-called cutoff profile.  It turns out that,  next to the assumptions of Theorem \ref{th:cutoff-window}, having a Central Limit Theorem (CLT) for $\log Y$ --- in place of the WLLN  in \eqref{eq:WLLN} --- suffices to grant a non-trivial Gaussian profile inside the window $t_{\rm w}$  introduced in Theorem \ref{th:cutoff-window}. To simplify the exposition we consider two separate scenarios, according to whether the size of the blocks $X$ is deterministic or not:
			\begin{itemize}
			\item scenario 1: \eqref{eq:prod-condition2} holds, and $\sigma\equiv 0$
			\item scenario 2: \eqref{eq:prod-condition2} and \eqref{eq:HP-noncutoff} hold, $\sigma\neq 0$ for all $n\in\bbN$, and the CLT holds: 
			\begin{equation}\label{eq:CLT}
				\lim_{n\to \infty} \Prob\bigg(\sqrt{\frac{\mu}{\log n}}\;\sum_{s=1}^{\mu^{-1}\log n}\frac{\log Y_s-\mu}{\sigma}> \beta\bigg) = \Phi(-\beta)\comma\qquad \beta \in \R\comma
			\end{equation}  
			where $\Phi$ is the CDF of a standard Gaussian. 
			\end{itemize} 
			 
		\begin{theorem}[Cutoff profile]
			\label{th:profile1} 
			Assume either scenario 1 or scenario 2. 
Letting $\rho=\frac{\sigma}{\mu}$, 
			\begin{equation}\label{eq:statement-profile}
				d_{\rm TV}(t_{\rm ent}+\beta t_{\rm w})-\Phi\bigg(-\beta\frac{1+ \varrho}{\sqrt{1+\varrho^2}} \bigg)\overset{\Prob}\longrightarrow 0\comma\qquad \beta \in \R\fstop
			\end{equation}
		\end{theorem}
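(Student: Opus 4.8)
The plan is to work directly with the random mass vector $\eta_t$ rather than its mean. Since $\sum_{x}\eta_t(x)=1$, one has the elementary identity
\begin{equation}
d_{\rm TV}(t)=\frac1n\sum_{x\in V}\bigl(1-n\,\eta_t(x)\bigr)_+=\frac1n\sum_{x\in V}\bigl(n\,\eta_t(x)-1\bigr)_+ ,
\end{equation}
so that, with $t=t_{\rm ent}+\beta t_{\rm w}$ (already known to lie in the cutoff window by Theorem \ref{th:cutoff-window}), the statement reduces to showing that the empirical average $\frac1n\sum_x(1-n\eta_t(x))_+$ converges in probability to $\Phi(-\beta\tfrac{1+\varrho}{\sqrt{1+\varrho^2}})$. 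I would establish this in two steps: (a) identify the limiting law of $n\,\eta_t(x)$ for a single fixed vertex $x\neq x_0$, and (b) prove a spatial decorrelation estimate that promotes (a) to a law of large numbers over $x$.

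For step (a) I would use the pile dynamics: running the fragmentation backward from $(x,t)$ expresses $n\,\eta_t(x)$ as $n$ times a sum over the genealogical lines reaching $x_0$ at time $0$, each line carrying the product of the reciprocals of the block sizes it meets. The technical core is to show that, on the scale $\sqrt{\mu\log n}$, this sum is governed by a single optimal line up to multiplicative factors $e^{o(\sqrt{\mu\log n})}$, so that (in the triangular-array sense, as $\mu$ and $\sigma$ may depend on $n$)
\begin{equation}
\frac{\log\bigl(n\,\eta_t(x)\bigr)}{\sqrt{\mu\log n}}\ \overset{\mathrm{d}}{\longrightarrow}\ \cN\bigl(\beta(1+\varrho),\,1+\varrho^2\bigr).
\end{equation}
The centering encodes the defining property of $t_{\rm ent}$ — it is the time at which the typical $\log$-cost $\sum_s\log Y_s$ accumulated along such a line is $\asymp\log n$ — while the variance decomposes according to the two-source picture anticipated in the introduction: a contribution $1$ from the (asymptotically Gaussian) Poissonian number of averaging events felt by $x$, i.e.\ the randomness in the choice of block members, and a contribution $\varrho^2=\sigma^2/\mu^2$ from the fluctuations of $\sum_s\log Y_s$ over $\asymp\mu^{-1}\log n$ terms, which is where the CLT \eqref{eq:CLT} enters (in scenario $1$ one has $\sigma\equiv0$, the second source vanishes, and only the Poissonian CLT is used). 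Granting this display: since $\sqrt{\mu\log n}\to\infty$, the variable $n\,\eta_t(x)$ avoids any fixed neighbourhood of $1$ with probability $\to1$, hence $(1-n\eta_t(x))_+\to\IND\{n\eta_t(x)<1\}$ in probability; bounded convergence then yields $\E[(1-n\eta_t(x))_+]\to\Prob[\cN(\beta(1+\varrho),1+\varrho^2)<0]=\Phi(-\beta\tfrac{1+\varrho}{\sqrt{1+\varrho^2}})$, and so, up to the $O(1/n)$ contribution of $x=x_0$, $\E[d_{\rm TV}(t)]\to\Phi(-\beta\tfrac{1+\varrho}{\sqrt{1+\varrho^2}})$.

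For step (b) it suffices to show $\var(d_{\rm TV}(t))\to0$, i.e.\ that $(1-n\eta_t(x))_+$ and $(1-n\eta_t(x'))_+$ are asymptotically uncorrelated for all but a vanishing fraction of pairs $x\neq x'$. The mechanism is that the backward pile dynamics from $(x,t)$ and from $(x',t)$ explore essentially disjoint portions of the averaging history until a late coalescence time, and that revealing their common part changes the overloaded/underloaded status of either endpoint only with negligible probability; this gives the required asymptotic independence and, combined with (a), convergence in probability. I expect the main obstacle to be the dominant-line estimate in (a): it needs a lower bound producing a near-optimal genealogical line whose fluctuations are Gaussian with the stated variance, a matching upper bound controlling the number of near-optimal lines so that their aggregate contribution stays within $e^{o(\sqrt{\mu\log n})}$ of the dominant one, and a clean separation of the Poissonian and block-size sources of randomness so that the CLT \eqref{eq:CLT} is invoked exactly where needed; the coalescence estimate behind (b) should be more routine but still requires quantitative control.
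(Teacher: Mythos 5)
Your proposal takes a genuinely different route from the paper, and it contains a gap that I believe is fundamental rather than a matter of routine estimates.

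You propose to control the \emph{vertex mass} $\eta_t(x)$ at a fixed $x$, showing that $\log(n\,\eta_t(x))/\sqrt{\mu\log n}$ is asymptotically Gaussian because ``a single optimal genealogical line'' dominates the sum $\eta_t(x)=\sum_i |\zeta_t^{(i)}(x)|$ up to multiplicative $e^{o(\sqrt{\mu\log n})}$ errors. This is a directed-polymer-type statement: the vertex mass is a partition function $\sum_i e^{-S_i}$ over exponentially many lines, and dominance by one line amounts to a ``localized''/glassy phase. Nothing in the paper proves, or needs, such a claim, and I see no reason to expect it at the entropic time (the cost of a line concentrates, not just its maximum, so the bulk of near-optimal lines may well contribute at the same exponential order as the best one). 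In fact, the paper's Lemma~\ref{lemma:pile-2}, together with the computation in \eqref{eq:fill-the-dots}, bounds the probability that two \emph{independently sampled} mass-weighted chunks $U,U'$ land on the same vertex and both sit in small piles by $\theta+1/n$; if a single pile typically carried a $\Theta(1)$ fraction of the mass at its vertex, this would not be $o(1)$. So the crucial step (a) of your plan is not just ``the main obstacle'' — it is unproven and quite possibly false.

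The paper avoids this issue entirely by never looking at the marginal of $\eta_t(x)$ at a fixed $x$. Instead it studies the \emph{size-biased pile size} $|\zeta_t(U)|$ — the size of the pile carrying a marked chunk, which has the clean i.i.d.\ multiplicative representation $|\zeta_t(U)|\overset{\rm d}{=}\prod_{s\le t}Z_s^{-1}$ of Lemma~\ref{lemma:size-pile}. The identity \eqref{eq:pile-size-ex-mass}, $\Prob(|\zeta_t(U)|\in\cA)=\E[|\eta_t^\cA|]$, translates the pile-size distribution into the expected mass carried by piles of that size, and Lemma~\ref{lemma:main} (a conditional CLT combining the Binomial number of effective updates with the CLT \eqref{eq:CLT} for $\log Y$) gives the $\Xi_\rho(\beta,\gamma)$ profile directly for $\Prob(|\zeta_{t_*(\beta)}(U)|>e^\psi/n)$ — this is exactly where the two sources of randomness you describe are separated, but at the level of the size-biased pile, not the vertex mass. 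The lower bound on $d_{\rm TV}$ then comes from Lemma~\ref{lem:glb}, the upper bound from the estimate \eqref{eq:ub-cutoff} built on Lemma~\ref{lemma:triangle-ineq-ub} and Lemma~\ref{lemma:pile-2}, and concentration is a second-moment bound \eqref{eq:aim-2nd-mom} on $|w_t|$ proved via a two-chunk coupling and the Efron–Stein inequality, not via spatial decorrelation over vertices. Your scheme and the paper's agree on the heuristic (two competing Gaussian fluctuations, one Poissonian and one from block sizes), but the paper proves the profile for a quantity with a tractable law, whereas your proof would need to first establish a log-normal limit for a partition function — a substantially harder and, as it stands, unsupported assertion.
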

		Here we use the notation $\overset{\Prob}\longrightarrow $ for convergence in probability. We also note that, as usual, all quantities in the above expressions (like $\mu,\sigma,\rho$) are allowed to vary with $n$, and we do not necessarily assume convergence of the sequence $\Phi\big(-\beta(1+\varrho)/\sqrt{1+\varrho^2}\big)$. 
			\begin{figure}[h]
			\includegraphics[width=7cm]{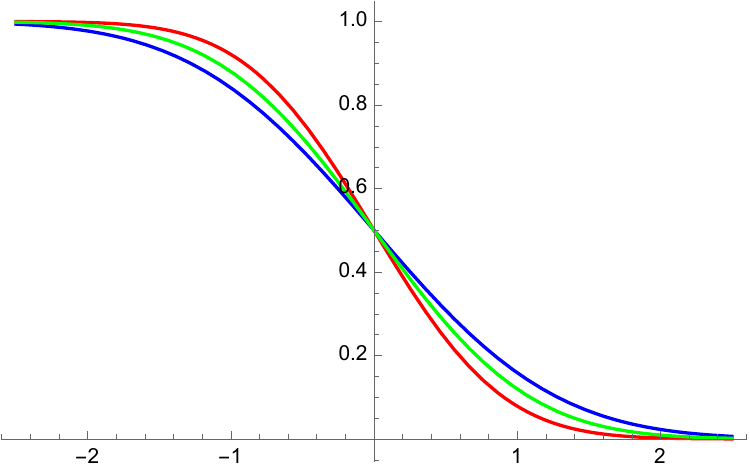}
			\caption{
				The cutoff profile in \eqref{eq:statement-profile} for three values of $\rho$, namely $\rho=0,1,5$ (in blue, red and green, respectively). }\label{fig:profile}
		\end{figure}
		\begin{remark}\label{rem:lindeberg}
			In view of Feller's theorem (see, e.g., \cite[Theorem 11.1.5]{athreya_lahiri_measure_2006}), simple manipulations show that, under \eqref{eq:prod-condition2} and \eqref{eq:HP-noncutoff},  the CLT assumption in \eqref{eq:CLT} is equivalent to the following Lindeberg condition for the triangular array $\log Y$:
			\begin{equation}\label{eq:lindeberg}
				\lim_{n\to\infty}\E\bigg[\bigg(\frac{\log Y -\mu}\sigma\bigg)^2\,\IND_{\left|\frac{\log Y-\mu}\sigma\right|>\delta\sqrt\frac{\log n}\mu}\bigg]=0\comma\qquad \delta>0\fstop
			\end{equation}
		\end{remark}

		As already anticipated in the introduction, the factor $\rho$ appearing in the cutoff profile
	\begin{equation}\label{eq:Psi}\beta\in \R\longmapsto\Psi_{\varrho}(\beta)= \Phi\bigg(-\beta\frac{1+\rho}{\sqrt{1+\rho^2}}\bigg)\in [0,1]\comma
		\end{equation} 
		accounts for the competition of two sources of fluctuations, one from the block sizes and one from the randomness of the block composition. In particular, when $\rho =0$ or $\infty$, only one of the two fluctuations asymptotically survives. Note that the factor $\rho$ crucially enters also in the definition of the cutoff window $t_{\rm w}$ as defined in \eqref{eq:tw}.
		
		It is worth to point out the symmetry of the profile with respect to $\rho$, that is,  \begin{equation}\Psi_{\varrho}(\beta)= \Psi_{\varrho^{-1}}(\beta)\comma\qquad \beta\in \R\fstop
			\end{equation} As a consequence of this symmetry and the monotonicity of $\rho\in [0,1]\mapsto(1+\rho)/{\sqrt{{1+\rho^2}}}$, the curve associated to $\rho =0$ is above (resp.\ below) the one associated to $\rho =1$ on the positive (resp.\ negative) half-line, whereas all other curves for $\rho \not\in\{0,1,\infty\}$ are sandwiched between these two; see Figure \ref{fig:profile}.

\subsection{Deterministic block sizes}
When the block size $X$ (and, thus, $\log Y$) is deterministic, 
Proposition \ref{pr:prod-cond} and Theorem \ref{th:profile1} (assuming scenario 1) yield a full characterization of cutoff -- including the cutoff profile -- in terms of the entropic product condition only, as summarized by the following corollary. Note that, in the deterministic case 
$X\equiv k$, one has
	\begin{equation}t_{\rm rel}= \frac{n-1}{k-1}\comma 	 \qquad t_{\rm ent}= \frac{n\log n}{k\log k}\comma\qquad t_{\rm w}=\frac{n\sqrt{\log n}}{k\sqrt{\log k}}\fstop \end{equation}
	\begin{corollary}[Cutoff  for deterministic sizes]\label{cor:cutoff-deterministic} Assume $X\equiv k$, for some sequence $k=k(n)$. Then, the following conditions are equivalent:
		\begin{itemize}
			\item$\log k \ll \log n$;
			\item cutoff occurs at time $t_{\rm ent}$ and, further,
			\begin{equation}d_{\rm TV}(t_{\rm ent}+\beta t_{\rm w})\overset{\Prob}\longrightarrow\Phi(-\beta)\comma\qquad \beta \in \R\fstop
			\end{equation}
		\end{itemize}   
		
	\end{corollary}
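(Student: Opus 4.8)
The plan is to derive Corollary \ref{cor:cutoff-deterministic} as a direct consequence of the general machinery already assembled, observing that when $X\equiv k$ the size-biased variable $Y$ also equals $k$ deterministically, so $\mu=\log k$ and $\sigma=0$; this is precisely scenario 1, provided the entropic product condition holds. For the implication ($\log k\ll\log n$) $\Rightarrow$ (cutoff with Gaussian profile), I would first note that $\mu=\log k\ll\log n$ is exactly the entropic product condition \eqref{eq:prod-condition2}, and since $\sigma\equiv 0$, scenario 1 of Theorem \ref{th:profile1} applies verbatim. Plugging $\rho=\sigma/\mu=0$ into \eqref{eq:statement-profile} gives $d_{\rm TV}(t_{\rm ent}+\beta t_{\rm w})\overset{\Prob}\longrightarrow\Phi(-\beta)$, which in particular forces $t_{\rm mix}(\eps)\sim t_{\rm ent}$ for every $\eps\in(0,1)$ (alternatively this already follows from Theorem \ref{th:cutoff}, whose WLLN hypothesis \eqref{eq:WLLN} is trivially satisfied when $\log Y_s\equiv\mu$), hence cutoff in the sense of \eqref{eq:cutoff-def}. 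The explicit forms of $t_{\rm rel}$, $t_{\rm ent}$, $t_{\rm w}$ in the deterministic case are just specializations of \eqref{eq:t-rel}, \eqref{eq:t-ent}, \eqref{eq:tw} with $\E[X]=k$ and $\mu=\log k$, so there is nothing to prove there beyond arithmetic.

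The only substantive direction is the contrapositive: I must show that if $\log k\not\ll\log n$ — equivalently, along some subsequence $\log k\asymp\log n$, i.e.\ $k=n^{\Omega(1)}$ — then cutoff fails. Here I would invoke Proposition \ref{pr:prod-cond} (the entropic product condition is necessary for cutoff): cutoff implies $t_{\rm ent}\gg t_{\rm rel}$, which in the deterministic case reads $\frac{n\log n}{k\log k}\gg\frac{n-1}{k-1}$, i.e.\ (up to the bounded factors $\frac{k}{k-1}$ and $\frac{n}{n-1}$) $\frac{\log n}{\log k}\to\infty$, that is $\log k\ll\log n$. Contrapositively, if $\log k\not\ll\log n$ then $t_{\rm ent}\asymp t_{\rm rel}$ along a subsequence and cutoff cannot occur. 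This closes the equivalence.

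The main (and essentially only) obstacle is bookkeeping: making sure the negation of "$\log k\ll\log n$" is handled along subsequences correctly and that Proposition \ref{pr:prod-cond} is genuinely stated as "cutoff $\Rightarrow t_{\rm ent}\gg t_{\rm rel}$" in the form I need (it is, via \eqref{eq:prod-condition2} which is literally $\mu\ll\log n$, and $\mu=\log k$ here). I should also remark that the upper bound $t_{\rm mix}(\eps)\le t_{\rm rel}(\log n-2\log\eps)$ from \eqref{l1l2bound} together with the lower bound $t_{\rm mix}(\eps)\gtrsim t_{\rm ent}$ from \eqref{l1entbound} already pins the mixing time between $\asymp t_{\rm ent}$ and $\asymp t_{\rm rel}\log n$; when $\log k\asymp\log n$ these two scales coincide with $t_{\rm ent}\asymp t_{\rm rel}$, which is the standard "no product-condition gap, no cutoff" situation. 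No new estimate beyond what the excerpt provides is needed, so the proof is a short assembly of Theorem \ref{th:profile1} (scenario 1) in one direction and Proposition \ref{pr:prod-cond} in the other.
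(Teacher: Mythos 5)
Your proposal is correct and takes essentially the same route as the paper: the forward implication is Theorem \ref{th:profile1} in scenario 1 (with $\mu=\log k$, $\sigma=0$, $\rho=0$), and the reverse implication is the contrapositive of Proposition \ref{pr:prod-cond}, exactly as the paper's preamble to the corollary indicates. The bookkeeping points you flag (subsequences for the negation, boundedness of $k/(k-1)$ and $n/(n-1)$, and upgrading convergence in probability of the bounded sequence $d_{\rm TV}$ to convergence of its expectation so that the profile forces $t_{\rm mix}(\eps)\sim t_{\rm ent}$) are all handled correctly.
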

In particular, Corollary \ref{cor:cutoff-deterministic}  includes the main result of \cite{chatterjee2020phase}, which corresponds to $X\equiv 2$.

				We now turn to the analysis of mixing in the non-cutoff regime, in case of deterministic block sizes $X\equiv k$. By Corollary \ref{cor:cutoff-deterministic}, we, thus, consider the case where $k=n^\delta$, $\delta\in(0,1)$.

				As usual, the initial mass $\eta\in\Delta$ is assumed to be a Dirac delta at a single site $x_0\in V$.  The next result shows that after the first random time at which  the initial mass is updated, the total variation distance concentrates on the timescale $n/k$ around the survival function of a  Poisson process of unit rate.
				\begin{theorem}[Non-cutoff for deterministic sizes] 
			\label{th:polynomial}
			Fix $\delta\in(0,1)$ satisfying $\delta^{-1}\notin \N$, and take $X\equiv k$, with $k=n^\delta$. 
			Set  $\bar{t}(\beta)={\beta n/k}$, for $\beta \ge 0$, and define the random time
			\begin{equation}\label{eq:def-tau-start}
				\tau_{\rm start}=\inf\{t\ge 1:\; x_0\in A_t \}\fstop
			\end{equation}
			Then,	
			\begin{equation}\label{eq:conv-poly0}
				d_{\rm TV}(\tau_{\rm start}+\bar{t}(\beta))\overset{\Prob}{\longrightarrow}\Pr\(\tau_{\lfloor \delta^{-1}\rfloor}> \beta \)\comma\qquad \beta \ge 0\comma
			\end{equation}
			where $\tau_{j}$ is the $j$-th arrival time of a Poisson  process of unit rate. 
				\end{theorem}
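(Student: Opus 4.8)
\emph{Setup.} The plan is to analyse the Block Average process through its \emph{fragment} (``pile'') representation: tag each infinitesimal unit of the initial mass at $x_0$ by the number $j\ge 1$ of averaging events it has undergone since $\tau_{\rm start}$, so a \emph{level-$j$ fragment} carries mass exactly $k^{-j}$. Writing $m:=\lfloor\delta^{-1}\rfloor$, the hypothesis $\delta^{-1}\notin\N$ gives the strict separation $m\delta<1<(m+1)\delta$, i.e.\ $k^{-j}=n^{-j\delta}\gg 1/n$ for $j\le m$ and $k^{-j}\ll 1/n$ for $j\ge m+1$, each by a polynomial factor. By the strong Markov property and exchangeability I restart the clock at the a.s.\ finite time $\tau_{\rm start}$: there the mass consists of $k$ level-$1$ fragments on $k$ distinct sites, no further level-$1$ fragment is ever produced, and $\bar t(\beta)$ corresponds to $s:=\lfloor\beta n/k\rfloor$ subsequent steps. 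Let $\mathrm{mass}_j(x,s)$ be the level-$j$ mass at $x$, $\cM_j(s):=\sum_x\mathrm{mass}_j(x,s)$ the \emph{mass spectrum}, and $R_j(s):=\sum_x\mathrm{mass}_j(x,s)^2$. Two facts drive the argument: when a step averages over the uniform $k$-set $A$, all level-$(j{-}1)$ mass inside $A$ is promoted to level $j$ and spread evenly over $A$, so $\cM_j(s{+}1)-\cM_j(s)=m_{j-1}-m_j$ with $m_i:=$ level-$i$ mass in $A$ and $\E[m_i\mid\cF_s]=\tfrac{k}{n}\cM_i(s)$; and a Cauchy--Schwarz plus telescoping estimate on the same recursion gives $\sum_{j}R_j(s)\le\sum_j R_j(0)=1/k$ for all $s$.

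\emph{Concentration of the spectrum.} Taking expectations in the first identity, $\E[\cM_j(s)]$ solves the forward equation of the pure-birth chain $1+\mathrm{Binom}(s,k/n)$, so $\E[\cM_j(\lfloor\beta n/k\rfloor)]\to e^{-\beta}\beta^{j-1}/(j-1)!=\Pr(\tau_{j-1}\le\beta<\tau_j)$. For the variance, set $D_j(s)=\cM_j(s)-\E[\cM_j(s)]$; the recursion gives $D_j(s{+}1)=(1-\tfrac kn)D_j(s)+\tfrac kn D_{j-1}(s)+\xi_j(s)$ with $\xi_j(s)$ a martingale increment of conditional variance $\lesssim \tfrac kn(R_{j-1}(s)+R_j(s))+(\tfrac kn)^2\lesssim\tfrac1n+\tfrac{k^2}{n^2}$ by the $L^2$ bound. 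Squaring and taking expectations — orthogonality removes the cross term with $\xi_j$, and AM--GM on the surviving cross term gives the contraction $\E[((1-a)D_j+aD_{j-1})^2]\le(1-a)V_j+aV_{j-1}$ with $a=k/n$, $V_j=\E D_j^2$ — yields $V_j(s{+}1)\le(1-\tfrac kn)V_j(s)+\tfrac kn V_{j-1}(s)+O(\tfrac1n+\tfrac{k^2}{n^2})$, hence inductively $V_j(s)\le C_j(\tfrac1k+\tfrac kn)\to 0$ uniformly in $s$ (only $j\le m+1$ are needed). Thus $\cM_j(\lfloor\beta n/k\rfloor)\to\Pr(\tau_{j-1}\le\beta<\tau_j)$ in probability for each $j$, and summing, $\sum_{j=1}^m\cM_j(\lfloor\beta n/k\rfloor)\to\Pr(\tau_m>\beta)$.

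\emph{Reduction to the old mass.} Writing $\eta_t(x)=\mathrm{mass}_{\le m}(x)+\mathrm{oldmass}(x)$ with $\mathrm{mass}_{\le m}(x)=\sum_{j\le m}\mathrm{mass}_j(x)$, $\mathrm{oldmass}(x)=\sum_{j>m}\mathrm{mass}_j(x)$, and using $d_{\rm TV}(t)=\sum_x(\eta_t(x)-\tfrac1n)^+$ together with the elementary bounds $(a{-}c)^+\le(a{+}b{-}c)^+\le a+(b{-}c)^+$ for $a,b\ge 0$, one gets
\begin{equation}
\sum_{j\le m}\cM_j(t)-\tfrac{|U_t|}{n}\ \le\ d_{\rm TV}(t)\ \le\ \sum_{j\le m}\cM_j(t)+G(t),\qquad G(t):=\sum_x\big(\mathrm{oldmass}(x)-\tfrac1n\big)^+,
\end{equation}
with $U_t=\{x:\mathrm{mass}_{\le m}(x)>0\}$: indeed every $x\in U_t$ has $\eta_t(x)\ge k^{-m}=n^{-m\delta}>1/n$, while $|U_t|\le\#\{\text{fragments of level}\le m\}=\sum_{j\le m}\cM_j(t)\,k^j\le m\,k^m=o(n)$. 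Combined with the previous paragraph, the left inequality already gives $d_{\rm TV}(t)\ge\Pr(\tau_m>\beta)-o_{\Prob}(1)$, so everything reduces to proving $G(t)\to 0$ in probability at $t=\tau_{\rm start}+\lfloor\beta n/k\rfloor$.

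\emph{The main obstacle: equidistribution of the old mass.} This is the heart of the proof. An $L^2$ (or $L^p$) control of $\mathrm{oldmass}$ is useless: $\|\mathrm{oldmass}\|_2^2$ is already of the optimal order $(\text{total old mass})^2/n$, yet $\|v\|_1\le\sqrt n\|v\|_2$ loses too much for a measure spread over $\Theta(n)$ sites, so a genuine $L^\infty$ estimate is needed. I would prove, by a bootstrap over the steps $s\le\lfloor\beta n/k\rfloor$, that with high probability $\mathrm{oldmass}(x,s)\le(1+o(1))\,\Pr\big(\mathrm{Pois}(\tfrac{sk}{n})\ge m-1\big)/n$ for \emph{all} $x$. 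The induction step uses that when $x$ enters a block $A$ its old mass is reset to $(o_A+\mu_A)/k$, where $o_A/k$ is the empirical average over $A$ of $\{\mathrm{oldmass}(y,s)\}$ and $\mu_A/k$ is the level-$m$ intake; under the inductive $L^\infty$ bound, $o_A/k$ concentrates around its population mean $(\text{total old mass})/n$ with deviation $O\big(\tfrac1n\sqrt{\tfrac{\log n}{k}}\big)=o(1/n)$ by a Hoeffding--Serfling inequality for sampling without replacement, and likewise $\mu_A/k\le(1+o(1))\cM_m(s)/n\le(1+o(1))/n$ after first excluding — via $\sum_jR_j\le 1/k$ and the same concentration — that level-$m$ fragments clump excessively on a single site; a union bound over the $\le n$ sites and $\le n$ steps closes the induction. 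Since $\Pr(\mathrm{Pois}(\theta)\ge m-1)$ is increasing in $\theta$, for $m\ge 2$ it is $\le 1-e^{-\beta}<1$ at $\theta=\beta$, so $\mathrm{oldmass}(x,t)<1/n$ for every $x$ and $G(t)=0$ with high probability; for $m=1$ (i.e.\ $\delta>\tfrac12$) one only reaches $\mathrm{oldmass}(x,t)\le(1+o(1))/n$, but the $o(1/n)$ is uniform in $x$, so $G(t)\le n\cdot o(1/n)\to 0$. Inserting $G(t)\to 0$ and $\sum_{j\le m}\cM_j(t)\to\Pr(\tau_m>\beta)$ into the sandwich gives $d_{\rm TV}(\tau_{\rm start}+\bar t(\beta))\to\Pr(\tau_m>\beta)$ in probability. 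I expect this last bootstrap — making all the $o(1/n)$ deviations uniform over sites and times, and controlling the level-$m$ clumping — to be the most delicate point.
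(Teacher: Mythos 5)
Your decomposition is structurally the same as the paper's proof of Proposition \ref{pr:poly}: identify the ``Poisson spectrum'' of pile levels $j\le m=\lfloor\delta^{-1}\rfloor$, show each $\cM_j=|\hat\eta^{(j)}|$ concentrates on $p_{j-1}(\beta)=\Pr(\mathrm{Poi}(\beta)=j-1)$, and then establish that the remaining (``old'') mass is spread so uniformly that it contributes nothing to the total variation. Your sandwich inequality, the bound $|U_t|\le m k^m=o(n)$, and the concentration of the spectrum are all correct; for the latter you use a martingale variance recursion where the paper instead invokes the pile--chunk identity $\E[\cM_j(t)]=\Prob(|\zeta_t(U)|=k^{-j})$, Le~Cam for Poisson approximation, and a second-moment estimate borrowed from the cutoff-profile section --- both are fine.

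The genuine gap is in the equidistribution of the old mass, which you rightly flag as the crux. The paper's Lemma \ref{lemma:high-mom} handles this by a high-moment method: it runs $q=\lfloor(\log n)^2\rfloor$ tagged chunks $U_1,\dots,U_q$, writes $(\hat\eta^{(j)}(x))^q$ as the quenched probability that all $q$ chunks end at $x$ with pile level $j$, bounds the chain of conditional probabilities by $(1+o(1))p_{j-1}(\beta)/n$ each, and concludes via generalized Markov plus a union bound over sites. Your proposed alternative --- a bootstrap in time using Hoeffding--Serfling for the sample mean over $A$ --- is a genuinely different route, but as sketched it is not closed. The $L^2$ bound $\sum_j R_j(s)\le 1/k$ gives only $\max_y\mathrm{mass}_m(y)\le k^{-1/2}$; plugging $M=k^{-1/2}$ into Serfling yields a deviation of order $M\sqrt{\log n/k}=\sqrt{\log n}\,n^{-\delta}$, which is $\gg 1/n$ for every $\delta<1$. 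So your step ``$\mu_A/k\le (1+o(1))\cM_m(s)/n$'' is not a consequence of $\sum_j R_j\le 1/k$ alone: you first need an $L^\infty$ bound on $\mathrm{mass}_m(\cdot)$ of order $1/n$, which requires an $L^\infty$ bound on $\mathrm{mass}_{m-1}(\cdot)$, and so on --- the bootstrap in time must be nested with an induction over levels $j=1,\dots,m$, each level inheriting and refining the uniform bound of the previous one, with a union bound over $n$ sites and $n/k$ steps at every stage. This is substantially heavier than your sketch suggests, and is precisely the bookkeeping that the paper's multi-chunk moment computation avoids: that argument never tracks the dynamics of the mass at a fixed site, only coincidence probabilities for independently evolving chunks, which factorize cleanly level by level.
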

			\begin{figure}[h]
			\includegraphics[width=6cm]{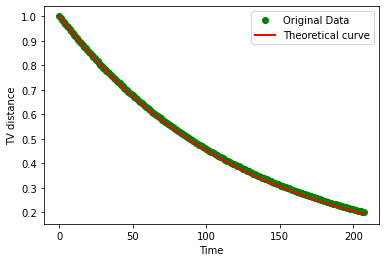}\qquad\qquad
			\includegraphics[width=6cm]{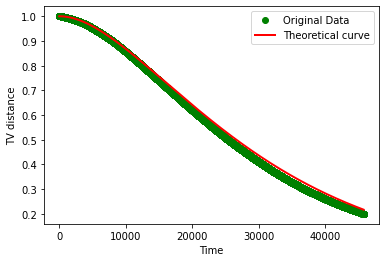}
			\caption{The plot represents the decay over time of $t\mapsto d_{\rm TV}(\tau_{\rm start}+t)$ in a single realization with $n=10^7$ and $k=n^{0.7}$ (left) and $k=n^{0.4}$ (right), up to the time $t$  such that $d_{\rm TV}(\tau_{\rm start}+t)=0.2$.}\label{fig:eps-large-1}
			\end{figure}
				\begin{remark}
				The assumption $\delta^{-1}\not\in\N$ is not  of purely technical nature, and the reason of its appearance  
				will be further discussed  in Section \ref{sec:deterministic}; see 
				Remark \ref{rem:eps-bar}.
			\end{remark}
			
				Since 
			$\tau_{\rm start}$ is geometrically distributed with mean $n/k$, we can restate \eqref{eq:conv-poly0} as a  convergence in distribution for the random variable $d_{\rm TV}(t)$, that is, 
				\begin{equation}\label{eq:conv-poly-dist}
				d_{\rm TV}(\bar t(\beta))\overset{\rm d}{\longrightarrow}\mathds{1}_{S\ge\beta}+\mathds{1}_{S<\beta}\Pr\(\tau_{\lfloor \delta^{-1}\rfloor}> \beta -S \)\comma\qquad \beta \ge 0\comma			\end{equation}
				where $S \overset{\rm d}={\rm Exp}(1)$ is a unit rate exponential random variable, independent of $\tau_{\lfloor \delta^{-1}\rfloor}$. 	Finally, taking expectations in \eqref{eq:conv-poly-dist}, we readily get
				\begin{equation}
				\E[d_{\rm TV}(\bar t(\beta))]\longrightarrow \Pr\big(\tau_{\lfloor \delta^{-1}\rfloor+1}> \beta\big)\comma\qquad \beta \ge 0\fstop
				\end{equation}

					In words, the  above results   
					state that the mixing of the process is articulated into two distinct phases. At first,  mixing needs to be triggered, by redistributing the unit mass initially placed in a single site. This first  time $\tau_{\rm start}$ is random, and approximately 
					exponentially distributed with mean $n/k$. After this triggering time, the distance starts to decay (again on timescales $n/k$), now in a deterministic way, following a Poissonian profile.  A similar two-phase decay has been recently observed on a related redistribution model \cite{avena2024mixing}.

\subsection{Outline of the paper}
The rest of the paper is organized as follows. Section \ref{sec:piles-general} is devoted to present the main tools of our analysis. There, we introduce the \emph{pile dynamics}, derive the distribution of the size of a typical pile and  its asymptotic behavior, and establish general upper and lower bounds on the total variation distance. In Section \ref{sec:proof-t-ent-t-mix} we prove Theorem \ref{pr:tent-tmix}, while in Section \ref{sec:proof-profile} we prove the cutoff results in Theorem \ref{th:cutoff} and Theorem \ref{th:cutoff-window}, and Section \ref{sec:profile} is devoted to the proof of Theorem \ref{th:profile1} on the cutoff profile. In Section \ref{sec:deterministic} we focus on the deterministic setup, proving Theorem \ref{th:polynomial}. Finally, in Section \ref{suse:alternative} we provide some examples showing the sharpness of the conditions in Theorems \ref{th:cutoff}, \ref{th:cutoff-window} and \ref{th:profile1}.

		\section{Piles \&  infinitesimal chunks}\label{sec:piles-general}
		In this section, we introduce the  central objects employed in the proofs of our main results. Specifically, we  present a richer representation of the block-average dynamics in terms of \textit{piles} and \textit{infinitesimal chunks}, rather than just the mass distribution $\eta_t$. 
	Following the evolution of sufficiently many of these piles will be key to measuring the flatness of the mass distribution.	
 The notion of piles and chunks is introduced in Section \ref{sec:piles}. 	In Sections \ref{sec:size-pile} and \ref{sec:size-pile-asymp}, we characterize the explicit distribution and corresponding asymptotic identities for the size of what we refer to as a \textquotedblleft typical pile\textquotedblright. These findings are exploited and refined in  later sections.

		\subsection{Pile-chunk dynamics}\label{sec:piles}
The original process is characterized by the variables $\eta_t(x)$, $t\ge 0, x\in V$, where $\eta_t(x)$ denotes the mass at vertex $x$ at time $t$, with the initial distribution $\eta_0$ given by the Dirac mass $\IND_{x_0}$ at some  $x_0\in V$. A finer description is obtained by keeping track of the distinct mass fragments produced by each averaging event, so that the mass $\eta_t(x)$ is given by the sum of the masses of all fragments present at $x$ at time $t$. These fragments are called \emph{piles}, and their evolution can be formally defined as follows.  
At time zero there is only one pile at $x_0\in V$, with mass equal to $\eta_0(x_0)=1$. Whenever there is an averaging event at the block $A\subset V$ with $|A|=k$, for every $x\in A$, each pile present at $x$ is independently split into $k$ labeled piles of equal mass, and the $k$ new piles are assigned to the sites $y\in A$ using a uniformly random permutation of $\{1,\dots,k\}$. In this way, at each time $t$, for each $x\in V$, if $\eta_t(x)>0$, then there is a finite number of piles at $x$, whose total mass equals $\eta_t(x)$, which we denote $(\zeta_t^{(1)}(x),\zeta_t^{(2)}(x),\ldots, \zeta_t^{(L)}(x))$, for some $L=L_t(x)\in \N$. 
	We refer to this new process $(\zeta_t)_{t\ge 0}$ as the \textit{pile dynamics}. Thus, letting $|\zeta_t^{(i)}(x)|\in [0,1]$ denote the \textit{size} of the pile $\zeta_t^{(i)}(x)$, at any time $t\ge 0$ one has 
	\begin{equation}
		\sum_i|\zeta_t^{(i)}(x)|=\eta_t(x)\comma\qquad t \ge 0\comma x \in V\,.
	\end{equation}

We further  
enrich the pile dynamics $(\zeta_t)_{t\ge 0}$ by adding a mark to one of the piles, which we interpret as the evolution  
of an \textquotedblleft infinitesimal chunk of mass\textquotedblright, 
and which we denote by $U$. 
The mark $U$ belongs initially to the unique pile at $x_0$. At each averaging event at the block $A\subset V$ with $|A|=k$, if the mark belongs to a pile sitting at one of the vertices of $A$, then the mark chooses uniformly at random one of the $k$ fragments into which its own pile is split and sticks to it until the next averaging event. When the mark belongs to a pile that is not involved in the averaging, the mark does not move.  Thus, at any time $t\ge 0$, the  infinitesimal chunk $U$ belongs to a single pile of the system, denoted, with slight abuse of notation, 
$\zeta_t(U)$, and has 
a \textit{position}, referred to as $e_t(U)\in V$,  namely the vertex at which the pile $\zeta_t(U)$ sits.
We also refer to $\zeta_t(U)$ as the marked pile, or the \emph{typical pile}. 

It follows from the above construction that the evolution of the position $e_t(U)$ of the infinitesimal chunk $U$, if we ignore all other variables, is simply the random walk with transition matrix $P_{\rm RW}$ introduced in \eqref{eq:duality}. Furthermore, if we condition on a realization of all the block updates, at each time $t\ge0$, by construction the probability that $U$ is at some $x\in V$ at time $t$ coincides with $\eta_t(x)$, i.e., $\Prob$-a.s., 
\begin{equation}\label{eq:duality1}
	{\mathsf P}_U(e_t(U)= x)=\eta_t(x)\comma\qquad t \ge 0\comma x \in V\comma
	\end{equation}
	where ${\mathsf P}_U$ denotes the quenched law of the evolution of the infinitesimal chunk $U$. 
	We shall write $\mathsf E_U$ for the corresponding expectation. Taking the $\E$-expectation on both sides of \eqref{eq:duality1}, that is averaging over all block updates, we recover the duality relation \eqref{eq:duality}.

	 The analysis of the evolution of the size $|\zeta_t(U)|$ of the marked pile will be another  key ingredient in our proofs. Specifically, we shall fully exploit the following identity
	 \begin{equation}\label{eq:pile-size-ex-mass}
	 	\Prob(|\zeta_t(U)|\in \cA) = \E\left[|\eta_t^\cA|\right]\comma\qquad t \ge 0\comma \cA\subset [0,1]\comma
	 \end{equation}
	 where $\eta_t^\cA$ is defined as the mass configuration obtained from $\eta_t$ by considering only the piles 
	 with size in $\cA$, and $|\eta_t^\cA|$ stands for its total mass $\le 1$. This is easily derived by taking the expectation with respect to $\Prob$ of the quenched identity
	 \begin{equation}\label{eq:pile-size-ex-mass-quenched}
	 	{\mathsf P}_U(|\zeta_t(U)|\in \cA) = |\eta_t^\cA|\comma \qquad t \ge 0\comma \cA\subset[0,1]\fstop
	 \end{equation}
	 Indeed, by the definition of the marked pile dynamics, the probability that $U$ is in a pile of a given size coincides with the total amount of mass in piles of that size. More generally, we  combine \eqref{eq:duality1} and \eqref{eq:pile-size-ex-mass-quenched} to obtain, e.g.,
	 \begin{equation}\label{eq:super-general}
	 	{\mathsf P}_U(e_t(U)=x\,,\, |\zeta_t(U)|\in \cA\,,\,\eta_t\in \cB) = \IND\{\eta_t\in \cB\}\,\eta_t^\cA(x)\comma
	 \end{equation}
	 where $t\ge 0$, $\cA\subset [0,1]$, $\cB\subset \Delta$, and $x\in V$. 
	 An  identity of this type allows us to follow selected portions of the mass evolving via block averages, depending on the size of the piles and their locations. As an example of its applications, see, e.g., \eqref{eq:fill-the-dots-identity} in Section \ref{suse:proof-tent-tmix}.

Moreover, we will gain refined information by following several infinitesimal chunks $U,U',U''$, etc., and their piles at the same time. Their dynamics is defined as follows. All chunks belong initially to the same pile at $x_0$, and when an average event occurs with block $A$, 
the chunks \textit{independently} -- albeit  simultaneously -- uniformly choose one among the newly generated sub-piles, and stick to it until the next averaging event.    It is worth to mention that, as soon as different infinitesimal chunks join different piles (and, thus, relocate to different sites), they will never sit on the same pile at later times. This, however, does not prevent them to have the same position later on. In formulas, this means that $\zeta_t(U)=\zeta_t(U')$ if and only if $\zeta_s(U)=\zeta_s(U')$ for all $s\le t$, whereas $e_t(U)=e_t(U')$ does not necessarily imply $\zeta_t(U)= \zeta_t(U')$.	We employ this approach in several places below; see, e.g., \eqref{eq:fill-the-dots}, Lemma \ref{lemma:pile-2}, Lemma \ref{lemma:high-mom}. In analogy with the notation used for a single mark, we will  write, e.g., $\mathsf P_{U,U',U''}$ and $\mathsf E_{U,U',U''}$ for the  joint quenched law and  expectation of the chunks $U,U',U''$.

 We remark that this pile-chunk representation may be viewed as a more flexible version of the \textquotedblleft particle model\textquotedblright\ introduced in \cite[Definition 2.5]{chatterjee2020phase}. Roughly speaking,  the authors in \cite{chatterjee2020phase} start by first discretizing the mass by  forming a finite number of  equal-sized
 piles, and then consider their evolution in a fashion similar to the one described above, with the important difference that piles which are either too small or that meet other piles coming from different locations get discarded. 
 These two operations are crucial when dealing with such a discretized version of the mass average dynamics. As we will show, our continuous pile-chunk dynamics avoids any discretization issue, and actually allows us to  track the pile dynamics even when meeting events between piles occur. This will play a prominent role in Section \ref{sec:deterministic}, where we consider block updates of such large size that cutoff is destroyed, and yet we are still able to identity the mixing profile.

\subsection{Distribution of the size of a typical pile}\label{sec:size-pile}
Consider an infinitesimal chunk $U$ as described in the previous section.
Since our blocks are chosen uniformly at random and independently, the size $|\zeta_t(U)|$ of its pile has a rather explicit distribution:
\begin{equation}\label{eq:size-pile-law}
	|\zeta_t(U)|\overset{\rm d}= \prod_{s=1}^t Z_s^{-1}\comma\qquad t \ge 1\comma
\end{equation}
where, letting $X_s$ denote the size of the block chosen at step $s$,  the random variables $(Z_s)_{s\ge 1}$ are i.i.d.\ with law given by
	\begin{equation}\label{eq:Z-law}
	\Prob(Z_s=1\mid X_s)=1-\frac{X_s}{n}\comma\qquad \Prob(Z_s=X_s\mid X_s)=\frac{X_s}{n}\fstop
\end{equation}
Indeed, $Z_s=1$ corresponds to the event \textquotedblleft the uniformly-chosen block of size $X_s$ did not involve the chunk $U$\textquotedblright, while $Z_s>1$ encodes the information that the update of size $X_s$ did hit the site $e_s(U)\in V$ and, thus, split the pile $\zeta_s(U)$ into $Z_s=X_s$ sub-piles. 
This readily implies \eqref{eq:size-pile-law} and \eqref{eq:Z-law}.

Thus, 
for all $t\ge 1$ and $\theta\in (0,1)$,
\begin{align}
	\Prob\big(|\zeta_t(U)|> \theta\big)  &= \Prob\bigg(\sum_{s=1}^t \log Z_s<  -\log\theta \bigg)\comma	
\end{align}
and  part of our work will go into deriving  
sharp asymptotics for the probability on the right-hand side. 
For this purpose, let $T=T(t)$ be the number of successful splittings, i.e., 	
\begin{equation}
	T=T(t):= \# \{s\le t: Z_s>1\}\in \{0,1,\ldots,t\}\comma
\end{equation}
which satisfies  
\begin{equation}\label{eq:def-T}
	T\overset{\rm d}= {\rm Bin}(t,\Prob(Z_1>1))\overset{\rm d}={\rm Bin}\bigg(t,\frac{\E[X]}{n}\bigg)\fstop
\end{equation}  Further, let us define  $Y_s:= Z_s\mid \{Z_s>1\}$, whose law is nothing but the size-biased distribution of $X_s$, that is, letting $p_X(k):= \Prob(X_s=k)$, we have
	\begin{equation}\label{eq:p_Y}
p_Y(k):= \Prob(Y_s=k)=	\Prob(Z_s=k\mid Z_s>1)=\frac{\Prob(X_s=k) \frac{k}{n}}{\sum_{k=2}^n \Prob(X_s=k)\frac{k}{n}}= \frac{k\,p_X(k)}{\E[X]}\comma
\end{equation}
for all $k\in \{2,\ldots,n\}$.
Hence, we obtain
\begin{align}\label{eq:size-typical-Z}
	\begin{aligned}
\Prob\bigg(\sum_{s=1}^t \log(Z_s)<  -\log \theta \bigg)&=	\Prob\bigg(\sum_{s=1}^T \log 	Z_s< {-\log \theta}\,\bigg|\, Z_s>1\,,\,\forall\, s\le T\bigg)\\
&= \Prob\bigg( \sum_{s=1}^T \log Y_s < {-\log \theta}\bigg)\fstop
\end{aligned}
\end{align}

By collecting the above steps and
observing that, conditional to $T=T(t)\in \{1,\ldots,t\}$, the variables $(Y_s)_{s\ge 1}$ are all	i.i.d.\ and distributed as in \eqref{eq:p_Y}, we just proved the following crucial result.
\begin{lemma}[Size of a typical pile]\label{lemma:size-pile}
For all $t\ge 1$ and $\theta\in (0,1)$,  we have
	\begin{equation}\label{eq:heur}
		\Prob\big(|\zeta_t(U)|>\theta\big)= \Prob\bigg(\sum_{s=1}^T \log Y_s <-\log \theta\bigg)\comma 
	\end{equation}
where $T\overset{\rm d}{=} {\rm Bin}(t,\frac{\E[X]}{n})$,  $(Y_s)_{s\ge 1}$ are i.i.d.\ with distribution $p_Y$ given in \eqref{eq:p_Y}, and all random 	variables are mutually independent.  
\end{lemma}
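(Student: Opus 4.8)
The plan is to track the size of the marked pile step by step and reduce the event $\{|\zeta_t(U)|>\theta\}$ to a sum of i.i.d.\ increments via a size-biasing identity. First I would argue that at each step $s$, conditionally on the block size $X_s$ and on the entire history up to time $s-1$, the probability that the block $A_s$ contains the current position $e_{s-1}(U)$ of the chunk equals $X_s/n$, simply because $A_s$ is a uniformly random subset of $V$ of cardinality $X_s$, drawn independently of the past, so a fixed vertex lies in it with probability $X_s/n$. When this happens, the pile carrying $U$ is split into exactly $X_s$ equal fragments and its size is multiplied by $1/X_s$; otherwise the size is unchanged. Hence $|\zeta_t(U)|\overset{\rm d}{=}\prod_{s=1}^t Z_s^{-1}$ as in \eqref{eq:size-pile-law}, with $Z_s\in\{1,X_s\}$ distributed as in \eqref{eq:Z-law}, and --- crucially --- the pairs $(X_s,Z_s)_{s\ge1}$ are i.i.d., since the $X_s$ are i.i.d.\ and each $Z_s$ depends only on $X_s$ and on an independent uniform choice.

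Next I would take logarithms, writing $\{|\zeta_t(U)|>\theta\}=\{\sum_{s=1}^t\log Z_s<-\log\theta\}$, and note that only the steps with $Z_s>1$ contribute. Letting $T=\#\{s\le t:Z_s>1\}$, the i.i.d.\ structure gives $T\overset{\rm d}{=}{\rm Bin}(t,\Prob(Z_1>1))\overset{\rm d}{=}{\rm Bin}(t,\E[X]/n)$. It remains to identify the conditional law of a generic successful increment: from \eqref{eq:Z-law}, $\Prob(Z_s=k\mid Z_s>1)=\Prob(X_s=k)\,(k/n)/\Prob(Z_s>1)=k\,p_X(k)/\E[X]$, which is precisely the size-biased law $p_Y$ of \eqref{eq:p_Y}. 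A standard fact about i.i.d.\ sequences conditioned on a binary success pattern then yields that, given $T=m$, the $m$ successful values $Z_s$ are i.i.d.\ with law $p_Y$ and independent of $T$; this is the content of \eqref{eq:size-typical-Z}.

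Putting these observations together gives $\sum_{s=1}^t\log Z_s\overset{\rm d}{=}\sum_{s=1}^T\log Y_s$ with $T$ and $(Y_s)_{s\ge1}$ mutually independent and distributed as claimed, which is exactly \eqref{eq:heur}.

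\textbf{Main obstacle.} The only nontrivial point is the first step: justifying cleanly that $|\zeta_t(U)|$ is a product of \emph{independent} factors and that the per-step hitting probability is $X_s/n$ regardless of the chunk's (random-walk) trajectory. Once one observes that the block is resampled uniformly and independently at every step, so that the hitting event at step $s$ is conditionally independent of the past given $X_s$, this independence is transparent, and the rest is bookkeeping: the size-biasing computation \eqref{eq:p_Y} and the elementary decomposition of an i.i.d.\ sum into a random number of i.i.d.\ ``successful'' terms.
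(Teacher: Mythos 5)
Your proposal is correct and follows essentially the same route as the paper: establish the product representation $|\zeta_t(U)|\overset{\rm d}{=}\prod_{s=1}^t Z_s^{-1}$ from \eqref{eq:size-pile-law}--\eqref{eq:Z-law}, take logarithms, count the successful splittings to get $T\sim{\rm Bin}(t,\E[X]/n)$, identify the conditional law of a success as the size-biased distribution $p_Y$, and decompose the sum via \eqref{eq:size-typical-Z}. Your extra emphasis on why the factors $Z_s$ are i.i.d.\ (the block $A_s$ is resampled uniformly and independently of the past and of the chunk's current position) is a helpful clarification of a step the paper states more tersely, but the argument is the same.
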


If one believes the heuristics
that the system is at equilibrium as soon as the typical pile has size $\approx\frac1n$, that is 
\begin{equation}\label{eq:guess}
	t_{\rm mix}\approx \inf\left\{t\ge0 \mid \Prob\(|\zeta_t(U)|\approx\tfrac1n\)\approx 1\right\}\comma
\end{equation}
 then Lemma \ref{lemma:size-pile} provides an efficient tool to guess the mixing time simply via a LLN for $\log Y$. Furthermore, in the cutoff regime, by controlling the fluctuations of $\log Y$ (that is,  having a CLT),   one may hope that this same heuristics could yield a candidate for the size of the  window, and, possibly even the corresponding profile. In the rest of the paper, we  turn the  heuristics in \eqref{eq:guess} into a rigorous framework. Moreover, we show how a Gaussian profile, with a variance depending on the interplay between the fluctuations of $T$ and  $\log Y$, arises in presence of cutoff.

\begin{remark}
Analogues of	Lemma \ref{lemma:size-pile},  and of the heuristics in \eqref{eq:guess}, appear in other works on random walks on random graphs \cite{ben-hamou_salez_2017,bordenave_random_2018,bordenave_cutoff2019}, as well as in \cite{chatterjee2020phase}, with some remarkable differences. In \cite{ben-hamou_salez_2017,bordenave_random_2018,bordenave_cutoff2019}, $Y_s^{-1}$ is the (out-)degree of a random vertex sampled with probability proportional to its (in-)degrees, but $T\equiv t$ deterministically. In \cite{chatterjee2020phase}, $T\overset{\rm d}{=}{\rm Bin}(t,\frac{2}{n})$, but  $\log Y\equiv \log 2$ deterministically.
\end{remark}

\subsection{Asymptotics for the size of a typical pile}\label{sec:size-pile-asymp}
A crucial step in the proofs of our main results consists in determining precise asymptotics for the tail probability of $|\zeta_t(U)|$, the typical size of a pile, appearing in Lemma \ref{lemma:size-pile}. This is the content of the following three lemmas (whose proof is deferred to the end of the section), dealing  with the assumptions in Theorem \ref{th:cutoff}, \ref{th:cutoff-window}, and \ref{th:profile1}, respectively. As already mentioned, the representation provided in Lemma \ref{lemma:size-pile} plays a crucial role here. In particular, we shall apply \eqref{eq:heur} with $\theta=e^\psi/n$ for some $\psi \ll \log n$, yielding
\begin{equation}\label{eq:main1}
		\Prob\bigg(|\zeta_t(U)|>\frac{e^\psi}n\bigg)= \Prob\bigg(\sum_{s=1}^T \log Y_s <\log n-\psi \bigg)\fstop
\end{equation}

The first lemma 
covers the setting in Theorem \ref{th:cutoff}.
	\begin{lemma}\label{lemma:main-WLLN}
		Assume  \eqref{eq:prod-condition2} and a {\rm WLLN} for $\log Y$ as in \eqref{eq:WLLN}.
		Then, for all  $0< \psi\ll \log n$, we have
		\begin{align}\label{eq:lemma-1-claim}
				\lim_{n\to \infty}	\Prob\bigg(|\zeta_{a t_{\rm ent}}(U)|> \frac{e^\psi}{n}\bigg)  = \begin{dcases}
					1 &\text{if}\ a <1\\
					0&\text{if}\ a >1 \fstop
				\end{dcases}  
		\end{align}
	\end{lemma}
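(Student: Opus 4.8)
The plan is to use Lemma \ref{lemma:size-pile}, which reduces the claim \eqref{eq:lemma-1-claim} to controlling the random sum $\sum_{s=1}^T \log Y_s$ with $T \overset{\rm d}= {\rm Bin}(t, \E[X]/n)$ and $t = a\,t_{\rm ent}$. By \eqref{eq:main1} with $\theta = e^\psi/n$, we must show that $\Prob\big(\sum_{s=1}^T \log Y_s < \log n - \psi\big) \to 1$ when $a<1$ and $\to 0$ when $a>1$. First I would analyze the number of successful splittings: since $T \overset{\rm d}= {\rm Bin}(a\,t_{\rm ent}, \E[X]/n)$ and $t_{\rm ent} = n\log n/(\E[X]\mu)$ by \eqref{eq:t-ent}, the mean of $T$ is exactly $a\log n/\mu = a\,\mu^{-1}\log n$. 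Since the product condition \eqref{eq:prod-condition2} gives $\mu \ll \log n$, we have $\E[T] = a\,\mu^{-1}\log n \to \infty$, so a standard Chernoff/Bernstein bound for the binomial shows that $T$ concentrates: $T = (1+o_\Prob(1))\,a\,\mu^{-1}\log n$. More precisely, for any fixed $\epsilon>0$, with probability tending to one, $T \in \big((1-\epsilon)a\,\mu^{-1}\log n,\ (1+\epsilon)a\,\mu^{-1}\log n\big)$.

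Next I would handle the random sum on this good event for $T$. Conditionally on $T$, the sum $\sum_{s=1}^T \log Y_s$ is a sum of $T$ i.i.d.\ copies of $\log Y$, each with mean $\mu$. The WLLN assumption \eqref{eq:WLLN} says precisely that $\frac{1}{\log n}\sum_{s=1}^{\mu^{-1}\log n}(\log Y_s - \mu) \overset{\Prob}\to 0$, i.e.\ $\sum_{s=1}^{m}\log Y_s = m\mu + o_\Prob(\log n)$ when $m = \mu^{-1}\log n$. A short argument (e.g.\ a maximal inequality, or monotonicity of partial sums in the number of summands combined with the WLLN applied at the two endpoints, together with the fact that $\log Y \ge \log 2 > 0$ so partial sums are increasing in $T$) upgrades this to: uniformly over $T$ in the good window above, $\sum_{s=1}^T \log Y_s = T\mu + o_\Prob(\log n) = a(\log n)(1+o_\Prob(1))\cdot\frac{T\mu}{a\log n} + o_\Prob(\log n)$; combining with the concentration of $T$ gives $\sum_{s=1}^T \log Y_s = a\log n + o_\Prob(\log n)$. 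Since $\psi \ll \log n$, the threshold $\log n - \psi = (1+o(1))\log n$, so $\Prob\big(\sum_{s=1}^T \log Y_s < \log n - \psi\big) \to 1$ if $a<1$ and $\to 0$ if $a>1$, which is exactly \eqref{eq:lemma-1-claim}.

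The main technical obstacle is the interchange of the two limits / the ``random index'' issue: the WLLN in \eqref{eq:WLLN} is stated only for the deterministic number $m = \mu^{-1}\log n$ of summands (and note $\mu$, hence $m$, varies with $n$, so this is genuinely a triangular-array WLLN), whereas here the number of summands $T$ is random and not exactly equal to $m$. The clean way around this is to sandwich: because $\log Y_s \ge \log 2 > 0$, the partial sum $S_m := \sum_{s=1}^m \log Y_s$ is strictly increasing in $m$, so on the event $\{(1-\epsilon)m < T < (1+\epsilon)m\}$ one has $S_{\lceil(1-\epsilon)m\rceil} \le S_T \le S_{\lfloor(1+\epsilon)m\rfloor}$; then apply the WLLN (which extends from $m$ summands to $(1\pm\epsilon)m$ summands by elementary rescaling, since $(1\pm\epsilon)\mu^{-1}\log n$ summands still has mean $(1\pm\epsilon)\log n$ and the WLLN normalization absorbs the constant) to both bounding sums, conclude $S_T = a\log n + o_\Prob(\log n)$, and finally let $\epsilon \downarrow 0$. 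This, plus the elementary binomial concentration for $T$, are the only inputs; everything else is bookkeeping with $\psi \ll \log n$.
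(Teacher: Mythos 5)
Your proposal is correct and reaches the conclusion by a genuinely (if modestly) different route from the paper. Both proofs start from the same representation $\Prob(|\zeta_t(U)|>e^\psi/n)=\Prob(\sum_{s=1}^T\log Y_s<\log n-\psi)$ of Lemma~\ref{lemma:size-pile}, and both ultimately invoke a WLLN for $\approx a\,\mu^{-1}\log n$ i.i.d.\ copies of $\log Y$. The difference is in how the random index $T$ is handled. The paper conditions on $T=m$, views the resulting sum over $m$ as an integral $\int\varphi_n(\alpha)f_n(\alpha)\,\dd\alpha$, applies a \emph{local} CLT for the binomial $T$ together with Scheff\'e's lemma to replace $\varphi_n$ by the standard Gaussian density, and then computes the pointwise limit of $f_n(\alpha)$. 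You instead use an in-probability concentration bound for $T$ and, crucially, the monotonicity of the partial sums $S_m=\sum_{s\le m}\log Y_s$ (valid because $\log Y_s\ge\log 2>0$) to sandwich $S_T$ between two deterministic-index partial sums. Your argument is more elementary and perfectly adequate for Lemma~\ref{lemma:main-WLLN}; the paper's local-CLT machinery is there mainly because it is shared with the proof of Lemma~\ref{lemma:main}, where the Gaussian density of $T$ genuinely contributes to the convolution profile $\Xi_\rho$.

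Two small remarks. First, there is an $a$ missing in your sandwich: since $T$ concentrates around $a\,\mu^{-1}\log n=am$ and not around $m=\mu^{-1}\log n$, the good event should be $\{(1-\epsilon)am<T<(1+\epsilon)am\}$ and the bounding sums $S_{\lceil(1-\epsilon)am\rceil}\le S_T\le S_{\lfloor(1+\epsilon)am\rfloor}$; your displayed sandwich drops the $a$, although the conclusion $S_T=a\log n+o_\Prob(\log n)$ shows you had the right thing in mind. Second, the ``elementary rescaling'' step — extending the WLLN from $\mu^{-1}\log n$ summands to $c\mu^{-1}\log n$ summands for a constant $c>0$ — is indeed true but deserves a word: for integer $c$ it is an immediate union bound over blocks, for $c<1$ one can use independence of $S_{cm}$ and $S_m-S_{cm}$ together with $S_m/\log n\to 1$ to force both pieces to be degenerate, and the general case follows by combining these (alternatively, one can appeal to the Gnedenko–Kolmogorov degenerate-convergence criterion, whose three conditions are all homogeneous of degree one in the row length $k_n$). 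This same step is used, without comment, in the paper's own proof where the WLLN is applied to the $m(\alpha)\approx am$ summands; so the gap, such as it is, is shared by the paper.
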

	
	The second lemma deals with
	the setting in Theorem \ref{th:cutoff-window}. Let us introduce a convenient shorthand notation: recalling $t_{\rm ent}$ and $t_{\rm w}$ from \eqref{eq:t-ent} and  \eqref{eq:tw}, respectively, set
	\begin{equation}\label{eq:def-t-star-beta}
		t_*(\beta)=
		t_{\rm ent}+\beta t_{\rm w}\comma\qquad \beta \in \R\fstop
	\end{equation}

	\begin{lemma}\label{lemma:main-window}
		Assume \eqref{eq:prod-condition2} and \eqref{eq:HP-noncutoff}.
		Then, for all  $\gamma\in \R$ and $\psi= \gamma \sqrt{\mu\log n}$,	we have
			\begin{equation}\label{eq:main2}
			\lim_{\beta\to-\infty}\liminf_{n\to \infty}	\Prob\bigg(|\zeta_{t_*(\beta)}(U)|> \frac{e^\psi}{n}\bigg)  = 1\comma\quad
			\lim_{\beta\to\infty}\limsup_{n\to \infty}	\Prob\bigg(|\zeta_{t_*(\beta)}(U)|> \frac{e^\psi}{n}\bigg)  = 0 \comma
		\end{equation}
		where   $t_*(\beta)$ is defined in  \eqref{eq:def-t-star-beta}. 
	\end{lemma}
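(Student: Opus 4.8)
The plan is to reduce the statement, via Lemma~\ref{lemma:size-pile} and \eqref{eq:main1}, to a second-moment (Chebyshev) estimate for a sum of i.i.d.\ terms indexed by a binomial number of summands. Writing $N:=\mu^{-1}\log n$ and $\rho=\sigma/\mu$, the starting point is
\begin{equation*}
	\Prob\Big(|\zeta_{t_*(\beta)}(U)|>\tfrac{e^\psi}n\Big)=\Prob\Big(\textstyle\sum_{s=1}^T\log Y_s<\log n-\gamma\sqrt{\mu\log n}\Big),
\end{equation*}
valid for $n$ large because \eqref{eq:prod-condition2} forces $\psi=\gamma\sqrt{\mu\log n}\ll\log n$, hence $\theta=e^\psi/n\in(0,1)$; here $T\overset{\rm d}{=}{\rm Bin}(t_*(\beta),\E[X]/n)$ is independent of the i.i.d.\ size-biased sizes $(Y_s)$. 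Using the explicit forms of $t_{\rm ent}$ and $t_{\rm w}$ together with $\mu\sqrt N=\sqrt{\mu\log n}$ and $\sigma\sqrt N=\rho\sqrt{\mu\log n}$, one computes $m:=\E[T]=N+\beta(1+\rho)\sqrt N$ and $\mu m-\log n=\beta(1+\rho)\sqrt{\mu\log n}$. Splitting $\sum_{s=1}^T\log Y_s=\mu T+W$ with $W:=\sum_{s=1}^T(\log Y_s-\mu)$, the event above rewrites as
\begin{equation*}
	\big\{[\mu(T-m)+(\gamma+\beta)\mu\sqrt N]+[W+\beta\sigma\sqrt N]<0\big\}.
\end{equation*}
The point of this rearrangement is that $\mu(T-m)$ and $W$ are uncorrelated and live on the distinct scales $\mu\sqrt{\var T}\lesssim\mu\sqrt N=\sqrt{\mu\log n}$ and $\sqrt{\var W}=\sqrt m\,\sigma\lesssim\sigma\sqrt N=\rho\sqrt{\mu\log n}$, while the two deterministic shifts $(\gamma+\beta)\mu\sqrt N$ and $\beta\sigma\sqrt N$ are tuned precisely to these two scales.

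Next I would record the elementary bounds $\var(T)\le\E[T]=m$ and, since $\E[\log Y_s-\mu]=0$, after conditioning on $T$, $\E[W]=0$ and $\var(W)=\E[W^2]=m\sigma^2$, together with $m\sim N$: this last point holds because \eqref{eq:HP-noncutoff} gives $\rho^2=\sigma^2/\mu^2\ll\log n/\mu=N$, so the correction $\beta(1+\rho)\sqrt N$ is $o(N)$ for fixed $\beta$; in particular $\var(T)\le m\le 2N$ and $\var(W)\le 2N\sigma^2$ for $n$ large. For the first limit, take $\beta<-2|\gamma|$, so that $\gamma+\beta\le\beta/2<0$; bounding the probability of the displayed event below by that of the intersection $\{\mu(T-m)+(\gamma+\beta)\mu\sqrt N<0\}\cap\{W+\beta\sigma\sqrt N\le 0\}$, a union bound on complements together with two Chebyshev estimates yields $\Prob(|\zeta_{t_*(\beta)}(U)|>e^\psi/n)\ge 1-C/\beta^2$ for all $n$ large; letting $n\to\infty$ and then $\beta\to-\infty$ gives the first claim. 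For the second limit, take $\beta>2|\gamma|$, so that $(\gamma+\beta)\mu\sqrt N+\beta\sigma\sqrt N\ge s:=\tfrac\beta2\mu\sqrt N+\beta\sigma\sqrt N>0$; the probability is then at most $\Prob(\mu(T-m)+W\le -s)\le\Prob(\mu(T-m)\le -s/2)+\Prob(W\le -s/2)\le C'/\beta^2$ by Chebyshev, using that $s/2$ retains both the $\tfrac\beta4\mu\sqrt N$ and the $\tfrac\beta2\sigma\sqrt N$ pieces; taking $\limsup_n$ and then $\beta\to\infty$ concludes.

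Two comments on this plan. First, only second moments enter, so nothing beyond \eqref{eq:prod-condition2} and \eqref{eq:HP-noncutoff} is needed; in particular the {\rm WLLN} \eqref{eq:WLLN} is not used (consistently with it being implied by \eqref{eq:HP-noncutoff}). Second, the degenerate case $\sigma=0$, admissible under \eqref{eq:HP-noncutoff}, requires no special treatment: then $W\equiv 0$ and $\rho=0$, the assertion concerns $\mu(T-m)$ alone, and in the second limit the retained strictly positive piece $\tfrac\beta4\mu\sqrt N$ of $s/2$ makes $\Prob(W\le -s/2)=\Prob(0\le -s/2)=0$ automatically. The only genuine (if mild) obstacle is the bookkeeping around a possibly diverging $\rho$: one cannot simply apply Chebyshev to the combined centred sum $\mu(T-m)+W$, whose standard deviation is $\asymp\sqrt{\mu\log n}\sqrt{1+\rho^2}$ and would not be controlled by a fixed multiple of $\sqrt{\mu\log n}$; it is essential to keep the two fluctuation scales, and their matching deterministic shifts, separate throughout — which is exactly why the cutoff window $t_{\rm w}$ in \eqref{eq:tw} carries the factor $1+\rho$.
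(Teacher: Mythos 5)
Your proof is correct, and it takes a genuinely more elementary route than the paper's. The paper establishes Lemmas~\ref{lemma:main-WLLN}, \ref{lemma:main-window}, and \ref{lemma:main} in a single unified argument: it first replaces the mixture over $T$ by an integral $\int_\R\varphi_n(\alpha)f_n(\alpha)\,\dd\alpha$, invokes a local CLT for the integer-valued binomial $T$ to upgrade $\varphi_n$ to the standard Gaussian density $\varphi$ (via Scheff\'e), and only then applies Chebyshev to $f_n(\alpha)$ for each fixed $\alpha$. This machinery is there because it is needed for the sharp profile in Lemma~\ref{lemma:main}; for Lemma~\ref{lemma:main-window} it is overkill. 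You instead split $\sum_{s=1}^T\log Y_s$ into the uncorrelated pieces $\mu(T-m)$ and $W=\sum_{s=1}^T(\log Y_s-\mu)$, match each against the corresponding deterministic shift of order $\mu\sqrt N$ and $\sigma\sqrt N$ respectively, and handle each by Chebyshev plus a union bound, yielding the same $O(1/\beta^2)$ tail. This avoids the local CLT entirely and makes the role of the two fluctuation scales — and hence the $(1+\rho)$ factor in $t_{\rm w}$ — transparent; it is a cleaner, self-contained proof of this particular lemma. Your side remarks are also accurate: the computation $\E[T]=N+\beta(1+\rho)\sqrt N$ and $\mu\,\E[T]-\log n=\beta(1+\rho)\sqrt{\mu\log n}$ is exactly right, \eqref{eq:HP-noncutoff} is precisely what guarantees $m\sim N$, and the $\sigma=0$ case indeed needs no special treatment since the strictly positive piece $\tfrac{\beta}{4}\mu\sqrt N$ of $s/2$ trivializes the $W$-Chebyshev.
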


	The third lemma is concerned with the setting in Theorem \ref{th:profile1}. Here, we shall exploit Lemma \ref{lemma:main} which deals with a joint CLT for both variables $T$ and $\log Y$  on the right-hand side \eqref{eq:main1}. This suggests the convolution-type form of the limiting probabilities that we find in \eqref{eq:Xi} below. Observe that this limit profile coincides with the one in Theorem \ref{th:profile1} (see Remark \ref{rem:limit-Psi}).
		\begin{lemma}\label{lemma:main}
			In the setting of Theorem \ref{th:profile1}, 
			for all $\gamma\in \R$ and $\psi= \gamma \sqrt{\mu\log n}$,	we have
				\begin{align}
					&\lim_{n\to \infty}	\left|\Prob\bigg(|\zeta_{t_*(\beta)}(U)|> \frac{e^\psi}{n}\bigg)- \Xi_{\varrho}(\beta,\gamma) \right|=0\comma\qquad \beta \in \R\comma
				\end{align}
				where 	$t_*(\beta)$ is given in \eqref{eq:def-t-star-beta}, and
				\begin{equation}\label{eq:Xi} \Xi_{\varrho}(\beta,\gamma)=
					\int_{-\infty}^\infty \varphi(\alpha)\,	\Phi(-\tfrac1\varrho\left(\alpha+\beta(1+\varrho)+\gamma\right))\, \dd \alpha\fstop
				\end{equation}
				Here, $\varphi$ is the pdf of a standard Gaussian. As a convention,  we set $\Xi_{0}(\beta,\gamma)=\Phi(-\beta)$.
			\end{lemma}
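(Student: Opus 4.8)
\emph{Proof plan.} The starting point is the exact representation of Lemma~\ref{lemma:size-pile}: by \eqref{eq:main1} with $\theta=e^\psi/n$, the quantity to be estimated equals $\Prob\big(\sum_{s=1}^{T}\log Y_s<\log n-\psi\big)$, where $T\overset{\rm d}{=}{\rm Bin}\big(t_*(\beta),\tfrac{\E[X]}n\big)$, the $(Y_s)_{s\ge1}$ are i.i.d.\ with law $p_Y$, and $T$ is independent of $(Y_s)_{s\ge1}$. I would write $N:=\mu^{-1}\log n$, so that $\log n=\mu N$ and $\psi=\gamma\sqrt{\mu\log n}=\gamma\mu\sqrt N$; by \eqref{eq:prod-condition2} one has $N\to\infty$, and $\varrho^2=\sigma^2/\mu^2\ll\log n/\mu=N$ by \eqref{eq:HP-noncutoff} (trivially in scenario~1). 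From \eqref{eq:t-ent} and \eqref{eq:tw} a direct computation gives $\E[T]=\tfrac{\E[X]}n\,t_*(\beta)=N+\beta(1+\varrho)\sqrt N$, and one should first record that \eqref{eq:prod-condition2} forces $\E[X]=o(n)$: if $\E[X]\ge\varepsilon n$ along a subsequence, then for $n$ large $\E[X\,\IND_{X>\sqrt n}]\ge\tfrac12\E[X]$ there, so $\mu=\E[X\log X]/\E[X]\ge\tfrac14\log n$, contradicting \eqref{eq:prod-condition2}. Hence $\var(T)=\E[T]\big(1-\tfrac{\E[X]}n\big)=N(1+o(1))$.

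The heart of the argument is a pair of Gaussian approximations. First, put $Q_n:=(T-\E[T])/\sqrt N$; since $T$ is binomial with $\var(T)\sim N\to\infty$, Berry--Esseen together with P\'olya's theorem give $\sup_{x\in\R}|\Prob(Q_n\le x)-\Phi(x)|\to0$. Second (needed only in scenario~2), put $R_n:=(\sigma\sqrt N)^{-1}\sum_{s=1}^{T}(\log Y_s-\mu)$; the hypothesis \eqref{eq:CLT} says precisely $(\sigma\sqrt N)^{-1}\sum_{s=1}^{N}(\log Y_s-\mu)\Rightarrow\mathcal N(0,1)$, and by Remark~\ref{rem:lindeberg} the array $\log Y$ obeys the Lindeberg condition \eqref{eq:lindeberg}. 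I would then transfer this CLT from the deterministic length $N$ to the random length $T$ by an Anscombe-type argument: for any deterministic $m'(n)$ with $m'/N\to1$ the same Lindeberg condition still yields $(\sigma\sqrt N)^{-1}\sum_{s=1}^{m'}(\log Y_s-\mu)\Rightarrow\mathcal N(0,1)$, while the $|T-N|=O_{\mathds P}(\sqrt N)$ summands lying between $N$ and $T$ contribute only $O_{\mathds P}(N^{1/4}\sigma)=o_{\mathds P}(\sqrt N\sigma)$ (this is where $\sigma\ll\mu\sqrt N$, i.e.\ \eqref{eq:HP-noncutoff}, is used). Since $T\perp(Y_s)_{s\ge1}$ and $T/N\to1$ in probability, conditioning on $T$ then gives $R_n\Rightarrow\mathcal N(0,1)$ and, jointly, $(Q_n,R_n)\Rightarrow(W_1,W_2)$ with $W_1,W_2$ independent standard normals.

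To conclude, divide the defining inequality by $\mu\sqrt N$: using $\log n-\psi=\mu(N-\gamma\sqrt N)$ and $\E[T]-N=\beta(1+\varrho)\sqrt N$, the event $\{\sum_{s=1}^{T}\log Y_s<\log n-\psi\}$ becomes $\{Q_n+\varrho R_n<-\gamma-\beta(1+\varrho)\}$, i.e.\ $\{S_n<\theta_n\}$ for $S_n:=(Q_n+\varrho R_n)/\sqrt{1+\varrho^2}$ and $\theta_n:=-(\gamma+\beta(1+\varrho))/\sqrt{1+\varrho^2}$. From $(Q_n,R_n)\Rightarrow(W_1,W_2)$ — with a subsequence/Slutsky argument to cover a possibly unbounded $\varrho=\varrho(n)$, writing $S_n=\tfrac1{\sqrt{1+\varrho^2}}Q_n+\tfrac{\varrho}{\sqrt{1+\varrho^2}}R_n$ and noting the first term tends to $0$ in probability — one gets $S_n\Rightarrow\mathcal N(0,1)$. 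As $(\theta_n)_n$ is bounded and the limit CDF is continuous, P\'olya's theorem gives $\Prob(S_n<\theta_n)=\Phi(\theta_n)+o(1)$; and a one-line Gaussian identity (write $\Phi$ as the CDF of an independent standard normal and integrate out $W_2$) shows $\Phi\big(-(\gamma+\beta(1+\varrho))/\sqrt{1+\varrho^2}\big)=\int_{-\infty}^\infty\varphi(\alpha)\,\Phi\big(-\tfrac1\varrho(\alpha+\beta(1+\varrho)+\gamma)\big)\dd\alpha=\Xi_{\varrho}(\beta,\gamma)$. Evaluating at $\varrho=\varrho(n)$ settles scenario~2. In scenario~1 ($\sigma\equiv0$) one has $\sum_{s=1}^{T}\log Y_s=\mu T$, so \eqref{eq:main1} reduces to a pure binomial tail and the claim follows from the CLT for $T$ alone, in accordance with the convention fixing $\Xi_0$.

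The step I expect to be the real obstacle is the random-index CLT of the second paragraph: \eqref{eq:CLT} is assumed only for the \emph{deterministic} number $N$ of summands, and upgrading it to $\sum_{s=1}^{T}$ in a genuinely triangular setting (the law of $\log Y$, and the scales $\mu,\sigma,N,\varrho$, all vary with $n$) is exactly what makes the Lindeberg reformulation of Remark~\ref{rem:lindeberg} and the quantitative slack \eqref{eq:HP-noncutoff} indispensable. A secondary but real difficulty is keeping every Gaussian approximation uniform in $\varrho=\varrho(n)$ — in particular through the regime $\varrho\to\infty$ — so that the conclusion can be stated as an $|\,\cdot\,|\to0$ assertion without presupposing convergence of $\Xi_{\varrho(n)}(\beta,\gamma)$; this is the reason for invoking the P\'olya (uniform) form of the CLTs throughout.
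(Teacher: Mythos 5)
Your proposal is correct, but it takes a genuinely different route from the paper's. The paper's proof (it handles Lemmas~\ref{lemma:main-WLLN}, \ref{lemma:main-window} and \ref{lemma:main} in a unified way) is analytic in flavor: starting from the same identity \eqref{eq:tail}, it changes variables $m\mapsto m(\alpha)=\lfloor\E[T]+\alpha\sqrt{\var(T)}\rfloor$, invokes a \emph{local} CLT for the binomial $T$ (citing \cite{tao_blog_CLT}) together with Scheff\'e's lemma to replace the discrete kernel $\Prob(T=m(\alpha))\sqrt{\var(T)}$ by the standard Gaussian density $\varphi$, and then establishes the pointwise limit $f_n(\alpha)-g_n(\alpha)\to 0$ for a.e.\ $\alpha$ (with $g_n$ the explicit Gaussian CDF appearing in $\Xi_\rho$) by applying \eqref{eq:CLT} with the deterministic index $m(\alpha)$. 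You instead prove a \emph{joint} weak convergence $(Q_n,R_n)\Rightarrow(W_1,W_2)$, via Berry--Esseen for $T$ and an Anscombe-type random-index transfer for the $\log Y$-sum, and close with Slutsky plus a subsequence trick to handle a possibly unbounded $\varrho=\varrho(n)$. The two routes call on the hypothesis \eqref{eq:HP-noncutoff} in exactly the same place --- the paper to move the index from $m(\alpha)$ to $N=\mu^{-1}\log n$, you to control the excess summands between $N$ and $T$ --- but the paper's device of conditioning on $T$ first keeps all indices deterministic, so it never needs a random-index CLT, at the price of a local CLT for $T$; your route avoids the local CLT but requires the Anscombe argument you rightly flag as the delicate step.

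One small inaccuracy in that step: you write $|T-N|=O_{\mathds P}(\sqrt N)$, but since $\E[T]-N=\beta(1+\varrho)\sqrt N$, the deterministic drift dominates when $\varrho\to\infty$ and the correct bound is $|T-N|=O_{\mathds P}\big((1+\varrho)\sqrt N\big)$. The second moment of $R_n-R_n'$ is then $\E|T-N|/N=O\big((1+\varrho)/\sqrt N\big)$, which tends to $0$ precisely under $\varrho\ll\sqrt N$, i.e.\ \eqref{eq:HP-noncutoff} --- so your conclusion and your identification of where \eqref{eq:HP-noncutoff} enters are both right, but the parenthetical ``$O_{\mathds P}(N^{1/4}\sigma)=o_{\mathds P}(\sqrt N\sigma)$'' as stated holds trivially without \eqref{eq:HP-noncutoff}; the hypothesis is actually needed to get $|T-N|\ll N$, not to get $\sqrt{|T-N|}\ll\sqrt N$. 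This is a slip in the bookkeeping, not a gap in the argument.
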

			
			\begin{remark}\label{rem:limit-Psi} Although  $ \Xi_\rho(\beta,\gamma)$ and the cutoff profile in Theorem \ref{th:profile1} look different at first sight, they are essentially the same function. Indeed, a simple manipulation with convolution of two  Gaussian pdf's yields
				\begin{equation}
					\Xi_\rho(\beta,\gamma)= \Phi\bigg(-\frac{\beta\left(1+\rho\right)+\gamma}{\sqrt{1+\rho^2}}\bigg)\comma
				\end{equation}
				which, indeed, coincides with the cutoff profile found in Theorem \ref{th:profile1} after taking $\gamma\to 0$. 
			\end{remark}
			
We shall prove the three above lemmas in a unified way.
	\begin{proof}[Proof of Lemmas \ref{lemma:main-WLLN}, \ref{lemma:main-window} and \ref{lemma:main}]
	Recall from Lemma \ref{lemma:size-pile} that, for all $t\ge 0$, we have
	\begin{equation}\label{eq:tail}
		\Prob\bigg(|\zeta_t(U)|> \frac{e^\psi}{n}\bigg)
		= \sum_{m=0}^t \Prob\big(T=m\big)\, \Prob\bigg(\sum_{s=1}^m\log Y_s < \log n-\psi\bigg)\comma
	\end{equation}
	with $T\overset{\rm d}= {\rm Bin}(t,\frac{\E[X]}{n})$,  $Y \sim p_Y$ (cf.\  \eqref{eq:p_Y}), and all random variables are independent.
		Now, introduce the following piecewise constant functions of $\alpha \in \R$:
	\begin{equation}\label{eq:notationCLT}
			m(\alpha)\coloneqq \lfloor \E[T]+\alpha \sqrt{{\rm Var}(T)} \rfloor\comma
			\end{equation}
			\begin{equation}\label{eq:phi-n-f-n}
			\varphi_n(\alpha)\coloneqq \Prob(T=m(\alpha))\sqrt{{\rm Var}(T)}\comma\qquad
			f_n(\alpha)\coloneqq \Prob\left(\sum_{s=1}^{m(\alpha)}\log Y_s<\log n-\psi\right)\fstop
	\end{equation}
	With these definitions, the right-hand side of \eqref{eq:tail} becomes
	\begin{equation}
	\sum_{m=0}^t \Prob\big(T=m\big)\, \Prob\bigg(\sum_{s=1}^m\log Y_s < \log n-\psi\bigg)=	\int_\R \varphi_n(\alpha)\, f_n(\alpha)\, \dd \alpha \fstop
	\end{equation} 
	
	By condition \eqref{eq:prod-condition2}, we have  $\frac{\E[X]}{n}\ll 1$; hence, provided that 	$t\frac{\E[X]}{n}\gg 1$, 
	\begin{equation}\label{eq:var-exp-1}
	{\rm Var}(T)\sim	\E[T]= \frac{t\,\E[X]}{n}\gg 1	\fstop
	\end{equation}
	Moreover, since $T$ is a sum of integer-valued i.i.d.\ random variables, the following local CLT holds true (see, e.g., \cite[Theorem 7]{tao_blog_CLT}): recalling $\varphi_n$ in \eqref{eq:phi-n-f-n} and that $\varphi:\R\to \R_+$ denotes the pdf of a standard Gaussian, 
	\begin{equation}\label{eq:pointwise}
		\sup_{\alpha\in \R}|\varphi_n(\alpha)-\varphi(\alpha)|\ll 1\fstop
	\end{equation}
	Hence, since $\int_\R \varphi_n=\int_\R \varphi=1$, Scheffe's lemma, \eqref{eq:pointwise} and $|f_n|\le 1$ yield
	\begin{equation}\label{eq:scheffe}
	\int_\R \varphi_n(\alpha)\,f_n(\alpha)\, \dd \alpha\sim \int_\R \varphi(\alpha)\,f_n(\alpha)\, \dd \alpha\fstop
	\end{equation} 
	We are thus left to establishing limits for the right-hand side above. In particular, since $\varphi$ is integrable and smooth, and $|f_n|\le 1$, it suffices to determine $\liminf_{n\to \infty}$ and $\limsup_{n\to \infty}$, for a.e.\  $\alpha \in \R$, of
		\begin{equation}\label{eq:fn}
		f_n(\alpha) = \Prob\bigg(\sum_{s=1}^{m(\alpha)}(\log Y_s-\mu)<\log n-\mu\, m(\alpha)-\psi\bigg) \fstop
	\end{equation}  We divide the rest of the proof into three cases, one for  each lemma. We emphasize that, for all three definitions of $t$, the condition in \eqref{eq:var-exp-1} (and, thus, \eqref{eq:scheffe}) is automatically verified.
	
	\smallskip \noindent
	\textit{Case of Lemma \ref{lemma:main-WLLN}.} Fix $t=at_{\rm ent}$ with $a >0$, and $0<\psi \ll \log n$. Then, the expression in \eqref{eq:fn} is
 is estimated, for all $n\in \N$ sufficiently large, $\alpha \in \R$,	 and  $C=C(a,\alpha)> 0$, by
	\begin{align}
		f_n(\alpha)&\ge \Prob\bigg(\sum_{s=1}^{m(\alpha)}(\log Y_s-\mu)<(1-a)\log n-C\sqrt{\log n}-\psi\bigg)\\
		f_n(\alpha)&\le 
			\Prob\bigg(\sum_{s=1}^{m(\alpha)}(\log Y_s-\mu)<(1-a)\log n+ C\sqrt{\log n}-\psi\bigg)\fstop
	\end{align}
By the WLLN assumption in \eqref{eq:WLLN} and the fact that $(\log Y_s)_{s=1}^{m(\alpha)}$ are i.i.d.,  this proves that, for all $\alpha\in \R$,
\begin{equation}
	f_n(\alpha)\xrightarrow{n\to \infty}\begin{dcases}
		1 &\text{if}\ a <1\\
		0 &\text{if}\ a>1\comma
	\end{dcases}
\end{equation}
thus, establishing the desired claim for this case.

\smallskip \noindent
\textit{Case of Lemma \ref{lemma:main-window}.} Recall \eqref{eq:def-t-star-beta}. Fix $t=t_*(\beta)=t_{\rm ent}+\beta t_{\rm w}$ with $\beta \in \R$, and $\psi = \gamma\sqrt{\mu \log n}$ with $\gamma \in \R$. Thanks to the assumption \eqref{eq:HP-noncutoff}, the expression in \eqref{eq:fn} is estimated, for all  $\alpha,\beta,\gamma \in \R$,   $n\in \N$ sufficiently large and some $1\ll\xi\ll \sqrt{\mu\log n}$, by
\begin{align}
	f_n(\alpha)&\ge\Prob\bigg(\sum_{s=1}^{m(\alpha)}(\log Y_s-\mu)<-\left(\beta\left(1+\frac{\sigma}{\mu}\right)+\alpha+\gamma\right)\sqrt{\mu \log n}-\xi\bigg) \\
	f_n(\alpha)&\le\Prob\bigg(\sum_{s=1}^{m(\alpha)}(\log Y_s-\mu)<-\left(\beta\left(1+\frac{\sigma}{\mu}\right)+\alpha+	\gamma\right)\sqrt{\mu \log n}+\xi\bigg)\fstop
\end{align}
By Chebyshev inequality, we further get
\begin{align}
f_n(\alpha)&\ge 1-\frac8{\beta^2}\bigg(\frac{\frac\sigma\mu}{1+\frac\sigma\mu}\bigg)^2\ge 1-\frac8{\beta^2}\comma &&\beta <-2(\alpha+\gamma)\\
f_n(\alpha)&\le \frac8{\beta^2}\bigg(\frac{\frac\sigma\mu}{1+\frac\sigma\mu}\bigg)^2\le \frac8{\beta^2}\comma&&\beta >2 (\alpha+\gamma)\fstop
\end{align}
Hence, taking $\beta\to -\infty$ and $\beta\to \infty$, respectively, yields the desired claim for this case.

\smallskip \noindent
\textit{Case of Lemma \ref{lemma:main}.} Let	 
	\begin{equation}
		g_n(\alpha)\coloneqq \begin{dcases}
			\IND\{\alpha+\beta+\gamma<0\} &\text{if}\ \sigma = 0\\
			\Phi(-\mu/\sigma\left(\alpha+\beta\left(1+\sigma/\mu\right)+\gamma\right)) &\text{if}\ \sigma >0\fstop
		\end{dcases}
	\end{equation}
	By setting $t=t_*(\beta)$ and $\psi=\gamma \sqrt{\mu \log n}$ as in the previous case, the assumption in \eqref{eq:HP-noncutoff} ensures that, for all $\alpha,\beta,\gamma \in \R$ such that $\alpha+\beta+\gamma \neq 0$,
	\begin{equation}
		f_n(\alpha) \sim \begin{dcases}
			\IND\{\alpha+\beta+\gamma<0\} &\text{if}\ \sigma=0\\
			 \Prob\bigg(\sqrt{\frac\mu{\log n}}\sum_{s=1}^{\frac{\log n}{\mu}}\frac{\log Y_s-\mu}{\sigma}<-L(\alpha,\beta,\gamma)\bigg)&\text{if}\ \sigma> 0\comma
		\end{dcases}
	\end{equation}
	with
	\begin{equation}
	 L(\alpha,\beta,\gamma)= \frac\mu\sigma\left(\beta\left(1+\frac{\sigma}{\mu}\right)+\alpha+\gamma\right)\fstop
	\end{equation}Hence, the CLT assumption in \eqref{eq:CLT} yields, for all $\alpha,\beta, \gamma \in \R$ satisfying $\alpha+\beta +\gamma \neq 0$,	 
	\begin{equation}
		f_n(\alpha)-g_n(\alpha)\xrightarrow{n\to \infty}0\fstop
	\end{equation}
	This concludes the proof of the third case.	
\end{proof}

	We turn to a discussion of general upper and lower bounds for $\E[d_{\rm TV}(t)]$,
	which will be used several times in our proofs. The upper bound is based on an idea from \cite{chatterjee2020phase}, while the lower bound uses explicitly 
the pile dynamics introduced in Section \ref{sec:piles-general}.
		\subsection{A general upper bound for $\E[d_{\rm TV}(t)]$}\label{suse:general-ub}
	Given a fixed time $t$,  we consider a sub-probability distribution $\tilde \eta_t\le \eta_t$, obtained by removing some of the piles from the current configuration $\eta_t$. The precise definition of $\tilde \eta_t$ will be specified in the applications and it is not important at this stage.
Next, we let the system evolve from time $t$ to time $t+s$, for some fixed time $s$, with initial  distribution $\tilde \eta_t$. 
This can be coupled to $\eta_{t+s}$ by using the same updates in the time interval $[t,t+s]$. 
By the triangle inequality and the fact that $|\eta_r|=1$ for all $r\ge 0$, we get
		\begin{align}\label{eq:ub-step1}
		 \|\eta_{t+s}-\pi\|_{\rm TV}
			&\le \frac12\left\{\sum_{x\in V}\left|\eta_{t+s}(x)-\frac1n- \left(\tilde \eta_{t+s}(x)-\frac{|\tilde \eta_t|}{n}\right)\right| + \sum_{x\in V}\left|\tilde \eta_{t+s}(x)-\frac{|\tilde \eta_t|}{n}\right|\right\}\\
			&= 1-|\tilde \eta_t| + |\tilde \eta_t|\left\|\frac{\tilde \eta_{t+s}}{|\tilde \eta_t|}-\pi\right\|_{\rm TV}
			\le 1-|\tilde \eta_t|+\frac{|\tilde \eta_t|}2\left\|\frac{\tilde \eta_{t+s}}{|\tilde \eta_t|\pi}-1\right\|_2\fstop
		\end{align}
	Taking expectation on both sides,  letting $\cF_t$ denote the $\sigma$-algebra up to time $t$, Jensen inequalities and the $L^2$-bound in Proposition \ref{prop:L2}  yield
		\begin{align}
			\E[\|\eta_{t+s}-\pi\|_{\rm TV}]&\le 1-\E[|\tilde \eta_t|]+\frac12\, \E\bigg[|\tilde \eta_t|\E\bigg[\bigg\|\frac{\tilde \eta_{t+s}}{|\tilde \eta_t|\pi}-1\bigg\|_2\bigg|\cF_t\bigg]\bigg]\\
			&\le 1-\E[|\tilde \eta_t|] + \frac12 \bigg(1-\frac1{t_{\rm rel}}\bigg)^{s/2}\E\bigg[|\tilde \eta_t|\bigg\|\frac{\tilde \eta_t}{|\tilde \eta_t|\pi}-1\bigg\|_2\bigg]\\
			&\le 1-\E[|\tilde \eta_t|]+\frac{e^{-s/(2t_{\rm rel})}} 2\, n\, \E\Big[\max_{x\in V}\tilde \eta_t(x)\Big]\ .
		\end{align}
				For later reference, we rephrase the above argument in terms of a lemma.		
		\begin{lemma}\label{lemma:triangle-ineq-ub} 
			For all initial sub-distributions $0\le \xi \le \eta_0$ and times $s\ge 0$, we have
			\begin{equation}
				\E[\|\eta_s-\pi\|_{\rm TV}]\le 1-\E[|\xi|] + \exp\left(-s/2t_{\rm rel}\right) n\, \E\left[\|\xi\|_\infty\right]\ .
			\end{equation}
			
		\end{lemma}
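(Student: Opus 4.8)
The plan is to follow the computation that was already sketched right before the statement, packaging it as a clean lemma. First I would fix a time horizon, split it as $t+s$ with $t=0$, so that $\eta_0$ is the initial (Dirac) mass and $\xi$ is a sub-distribution dominated by it; the general case of starting from time $t$ with $\tilde\eta_t$ follows verbatim by conditioning on $\cF_t$ and is exactly what the displayed argument does, so for the lemma statement it suffices to treat the one-step-coupled form. Run the same updates $A_1,\dots,A_s$ on both the true mass $\eta$ started from $\eta_0$ and on the (sub-)mass started from $\xi$, producing $\eta_s$ and $\xi_s$ with $\xi_s\le\eta_s$ pointwise and $|\xi_s|=|\xi|$ conserved. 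By the triangle inequality, writing $c=|\xi|$,
\begin{equation}
\|\eta_s-\pi\|_{\rm TV}\le \tfrac12\sum_{x}\bigl|(\eta_s-\pi)(x)-(\xi_s(x)-c/n)\bigr|+\tfrac12\sum_x\bigl|\xi_s(x)-c/n\bigr|.
\end{equation}
The first sum equals $1-c$ because $(\eta_s-\xi_s)(x)\ge0$ sums to $1-c$ and $(\pi-c\pi)(x)\ge0$ sums to $1-c$, and the two have the same total, so the absolute value can be dropped; the second sum is $c\,\|\xi_s/c-\pi\|_{\rm TV}$.

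Next I would bound the second term by $L^2$: Cauchy--Schwarz gives $\|\xi_s/c-\pi\|_{\rm TV}\le\tfrac12\|\xi_s/(c\pi)-1\|_2$. Taking $\E$ and applying Proposition \ref{prop:L2} to the process started from the probability vector $\xi/c$ yields
\begin{equation}
\E\Bigl[\bigl\|\tfrac{\xi_s}{c\pi}-1\bigr\|_2^2\Bigr]=\Bigl(1-\tfrac1{t_{\rm rel}}\Bigr)^{s}\,\Bigl\|\tfrac{\xi}{c\pi}-1\Bigr\|_2^2\le\Bigl(1-\tfrac1{t_{\rm rel}}\Bigr)^{s}\,\bigl\|\tfrac{\xi}{c\pi}\bigr\|_2^2,
\end{equation}
where I dropped the $-1$ using that $\|v-1\|_2^2=\|v\|_2^2-1\le\|v\|_2^2$ for $v$ with $\E_\pi v=1$. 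Now $\|\xi/(c\pi)\|_2^2=\sum_x\pi(x)(\xi(x)/(c\pi(x)))^2=\tfrac{n}{c^2}\sum_x\xi(x)^2\le\tfrac{n}{c^2}\|\xi\|_\infty\sum_x\xi(x)=\tfrac{n}{c}\|\xi\|_\infty$. Combining via Jensen ($\E\sqrt{\cdot}\le\sqrt{\E\cdot}$) and $(1-1/t_{\rm rel})^{s/2}\le e^{-s/2t_{\rm rel}}$, the contribution of the second term is at most $\tfrac12\cdot c\cdot e^{-s/2t_{\rm rel}}\sqrt{n\|\xi\|_\infty/c}$; but it is cleaner—and is what the preceding display does—to keep $\|\xi_s/(c\pi)-1\|_2$ inside and factor $c$ out, landing on $\tfrac12 e^{-s/2t_{\rm rel}} n\,\E[\|\xi\|_\infty]$ after using $c\cdot\tfrac{n}{c}\|\xi\|_\infty=n\|\xi\|_\infty$ (here one should be slightly careful and do the $\cF_t$-conditioning version so that $|\xi|$ and $\|\xi\|_\infty$, which are $\cF_t$-measurable, come out of the expectation correctly; in the $t=0$ form $\xi$ is deterministic so this is immediate). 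Adding $\E[1-|\xi|]=1-\E[|\xi|]$ from the first term gives the claimed bound.

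The main (and really only) obstacle is bookkeeping rather than mathematics: making sure the coupling is set up so that $\xi_s\le\eta_s$ and $|\xi_s|=|\xi|$ genuinely hold step by step (each $\varPsi_{A}$ is linear and monotone and preserves total mass on any nonnegative vector, so this is automatic), and handling the normalization by $|\xi|$ when $|\xi|$ is random and $\cF_t$-measurable so that Proposition \ref{prop:L2}, which is stated for probability vectors, applies to $\xi/|\xi|$ conditionally on $\cF_t$. Once those points are checked, the estimate is exactly the chain of inequalities displayed just before Lemma \ref{lemma:triangle-ineq-ub}, specialized to $t=0$, and the proof is complete.
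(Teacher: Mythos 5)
Your proposal is correct and follows the paper's argument almost line by line: the same triangle-inequality decomposition into a mass-loss term $1-|\xi|$ plus a residual TV term, the same $L^1\le L^2$ step, Jensen plus Proposition \ref{prop:L2} to contract the $L^2$ distance, and finally a pointwise bound on $\|\xi/(|\xi|\pi)-1\|_2$. One small bookkeeping note: the paper closes with the $L^\infty$ bound $\|\xi/(|\xi|\pi)-1\|_2\le \|\xi/(|\xi|\pi)-1\|_\infty\le n\|\xi\|_\infty/|\xi|$, whereas you also compute the slightly sharper $L^2$ bound $\|\xi/(|\xi|\pi)\|_2^2\le n\|\xi\|_\infty/|\xi|$ (yielding $\sqrt{|\xi|\,n\,\|\xi\|_\infty}$ after multiplying by $|\xi|$, which is indeed $\le n\|\xi\|_\infty$ since $\|\xi\|_\infty\ge|\xi|/n$) — your phrase ``$c\cdot\tfrac{n}{c}\|\xi\|_\infty$'' is the $L^\infty$ route, not the square-rooted $L^2$ one, so be careful not to conflate the two; either route gives the stated inequality. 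Also, the first sum in the triangle-inequality display is only $\le 1-|\xi|$, not equal to it (the paper's ``$=$'' is likewise a harmless overstatement), but since we only need an upper bound this does not affect anything.
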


	\subsection{A general lower bound for $\E[d_{\rm TV}(t)]$}\label{suse:general-lb}
	Consider the process $(\eta_t)_{t\ge 0}$ and the corresponding pile dynamics $(\zeta_t)_{t\ge 0}$. For a fixed time  $t\ge 0$, $a>1$, let  $w_t=w_t^a$ denote the mass distribution 
	obtained after the removal of all piles having mass smaller than $\frac{a}{n}$.

\begin{lemma}\label{lem:glb}	For all $t\ge 0$, $a>1$, and $\eps \in (0,1)$, almost surely
	\begin{equation}
		\|\eta_t-\pi\|_{\rm TV}
\ge
		\IND\{|w_t|>1-\eps\} \left(1-\eps -a^{-1}\right)\fstop
		\label{eq:lb}
	\end{equation}
Moreover, 
\begin{equation}\label{eq:general-lb}
	\E[\|\eta_t-\pi\|_{\rm TV}]\ge \big(1-\eps^{-1}\Prob(|\zeta_t(U)|<\tfrac{a}n)\big)(1-\eps-a^{-1})\fstop
\end{equation}
\end{lemma}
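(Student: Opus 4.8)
The plan is to prove the two assertions of Lemma \ref{lem:glb} separately, starting with the almost-sure bound \eqref{eq:lb}, and then deducing the expectation bound \eqref{eq:general-lb} from it. For \eqref{eq:lb}, first I would observe that on the event $\{|w_t|>1-\eps\}$ the mass distribution $\eta_t$ consists of at most $n$ ``large'' piles (each of mass $\geq a/n$) plus a remainder of mass $<\eps$; since the large piles each have mass at least $a/n$, there can be at most $n/a$ of them, so the support of $w_t$ has size at most $n/a$. I would then use the standard variational characterization $\|\eta_t-\pi\|_{\rm TV}=\max_{B\subset V}(\eta_t(B)-\pi(B))$ and test it on the set $B=\supp(w_t)$: one gets $\eta_t(B)\geq |w_t|>1-\eps$ while $\pi(B)=|B|/n\leq a^{-1}$, so $\|\eta_t-\pi\|_{\rm TV}\geq (1-\eps)-a^{-1}$. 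Multiplying by the indicator $\IND\{|w_t|>1-\eps\}$ (and using that total variation distance is nonnegative when the indicator vanishes) gives \eqref{eq:lb}.

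For the expectation bound \eqref{eq:general-lb}, the idea is to take expectations in \eqref{eq:lb} and lower bound $\E[\IND\{|w_t|>1-\eps\}]=\Prob(|w_t|>1-\eps)$ from below. By definition, $1-|w_t|$ is the total mass carried by piles of size $<a/n$, which in the notation of Section \ref{sec:piles} is exactly $|\eta_t^{\cA}|$ with $\cA=[0,a/n)$. Applying the pile-chunk duality \eqref{eq:pile-size-ex-mass}, namely $\Prob(|\zeta_t(U)|\in\cA)=\E[|\eta_t^{\cA}|]$, together with Markov's inequality applied to the nonnegative random variable $1-|w_t|=|\eta_t^{\cA}|$, yields
\begin{equation}
\Prob(|w_t|\leq 1-\eps)=\Prob(|\eta_t^{\cA}|\geq \eps)\leq \frac{\E[|\eta_t^{\cA}|]}{\eps}=\frac{\Prob(|\zeta_t(U)|<\tfrac{a}{n})}{\eps}\fstop
\end{equation}
Hence $\Prob(|w_t|>1-\eps)\geq 1-\eps^{-1}\Prob(|\zeta_t(U)|<\tfrac{a}{n})$. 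Taking $\E$ of \eqref{eq:lb} and inserting this bound gives \eqref{eq:general-lb}, since the factor $(1-\eps-a^{-1})$ is deterministic (and the bound is only informative when it is positive, in which case no issue with signs arises).

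The main obstacle — really the only place requiring care — is the combinatorial step that on $\{|w_t|>1-\eps\}$ the number of large piles, and hence $|\supp(w_t)|$, is at most $n/a$: this is what converts the ``heavy mass'' statement into the small-measure test set needed for the total-variation lower bound. Everything else is a routine application of the duality identity \eqref{eq:pile-size-ex-mass} and Markov's inequality. One should also be slightly careful to note that $w_t\leq \eta_t$ pointwise so that $\eta_t(\supp(w_t))\geq |w_t|$, and that when the indicator vanishes the right-hand side of \eqref{eq:lb} is zero while the left-hand side is nonnegative, so the inequality holds trivially there.
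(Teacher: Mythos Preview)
Your proof is correct and follows essentially the same approach as the paper. The only cosmetic difference is that for \eqref{eq:lb} the paper writes $\|\eta_t-\pi\|_{\rm TV}=\sum_{x}[\eta_t(x)-\tfrac1n]_+\ge\sum_x[w_t(x)-\tfrac1n]_+=|w_t|-\tfrac1n|\supp(w_t)|$ and then bounds $|\supp(w_t)|\le n/a$, whereas you phrase the same computation via the variational formula $\|\eta_t-\pi\|_{\rm TV}=\max_B(\eta_t(B)-\pi(B))$ tested at $B=\supp(w_t)$; the derivation of \eqref{eq:general-lb} via Markov's inequality and the duality \eqref{eq:pile-size-ex-mass} is identical.
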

\begin{proof}
Note that 
\begin{equation}
	\|\eta_t-\pi\|_{\rm TV}=\sum_{x\in V}[\eta_t(x)-\tfrac1n]_+\ge \sum_{x\in V}[w_t(x)-\tfrac1n]_+=|w_t|
	-\tfrac1n|\{y\mid w_t(y)\ge\tfrac an \}.
\end{equation}
On the other hand,
$
|\{y\mid w_t(y)\ge \tfrac an \} |\le \frac na$. 
Thus,	restricting to the event $|w_t|>1-\eps$ yields the claim \eqref{eq:lb}.
	Next, by  Markov inequality, and using 
	\eqref{eq:pile-size-ex-mass},
\begin{equation}
	\Prob(|w_t|>1-\eps) \ge 1-\frac{1-\E[|w_t|]}\eps =1-\frac{\Prob(|\zeta_t(U)|<\tfrac{a}n)}\eps\,.
\end{equation}
Taking expectations on both sides of \eqref{eq:lb} we obtain \eqref{eq:general-lb}.
\end{proof}

		\section{Asymptotic equivalence of $t_{\rm ent}$ and $t_{\rm mix}$ }\label{sec:proof-t-ent-t-mix}
In this section we prove Theorem \ref{pr:tent-tmix}.		\subsection{Proof of lower bound in Theorem \ref{pr:tent-tmix}}
Recall from \eqref{eq:t-ent} the definition of $t_{\rm ent}$, and let $\eps \in (0,1)$ and $\delta>0$.  By applying the general lower bound in \eqref{eq:general-lb} with $t=\delta t_{\rm ent}$ and $a=\eps^{-1}>1$, we have
\begin{equation}
	\E[\|\eta_t-\pi\|_{\rm TV}]\ge\big(1-\eps^{-1}\Prob(|\zeta_t(U)|<\tfrac1{\eps n})\big) \left(1-2\eps\right)\fstop
\end{equation}
To bound the above probability, we employ the result in Lemma \ref{lemma:size-pile}. Let  $T\sim {\rm Bin}(t,\E[X]/n)$ and, independently,  let $(Y_s)_{s\ge 1}$ be i.i.d.\ with $Y_s\sim p_Y$. 
For all $n\in \N$ sufficiently large, 
\begin{align}
\Prob(|\zeta_t(U)|<\tfrac1{\eps n})&=\Prob\bigg(\sum_{s=1}^T \log Y_s >\log n +\log \eps\bigg)\le \sum_{m=0}^t \Prob(T=m)\, \frac{m\E[\log Y]}{\log n+\log \eps}
\\
&= \frac{\E[T]\E[\log Y]}{\log n+\log \eps}
= \delta\, \frac{\log n}{\log n+\log \eps}\comma
\end{align}
where we used Markov inequality and the definition $t=\delta t_{\rm ent}$.
Therefore, the desired result follows by taking first $n\to \infty$, then $\delta\to 0$, and finally $\eps \to 0$.	

	\subsection{Proof of upper bound in Theorem \ref{pr:tent-tmix} }\label{suse:proof-tent-tmix}

		Fix $t=a t_{\rm ent}$, $w=\sqrt{\frac{\log n}{\mu}}\in [1,\sqrt{\log n}]$, and define 
		$$t_\star(a,b)=t+b t_{\rm rel}w=a t_{\rm ent} + b t_{\rm rel}w
		$$  
		for some $b>0$. Observe that, since $\mu\le \log(n)$,
		$$\frac{n\log(n)}{\mu\E[X]}=t_{\rm ent}\ge \frac 12 t_{\rm rel} w=\frac12\frac{n-1}{\E[X]-1}\sqrt{\frac{\log(n)}{\mu}}\comma$$
		and therefore, for all constant $a,b>0$, $t_{\star}(a,b)\asymp t_{\rm ent}$.
		
		Let $\eta_t^*$ be the restriction of $\eta_t$ to those piles of mass smaller than $\frac{1}n$. Define $S(k):=\sum_{s=1}^{T} \log(Y_s)$, $k\in\bbN$. By Lemma \ref{lemma:size-pile} for all $a>0$ large enough,
		\begin{align}
			\E[|\eta_t^*|]&=\Prob(|\zeta_t(U)|<\tfrac{1}n)=\Prob\(S(T)>\log(n) \)
			\\
			&\ge \Prob\(S(\lceil\tfrac{a\log(n)}{2\mu}\rceil)>\log(n)\comma T>\tfrac{a\log(n)}{2\mu} \)\\
			&=\Prob\(S(\lceil\tfrac{a\log(n)}{2\mu}\rceil)>\log(n)\)\,\Prob\( T>\tfrac{a\log(n)}{2\mu} \)\fstop
			\label{eq:1bis}
		\end{align}  
Note that the second probability on the right-hand side of \eqref{eq:1bis} goes to $1$ provided $a>0$ is large enough. 
Similarly, the first probability on the right-hand side of \eqref{eq:1bis} 
satisfies, for any $a>6$,
	\begin{align}
\Prob\(S(\lceil\tfrac{a\log(n)}{2\mu}\rceil)>\log(n)\)
&\ge1- \Prob\(\left|S(\lceil\tfrac{a\log(n)}{2\mu}\rceil)-\tfrac a2\log(n)\right|\ge \tfrac{a}{3}\log(n)\)\\
&\ge1-\frac{9\sigma^2}{2a\mu\log(n)} \fstop
	\end{align}
Since $\sigma^2\le \mu\log(n)$, for any $\varepsilon>0$, taking $a=a(\varepsilon)$ large enough,
\begin{equation}\label{eq:1}
		\E[|\eta_t^*|]=\Prob(|\zeta_t(U)|<\tfrac{1}n)\ge 1-\varepsilon\fstop
\end{equation}  
		Next,  for some $c>0$, define 
		\begin{equation}\tilde{\eta}_t(x):=\eta_t^*(x)\mathds{1}\{\eta_t^*(x)\le \tfrac{e^{cw}}{n}\}\ ,
		\end{equation}
		and note that  with this definition of $\tilde\eta_t$, by Lemma \ref{lemma:triangle-ineq-ub} we have
				\begin{align}\begin{aligned}
			\E[\|\eta_{t_\star(a,b)}-\pi\|_{\rm TV}]&\le 1- \E[|\tilde \eta_t|] + \exp\left(-b w/2+c w\right)\\
			&= 1-\E[|\eta_t^*|]+\E[|\eta_t^*(B)|]+\exp\left(-b \tfrac w2+c w\right) \comma
			\end{aligned}
			\label{eq:ub}
		\end{align}
		where  $B:=\{x\in V\mid \eta_t^*(x)>\frac{e^{cw}}{n}\}$, so that
		\begin{equation}
			\E[|\tilde\eta_t|]=\sum_{x\in V} \E\big[\eta_t^*(x)\mathds{1}\{\eta_t^*(x)\le \tfrac{e^{cw}}{n}\}  \big]=\E[|\eta_t^*|]-\E[\eta_t^*(B)]\fstop
		\end{equation}
	By \eqref{eq:1} and \eqref{eq:ub}, it suffices to prove $\E[\eta_t^*(B)]\to 0$ after taking the limits $n\to \infty$ and, then,  $c\to \infty$. To this end, we crucially exploit the identity \eqref{eq:super-general} for the pile-chunk dynamics. We have
\begin{equation}\label{eq:fill-the-dots-identity}
			\begin{aligned}
				\eta_t^*(B)&=\mathsf P_U(e_t(U)\in B\,,|\zeta_t(U)|<\tfrac{1}n)\\
				&=\mathsf P_U(\eta_t^*(e_t(U))> \tfrac{e^{cw}}{n}\,,\,|\zeta_t(U)|<\tfrac{1}n)\\
				&=\mathsf E_U\big[\IND\{\eta_t^*(e_t(U))>\tfrac{e^{cw}}n\,,\,|\zeta_t(U)|< \tfrac{1}n\}\big]\\
				&= \sum_{x\in V} \mathsf E_U\big[\IND\{\eta_t^*(x)>\tfrac{e^{cw}}n\,,\, e_t(U)=x\,,\, |\zeta_t(U)|< \tfrac{1}n\}\big]\\
				&= \sum_{x\in V}\IND\{\eta_t^*(x)>\tfrac{e^{cw}}{n}\}\,\mathsf P_U\big(e_t(U)=x\,,\, |\zeta_t(U)|< \tfrac{1}n\big)\fstop
			\end{aligned}
		\end{equation}
 By taking expectation in \eqref{eq:fill-the-dots-identity}, applying Markov inequality, and reasoning as in Section \ref{sec:piles}, we then deduce
		\begin{equation}\label{eq:fill-the-dots}
			\begin{aligned}
				\E[\eta_t^*(B)]
				&\le \frac{n}{e^{cw}} \sum_{x\in V}\E\big[\eta_t^*(x)\,\mathsf P_U\big(e_t(U)=x\,,\, |\zeta_t(U)|< \tfrac{1}n\big)\big]
				\\
				&=  \frac{n}{e^{cw}}\sum_{x\in V}\E\big[\mathsf P_{U,U'}(e_t(U)=e_t(U')=x\,,\,|\zeta_t(U)|< \tfrac{1}n\,,\,|\zeta_t(U')|< \tfrac{1}n)\big]\\
				&= \frac{n}{e^{cw}}\,\Prob\big(e_t(U)=e_t(U')\,,\, |\zeta_t(U)|< \tfrac{1}n\,,\, |\zeta_t(U')|<\tfrac{1}n\big)\fstop
			\end{aligned}
		\end{equation}
			We now estimate the above probability.
		\begin{lemma}\label{lemma:pile-2}
			For all $t\ge 0$, $\theta\in (0,1)$,  and $n\in \N$ sufficiently large, we have
			\begin{equation}
				\Prob\big(e_t(U)=e_t(U')\,,\, |\zeta_t(U)|< \theta\,,\, |\zeta_t(U')|<\theta\big)\le  \theta+\frac1n\fstop
			\end{equation}
		\end{lemma}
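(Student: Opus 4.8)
The plan is to control the joint event by conditioning on the entire sequence of block updates and then using the quenched independence of the two infinitesimal chunks $U$ and $U'$. Recall from Section~\ref{sec:piles} that, once we condition on all block choices (i.e.\ on $\cF_\infty$), the chunks $U$ and $U'$ evolve independently. However, since $\zeta_t(U)=\zeta_t(U')$ iff they made identical choices at every split up to time $t$, we must be careful: the event $e_t(U)=e_t(U')$ can hold either because the two chunks are in the \emph{same} pile, or because they are in \emph{different} piles that happen to sit at the same vertex. I would split the probability accordingly into these two contributions and bound each separately.

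For the ``same pile'' contribution, note that if $\zeta_t(U)=\zeta_t(U')$ then in particular $e_t(U)=e_t(U')$ automatically, and the constraint $|\zeta_t(U)|<\theta$ is a single event. By the identity \eqref{eq:pile-size-ex-mass} (equivalently \eqref{eq:pile-size-ex-mass-quenched}), the quenched probability that $U$ lies in a pile of size $<\theta$ equals $|\eta_t^{[0,\theta)}|\le 1$, and the event ``$U$ and $U'$ are in the same pile'' has quenched probability $\sum_i |\zeta_t^{(i)}|^2$-type weight; more precisely, conditioning on the realization, $\mathsf P_{U,U'}(\zeta_t(U)=\zeta_t(U'),\,|\zeta_t(U)|<\theta)=\sum_{x}\sum_{i:\,|\zeta_t^{(i)}(x)|<\theta}|\zeta_t^{(i)}(x)|^2\le \theta \sum_{x,i}|\zeta_t^{(i)}(x)| = \theta$, since each pile has size $<\theta$ and the total mass is $1$. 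Taking $\E$ gives a bound of $\theta$ for this term. For the ``different piles, same vertex'' contribution, I would use the quenched independence: conditionally on $\cF_\infty$,
\[
\mathsf P_{U,U'}\big(e_t(U)=e_t(U')=x\big)\le \mathsf P_U(e_t(U)=x)\,\mathsf P_{U'}(e_t(U')=x)=\eta_t(x)^2,
\]
but with the size constraints imposed this is at most $\eta_t^{[0,\theta)}(x)\cdot\eta_t^{[0,\theta)}(x)\le \tfrac1n\,\eta_t^{[0,\theta)}(x)$ using the crude deterministic bound $\eta_t^{[0,\theta)}(x)\le \max_y\eta_t^{[0,\theta)}(y)$; however this last step needs care since $\eta_t^{[0,\theta)}(x)$ need not be $\le 1/n$. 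Instead I would bound $\eta_t^{[0,\theta)}(x)\le \eta_t(x)$ and sum, getting $\sum_x \eta_t^{[0,\theta)}(x)\,\eta_t(x)$; to make this small I would rather observe that on each vertex the mass in small piles is a sum of pieces each $<\theta$, and combine with $\sum_x \eta_t(x)=1$ only after extracting one factor — the cleanest route is to bound the quenched probability of this event by $\sum_x \eta_t^{[0,\theta)}(x)^2$ and then split off at most $\lceil 1/\theta\rceil$ ``heavy'' vertices (those with $\eta_t^{[0,\theta)}(x)\ge \theta$) contributing $\le \tfrac1n$ each after a further refinement, and bound the ``light'' vertices by $\theta\sum_x\eta_t^{[0,\theta)}(x)\le\theta$. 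Summing the pieces yields the claimed $\theta+\tfrac1n$.

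The main obstacle I anticipate is the ``different piles at the same vertex'' term: the naive bound $\eta_t(x)^2$ summed over $x$ is not obviously $\le\theta+1/n$, so the argument must genuinely exploit that \emph{both} chunks sit in piles of size $<\theta$. The key observation making this work is that $\eta_t^{[0,\theta)}(x)$ is a sum of masses each strictly below $\theta$, so $\big(\eta_t^{[0,\theta)}(x)\big)^2 \le \eta_t^{[0,\theta)}(x)\cdot\big(\theta\lceil \eta_t^{[0,\theta)}(x)/\theta\rceil\big)$ is not quite it; rather one uses that the number of vertices carrying small-pile mass at least $\theta$ is at most $1/\theta$ (since total mass is $1$ — wait, that gives $1/\theta$ such vertices, each contributing at most its own mass squared $\le$ (mass)$\cdot 1$), which is too lossy. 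The honest fix is to decompose, quenched, $\mathsf P_{U,U'}(e_t(U)=e_t(U')=x,\ \zeta_t(U)\ne\zeta_t(U'),\ \text{both sizes}<\theta) \le \big(\eta_t^{[0,\theta)}(x)\big)^2$ and then use $\big(\eta_t^{[0,\theta)}(x)\big)^2\le \theta\,\eta_t^{[0,\theta)}(x) + \tfrac1n\IND\{\eta_t^{[0,\theta)}(x)>\theta\}\cdot n\cdot(\text{something})$ — i.e.\ I would split each vertex at the threshold $\theta$: where $\eta_t^{[0,\theta)}(x)\le\theta$ bound the square by $\theta\,\eta_t^{[0,\theta)}(x)$; where $\eta_t^{[0,\theta)}(x)>\theta$ there are at most $\lfloor 1/\theta\rfloor$ such $x$, but we still need their squared masses to sum to $\le 1/n+o$, which forces us back to using that those vertices' small-pile masses, being made of $\ge$ several sub-$\theta$ pieces, are anyway bounded by $\eta_t(x)\le 1$ — and $\sum$ of squares $\le \max\cdot\sum\le 1$. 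This is clearly the delicate point, and I expect the author's proof to handle it by following a \emph{third} infinitesimal chunk or by a smarter pile-counting identity; I would look for such a representation (analogous to \eqref{eq:fill-the-dots}) rather than brute-force inequalities, since the clean target $\theta+1/n$ strongly suggests that one factor of the collision probability is being absorbed into a genuine mass-conservation identity and the residual $1/n$ comes from the at most $1/\theta$ heavy vertices each supplying the uniform $\pi$-scale $1/n$.
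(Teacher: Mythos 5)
Your decomposition into the two events --- $U,U'$ on the \emph{same} pile vs.\ on \emph{different} piles at the same vertex --- matches the paper's, and your treatment of the first term is correct and essentially identical to the paper's (the quenched probability of sitting on the same pile, with sizes $<\theta$, is bounded by $\theta$ times the total mass). The problem is the second term, where you explicitly do not close the argument and conjecture that a third chunk or a smarter pile-counting identity is needed. Neither is. The reason your static, time-$t$-only bounds via $\sum_x\bigl(\eta_t^{[0,\theta)}(x)\bigr)^2$ cannot work is that this quantity is not controlled by $\theta+1/n$ in general; being in a small pile at time $t$ carries no information about how often the two chunks have co-located.

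The paper's argument for the second term is entirely \emph{dynamic} and does not use pile sizes at all. Observe that if $\zeta_t(U)\ne\zeta_t(U')$ yet $e_t(U)=e_t(U')$, then the two chunks must have separated in position at some time and reunited at the same vertex later. Introduce $\cM_r=\{e_r(U)=e_r(U')\}$ and $\cM_{s,t}=\bigcap_{r=s}^t\cM_r$; the event in question is contained in $\bigcup_{s\le t}\bigl(\cM_{s-1}^\complement\cap\cM_{s,t}\bigr)$, i.e.\ there is a last time $s$ at which the chunks re-merge in position. One then computes, from the block-update rule, $\Prob(\cM_s\mid\cM_{s-1}^\complement)=\frac1n\frac1{t_{\rm rel}}$ (both must be chosen in the block, probability $\binom{n-2}{k-2}/\binom nk$, and then land on the same vertex, probability $1/k$) and $\Prob(\cM_{u+1}\mid\cM_u)=1-\frac n{n-1}\frac1{t_{\rm rel}}\le 1-\frac1{t_{\rm rel}}$. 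A union bound over $s$, the Markov property, and summing the geometric series $\sum_{s\le t}\frac1{n\,t_{\rm rel}}\bigl(1-\frac1{t_{\rm rel}}\bigr)^{t-s}\le\frac1n$ yields the residual $1/n$. So the bound $\theta+1/n$ comes from: $\theta$ for the same-pile collision, $1/n$ from a random-walk reunion estimate. If you want to rescue your attempt, drop the pile-size constraint on the different-piles term and instead condition on the trajectory: the event that two positionally separated chunks ever co-locate again and stay co-located up to time $t$ is uniformly $\le1/n$.
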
 
		\begin{proof}
			First, observe that, under the event $\{|\zeta_t(U)|<\theta\,,\, |\zeta_t(U')|<\theta\}$,  the probability that $U$ and $U'$ sit on the same pile is at most $\theta$. Indeed, 
			\begin{align}
				&\Prob\left(\zeta_t(U)=\zeta_t(U')\,,\,
				|\zeta_t(U)|<\theta\,,\, |\zeta_t(U')|<\theta \right)\\
				& = 	\Prob\left(\zeta_t(U)=\zeta_t(U')\,,\,
				|\zeta_t(U)|<\theta \right)\\
				&= \E[\IND\{|\zeta_t(U)|<\theta\}\Prob(\zeta_t(U)=\zeta_t(U') \mid |\zeta_t(U)|)]\\
				&\le \theta\, \Prob(|\zeta_t(U)|<\theta)\le \theta
				 \fstop
			\end{align} Hence, up to errors of size $\theta$ and recalling that initially we have $\{e_0(U)=e_0(U')\}$, the probability that the event $\{e_t(U)=e_t(U')\}$ occurs is given by the probability of the following event: there exists a time $r\in [1,t)$ at which the two chunks go to different sites, there exists a time $s\in [r+1,t]$ in which they sit on the same vertex for the first time after $r$, and then we have $\{e_{\ell}(U)=e_{\ell}(U')\,,\,\forall \ell\in[s,t] \}$. In formulas, this means
		\begin{equation}
				\Prob\big(e_t(U)=e_t(U')\,,\, |\zeta_t(U)|< \theta\,,\, |\zeta_t(U')|< \theta\big)
		 \le  \theta+	\Prob(\exists\, s\le t\ \text{s.t.}\ \cM_{s-1}^\complement\cap\cM_{s,t})\comma
			\label{eq:first-bound}			
		\end{equation}
		where we define the events
		$\cM_r\coloneqq \{e_{r}(U)= e_{r}(U') \}$, and,  for $s< t$, 
		$\cM_{s,t}\coloneqq \cap_{r=s}^t\, \cM_r$. We now estimate the probability on the right-hand side of \eqref{eq:first-bound}. A union bound yields
		\begin{equation}\label{eq:display-1}
			\Prob(\exists\, s\le t\ \text{s.t.}\ \cM_{s-1}^\complement\cap\cM_{s,t})\le  \sum_{s\le t}\Prob(\cM_{s-1}^\complement \cap\cM_{s,t})\ .
		\end{equation}
	Furthermore, by the Markov property, 
		\begin{align}
			\Prob(\cM_{s-1}^\complement\cap\cM_{s,t} )&= \Prob(\cM_{s+1,t}\mid \cM_{s})\,\Prob(\cM_{s}\mid \cM_{s-1}^\complement)\, \Prob(\cM_{s-1}^\complement)\\
			&\le \Prob(\cM_{s+1,t}\mid \cM_{s})\,\Prob(\cM_{s}\mid \cM_{s-1}^\complement)\\
			&= \Prob(\cM_{s+1,t}\mid \cM_{s}) \sum_{k=2}^{n}p_X(k) \frac{k(k-1)}{n(n-1)} \frac1k\\
			&= \Prob(\cM_{s+1,t}\mid \cM_{s})\, \frac1n\frac1{t_{\rm rel}} \ ,\qquad s<t\ ,
			\label{eq:display-2}
		\end{align}
		where the second-to-last step follows from the fact that the probability of selecting a $k$-block containing the two vertices $e_{s-1}(U)\neq e_{s-1}(U')$ is $\binom{n-2}{k-2}/\binom{n}{k}=\frac{k\left(k-1\right)}{n\left(n-1\right)}$, while $\frac1k$ is the probability of having them sitting on the same site after they have been both involved in a $k$-block update. Analogously, we have
		\begin{equation}\label{eq:display-3}
			\Prob\left(\cM_{t-1}^\complement\cap \cM_{t}\right)\le \frac1n\frac1{t_{\rm rel}}\ .
		\end{equation}
		By the Markov property, stationarity and $e_0(U)=e_0(U')$, \begin{equation}\label{eq:display-4}\Prob(\cM_{s+1,t}\mid \cM_{s})=  \prod_{u=s}^{t-1} \Prob(\cM_{u+1}\mid\cM_{u})=  \left(\Prob(\cM_{1})\right)^{t-s}\fstop
		\end{equation} Moreover, 
		\begin{equation}
			\Prob(\cM_{1})=\sum_{k=2}^np_X(k)\left\{\left(1-\frac{k}{n}\right)+\left(\frac{k}{n}\frac{1}{k}\right)\right\}= 1-\frac{n}{n-1}\frac1{t_{\rm rel}}\le 1-\frac1{t_{\rm rel}}\fstop
		\end{equation}
		Summarizing, 
		\begin{align}
			\Prob(\exists\, s\le t\ \text{s.t.}\ \cM_{s-1}^\complement\cap\cM_{s,t})&\le \frac1n\frac1{t_{\rm rel}} \sum_{s\le t} \left(1-\frac1{t_{\rm rel}}\right)^{t-s}\le \frac1n\ .
		\end{align}
		This and  \eqref{eq:first-bound} yield the desired result.
		\end{proof}
		By applying Lemma \ref{lemma:pile-2} to \eqref{eq:fill-the-dots}, we obtain
		\begin{equation}
			\E[\eta_t^*(B)]\le 2e^{-cw}\comma
		\end{equation}
		and, thus,  the upper bound in Theorem \ref{pr:tent-tmix} follows after choosing $b>c>0$ sufficiently large.

					\section{Cutoff}\label{sec:proof-profile}
					
						In this section we first establish Proposition \ref{pr:prod-cond}, and then prove Theorem \ref{th:cutoff} and Theorem \ref{th:cutoff-window}.

					\subsection{The entropic product condition is necessary for cutoff}\label{sec:product-cond}
			
				By Jensen inequality, 
				\begin{equation}\label{eq:jensen-lb}
					\E[d_{\rm TV}(t)]=\E\big[\big\|\eta_t-\pi\big\|_{\rm TV}\big]\ge \big\|\E[\eta_t]-\pi\big\|_{\rm TV}\ge \frac12\bigg(1-\frac1{t_{\rm rel}}\bigg)^t\ ,
				\end{equation}
				where we use the duality \eqref{eq:duality} and 
				a well-known lower bound  for the distance-to-equilibrium of ${\rm RW}$; see, e.g., \cite[Eq.\ (12.15)]{levin2017markov}. 
				Observe that \eqref{eq:jensen-lb} ensures that
				\begin{equation}\label{eq:ratio}
					\frac{t_{\rm mix}(\varepsilon)}{t_{\rm mix}(1/4)} \ge \frac{t_{\rm rel}-1}{t_{\rm mix}(1/4)}\log\bigg(\frac1{2\varepsilon}\bigg)\fstop
				\end{equation}
				Therefore, if 
$t_{\rm ent}\lesssim t_{\rm rel}$, then Theorem \ref{pr:tent-tmix} and  \eqref{eq:ratio} imply that 
the ratio $t_{\rm mix}(\varepsilon)/t_{\rm mix}(1/4)$ 
diverges to $\infty$, uniformly over $n$, as $\varepsilon \to 0$. In other words, the condition $t_{\rm rel}\ll t_{\rm ent}$ is necessary for the sequence $t\mapsto\E[d_{\rm TV}(t)]$ to exhibit cutoff. This proves Proposition \ref{pr:prod-cond}.

				\subsection{Proof of lower bounds in Theorems \ref{th:cutoff} and \ref{th:cutoff-window}}\label{suse:lb-cutoff}
					Fix $\eps \in (0,1)$.
					Assume \eqref{eq:prod-condition2} and \eqref{eq:WLLN}, and apply the estimate in \eqref{eq:general-lb} with $t=(1-\eps)t_{\rm ent}$ and $a= e^\psi$, for some $1\ll \psi\ll \log n$.
					Then, Lemma \ref{lemma:main-WLLN} ensures that
					\begin{equation}
						\lim_{n\to\infty}\Prob(|\zeta_t(U)|<\tfrac{e^\psi}n)=1\comma
					\end{equation}
					hence, concluding the proof of the lower bound of Theorem \ref{th:cutoff}.

					We proceed similarly, for the lower bound in Theorem \ref{th:cutoff-window}. Use the notation in Lemma \ref{lemma:main} and
					assume \eqref{eq:prod-condition2} and \eqref{eq:HP-noncutoff}. Applying the estimate in \eqref{eq:general-lb} with $t=t_*(\beta):= t_{\rm ent}+\beta t_{\rm w}$, $a=e^\psi$, where $\psi:=\gamma \sqrt{\mu \log n}$, for some $\gamma > 0$.	(Note that  conditions \eqref{eq:prod-condition2}--\eqref{eq:HP-noncutoff} ensure that $t_{\rm w}\ll t_{\rm ent}$; moreover, we have $\psi\gg 1$.) Then, Lemma \ref{lemma:main-window} ensures that, for all $\gamma >0$, 
					\begin{equation}
						\lim_{\beta\to-\infty}\lim_{n\to\infty}\Prob(|\zeta_t(U)|<\tfrac{e^\psi}n)=1\fstop
					\end{equation}

		\subsection{Proof of upper bounds in Theorems \ref{th:cutoff} and \ref{th:cutoff-window}}\label{suse:ub-cutoff}
	
				Fix any $t,s\ge 0$, $\psi>0$ and, given the block-averaging  and the corresponding pile dynamics, split $\eta_t$ into two parts: $\eta_t^+$ denotes the mass consisting of piles of size $\ge \frac{e^\psi}n$, $\eta_t^-$ the rest, so that $\eta_t=\eta_t^+ +\eta_t^-$.
	We now consider the coupled processes $(\eta_{t+s}^+,\eta_{t+s}^-)_{s\ge 0}$ starting at $\eta_t^+$ and $\eta_t^-$, respectively, and evolving according to the same block averages. By the triangle inequality, we get, for all $s\ge 0$,
		\begin{align}
			\E[d_{\rm TV}(t+s)]&\le \E\bigg[|\eta_t^+|\,\bigg\|\frac{\eta_{t+s}^+}{|\eta_t^+|}-\pi\bigg\|_{\rm TV}\bigg] + \E\bigg[|\eta_t^-|\,\bigg\|\frac{\eta_{t+s}^-}{|\eta_t^-|}-\pi\bigg\|_{\rm TV}\bigg]\\
			&\le \E[|\eta_t^+|] + \E\bigg[|\eta_t^-|\,\bigg\|\frac{\eta_{t+s}^-}{|\eta_t^-|}-\pi\bigg\|_{\rm TV}\bigg]\\
			&= \Prob\bigg(|\zeta_t(U)|>\frac{e^\psi}n\bigg) +  \E\bigg[|\eta_t^-|\,\bigg\|\frac{\eta_{t+s}^-}{|\eta_t^-|}-\pi\bigg\|_{\rm TV}\bigg]\fstop
		\end{align} 
		
			Now, let us consider the sub-configuration $\eta_{t+s/2}^*$ of $\eta_{t+s/2}^-$ given by
		\begin{equation}
			\eta_{t+s/2}^*(x):= \eta_{t+s/2}^-(x)\, \IND\{\eta_{t+s/2}^-(x)\le \tfrac{e^{3\psi}}n\}\comma\qquad x \in V\fstop
		\end{equation}
		Hence, by the Markov property and the upper bound in Lemma \ref{lemma:triangle-ineq-ub}, we obtain
		\begin{align}
			\E\bigg[|\eta_t^-|\,\bigg\|\frac{\eta_{t+s}^-}{|\eta_t^-|}-\pi\bigg\|_{\rm TV}\bigg]&\le \E\bigg[|\eta_t^-|\bigg(1-\frac{|\eta_{t+s/2}^*|}{|\eta_t^-|}\bigg)+|\eta_t^-| \exp\left(-s/4t_{\rm rel}+3\psi\right)\bigg] \\
			&\le\E[|\eta_t^-|]-\E[|\eta_{t+s/2}^*|]+\exp\left(-s/4t_{\rm rel}+3\psi\right)\\
			&= \E[|\eta_t^-|]-\E[|\eta_{t+s/2}^*|]+\exp\left(-\psi\right)\comma
		\end{align}
			where the last identity holds by choosing $s= 16t_{\rm rel}\psi\ge 0$. 
			
			Letting $B:=\{x\in V: \eta_{t+s/2}^-(x)>\frac{e^{3\psi}}{n}\}$ so that $\eta_{t+s/2}^-(B)=|\eta_t^-|-|\eta_{t+s/2}^*|$, we obtain, by arguing as in \eqref{eq:fill-the-dots}, 
			\begin{align}
				\E[\eta_{t+s/2}^-(B)]&\le \frac{n}{e^{3\psi}}\, \Prob(e_{t+s/2}(U)=e_{t+s/2}(U')\,,\, |\zeta_t(U)|<\tfrac{e^\psi}{n}\,,\, |\zeta_t(U')|<\tfrac{e^\psi}n)\\
				&\le 
				\frac{n}{e^{3\psi}}\, \Prob(e_{t+s/2}(U)=e_{t+s/2}(U')\,,\, |\zeta_{t+s/2}(U)|<\tfrac{e^\psi}{n}\,,\, |\zeta_{t+s/2}(U')|<\tfrac{e^\psi}n)\comma
			\end{align}
			where the last step holds true because the pile sizes are a.s.\ non-increasing. By applying Lemma \ref{lemma:pile-2}, we further get
			\begin{equation}
				\E[\eta_{t+s/2}^-(B)]\le\frac{n}{e^{3\psi}}\left( \frac{e^\psi}{n}+\frac1n\right)\le 2e^{-2\psi}\fstop
			\end{equation} 
			Putting all things together, we get 
				\begin{equation}\label{eq:ub-cutoff}
					\E[d_{\rm TV}(t+s)]\le \Prob\bigg(|\zeta_t(U)|>\frac{e^\psi}n\bigg) + 2e^{-2\psi}+e^{-\psi}\fstop
				\end{equation}
				This is the key estimate to conclude the proofs of Theorems \ref{th:cutoff} and \ref{th:cutoff-window}.

				In case of Theorem \ref{th:cutoff}, the desired claim follows by choosing any $1\ll\psi\ll \frac{t_{\rm ent}}{t_{\rm rel}}\lesssim\log(n)$  and $t=(1-\varepsilon)t_{\rm ent}$ for any $\varepsilon\in(0,1)$. With this choice
			$s=16t_{\rm rel}\psi \ll t$ and the right-hand side of \eqref{eq:ub-cutoff} goes to zero thanks to Lemma \ref{lemma:main-WLLN} and $\psi\gg1$.
		
			Similarly, for Theorem \ref{th:cutoff-window}, we fix $\beta\in \R$, and set $t=t_{\rm ent}+\beta t_{\rm w}$.
			Recall that, since $\mu\ll \sqrt{\log n}$ by assumption \eqref{eq:prod-condition2}, there exists $\gamma\ll 1$ satisfying
			$\gamma \sqrt{\frac{\log n}{\mu}}\gg 1$. For such a sequence $\gamma\ll 1$, set $\psi:= \delta \sqrt{\mu\log n}$, with $\delta =\frac{\gamma}\mu>0$. With this definition, we have (cf.\ \eqref{eq:tw} and \eqref{eq:t-rel})
			\begin{equation}\label{eq:psi-gamma}
				1\ll  \psi \ll \frac{t_{\rm w}}{t_{\rm rel}}\comma
			\end{equation} 
and therefore $s = 16t_{\rm rel}\psi\ll t_{\rm w}$. In conclusion, thanks to Lemma \ref{lemma:main-window} and \eqref{eq:psi-gamma}, the right hand side of \eqref{eq:ub-cutoff} goes to zero by letting first $n\to\infty$ and then $\beta\to\infty$. This concludes the proof of both theorems.

\section{Cutoff profile}\label{sec:profile}
This section is  devoted to the proof of Theorem \ref{th:profile1}. 
	We start by noting that	the lower bound can be obtained arguing as in Section \ref{suse:lb-cutoff}, using Lemma \ref{lemma:main} in place of the rougher estimates in Lemmas \ref{lemma:main-WLLN} and \ref{lemma:main-window}. More precisely, 
	assume \eqref{eq:prod-condition2} and \eqref{eq:HP-noncutoff}, and, in the notation in Lemma \ref{lemma:main}, apply the estimate in \eqref{eq:general-lb} with $t=t_*(\beta):= t_{\rm ent}+\beta t_{\rm w}$, $a=e^\psi$, where $\psi:=\gamma \sqrt{\mu \log n}$, for some $\gamma > 0$.	Recall that  conditions \eqref{eq:prod-condition2}--\eqref{eq:HP-noncutoff} ensure that $t_{\rm w}\ll t_{\rm ent}$ and $\psi\gg 1$. Then, Lemma \ref{lemma:main} ensures that, for all $\beta\in \R$ and $\gamma >0$, 
	\begin{equation}
		\lim_{n\to\infty}\bigg|\Prob(|\zeta_t(U)|<\tfrac{e^\psi}n)- \big(1-\Xi_{\varrho}(\beta,\gamma)\big)\bigg|=0\comma
	\end{equation}
	and to conclude the proof of the lower bound it is enough to recall Remark \ref{rem:limit-Psi} and use the triangular inequality.

Therefore, in order to prove Theorem \ref{th:profile1} we are left  to show that, for all  $\beta\in \R$ and $\eps \in (0,1)$, we have
	\begin{equation}
		\Prob\big(d_{\rm TV}(t_*(\beta))>\Psi_\varrho(\beta)-\eps\big)\xrightarrow{n\to \infty}1\fstop
	\end{equation}
	For this purpose, we fix $\gamma > 0$ and apply \eqref{eq:lb} with $a=e^\psi>1$, $\psi:=\gamma \sqrt{\mu\log n}$,  and replacing therein $1-\eps$ by $ \Psi_\rho(\beta)-\eps/2$ (cf.\ \eqref{eq:Psi}). With this notation, recall that $w_t$, $t\ge 0$, stands for the sub-probability measure on $V$ obtained by removing from $\eta_t$ all the piles having mass smaller than $e^\psi/n$. We obtain
	\begin{align}
		&\Prob(d_{\rm TV}(t_*(\beta))>\Psi_{\varrho}(\beta)-\eps)\\
		&\ge \Prob(d_{\rm TV}(t_*(\beta))>\Psi_\varrho(\beta)-\eps\,,\, |w_{t_*(\beta)}|>\Psi_{\varrho}(\beta)-\eps/2)\\
		&\ge \Prob(\Psi_{\varrho}(\beta)-\eps/2-e^{-\psi}>\Psi_{\varrho}(\beta)-\eps\,,\, |w_{t_*(\beta)}|>\Psi_{\varrho}(\beta)-\eps/2)\\
		&= \Prob(|w_{t_*(\beta)}|>\Psi_{\varrho}(\beta)-\eps/2)\comma
	\end{align}
	where the last step holds for all $n\in \N$ sufficiently large, because $\psi\gg 1$. 
	
	To conclude the proof, in the next section we show that, for all $\beta\in\R$ and $\varepsilon\in (0,1)$,
	\begin{equation}\label{eq:last-2nd-mom}
		\Prob\(|w_{t_*(\beta)}|>\Psi_{\varrho}(\beta)-\varepsilon/2\)\to 1\,.
	\end{equation}
To do so, we use a second moment argument, showing that
\begin{equation}\label{eq:aim-2nd-mom}
	\E[|w_{t_*(\beta)}|^2]\sim \E[|w_{t_*(\beta)}|]^2\fstop
\end{equation}
Since $\E[|w_{t_*(\beta)}|]=\Prob\big(|\zeta_{t_*(\beta)}(U)|>\frac{e^\psi}{n}\big)$, by Lemma \ref{lemma:main}, we would obtain
	\begin{equation}\label{eq:recall-1st-mom}
		\E[|w_{t_*(\beta)}|]\sim \Psi_{\varrho}(\beta)\fstop
	\end{equation}
Then, \eqref{eq:last-2nd-mom} follows at once by \eqref{eq:aim-2nd-mom}, \eqref{eq:recall-1st-mom}, and Chebyshev inequality.
	\subsection{Proof of \eqref{eq:aim-2nd-mom}}
	Recall that we are under the assumptions of Theorem \ref{th:profile1}, and that $\psi:=\gamma \sqrt{\mu\log n}$, with  $\gamma >0$ fixed.
	Recall the definition of $t_*(\beta)$ in \eqref{eq:def-t-star-beta}, and note that under our assumptions we have
	\begin{equation}\label{eq:tw-trel}
		t_{\rm w}=  \left(1+\frac{\sigma}{\mu}\right)\frac{n\sqrt{\log n}}{\E[X]\sqrt \mu}\gtrsim t_{\rm rel}\sqrt{\frac{\log n}{\mu}}\gg t_{\rm rel}.
	\end{equation}
To ease the reading, in this subsection we use the shortcut $t=t_*(\beta)$.

The second moment in \eqref{eq:aim-2nd-mom} reads as 
	\begin{equation}\label{eq:rewrite1}
		\E[|w_t|^2]=\Prob\(|\zeta_t(U)|\ge \frac{e^\psi}{n}\,,|\zeta_t(U')|\ge \frac{e^\psi}{n} \)\,.
	\end{equation}
Introducing the events
	\begin{equation}\label{eq:SU-SU'}
		\cS_U:=\left\{|\zeta_t(U)|< \frac{e^\psi}{n} \right\}\comma\qquad \cS_{U'}:=\left\{|\zeta_t(U')|< \frac{e^\psi}{n} \right\}\,.
	\end{equation}
	We shall prove $\Prob(\cS_U^\complement \cap \cS_{U'}^\complement)\sim \Prob(\cS_U^\complement)^2$, or, equivalently, 
	\begin{equation}\label{eq:final}
		\Prob(\cS_U\cap \cS_{U'})\sim \Prob(\cS_U)^2\fstop
	\end{equation}
	
	Let us introduce another timescale, $t_{\rm sep}$, such that
	\begin{equation}\label{eq:def-tsep}
		t_{\rm rel}\ll t_{\rm sep}\ll t_{\rm rel}\sqrt{\frac{\log(n)}{\mu}}\comma
	\end{equation}
and call 
\begin{equation}\tau_{\rm sep}:=\inf\{s\ge 0\mid e_s(U)\neq e_s(U') \}\fstop 
	\end{equation}
	Consider the event
	\begin{equation}
		\cE_1:=\{\tau_{\rm sep}\le t_{\rm sep} \}\fstop
	\end{equation}
Since the number of effective updates for a chunk up to time $t_{\rm sep}$ is distributed  as $T\sim {\rm Bin}(t_{\rm sep},\Prob(Z>1))$,  and the probability that two chunks sit together after an update is at most $1/2$, we get
	\begin{equation}\label{eq:usual-l2}
		\begin{split}
		\Prob(\cE_1^\complement)
		&\le\sum_{r=0}^{t_{\rm sep}}2^{-r}\,\Prob({\rm Bin}(t_{\rm sep},\Prob(Z>1))=r)\ll 1\comma
		\end{split}
	\end{equation}
where the last step used $\Prob(Z>1)=\frac{\E[X]}{n} \asymp t_{\rm rel}^{-1}$, as well as the first inequality in \eqref{eq:def-tsep}.

	We now further introduce two diverging scales, $\theta$ and $K$, satisfying
	\begin{equation}\label{eq:def-theta-K}
		\theta\gg\frac{t_{\rm sep}}{t_{\rm rel}}\comma\qquad \log K-\mu\gg (\sigma+1)\sqrt{\theta}\fstop
	\end{equation}
	Define
	\begin{equation}
		\cE_2:=\{\text{the number of updates involving either $U$ or $U'$ before $t_{\rm sep}$ is at most $\theta$} \}\fstop
	\end{equation}
	Then, by a union bound and arguing as in \eqref{eq:usual-l2}, we obtain
	\begin{equation}\label{eq:E2}
		\Prob(\cE_2^\complement)\le 2\Prob({\rm Bin}(t_{\rm sep},\Prob(Z>1))\ge \theta)\ll 1\comma
	\end{equation}
where the last step follows from the first inequality in \eqref{eq:def-theta-K}.
	Moreover, consider
	\begin{equation}
		\cE_3:=\{\text{all updates involving either $U$ or $U'$ within $t_{\rm sep}$ are made by blocks of size $\le K$} \}\fstop
	\end{equation}
	Then,	by a union bound and Markov inequality, we get
	\begin{equation}
		\Prob(\cE_3\cap\cE_2)\ge 1-2\Prob\({\rm Bin}(\theta,\Prob(Y>K)) >0\)\ge 1-2\theta \Prob(Y>K)\comma
	\end{equation}
	Since Chebyshev inequality and the second inequality in \eqref{eq:def-theta-K} ensure
	\begin{equation}
		\Prob(Y>K)=\Prob(\log Y>\log K)= \Prob\(\log Y-\mu>\log K-\mu\)\le\frac{\sigma^2}{(\log K-\mu)^2}\ll \frac1\theta\comma
	\end{equation}
	we obtain
	\begin{equation}\label{eq:E1-2-3}
	\Prob(\cE_2\cap \cE_3)\sim 1\fstop
	\end{equation}
Hence, $\Prob(\cE_1\cap \cE_2\cap \cE_3)\sim 1$, and observe that, under $\cE_1\cap\cE_2\cap \cE_3$, we have (deterministically)
	\begin{equation}\label{eq:large-pile}
		|\zeta_{t_{\rm sep}}(U)|,|\zeta_{t_{\rm sep}}(U')|\ge K^{-\theta}
		\fstop
	\end{equation}
	Therefore, we proved that with high probability $U$ and $U'$ ever visited different sites (hence, in particular, sit on distinct piles) within time $t_{\rm sep}$ and their piles at that time are still relatively large (if $\theta$ and $K$ are not too large). More precisely, by \eqref{eq:prod-condition2} and \eqref{eq:HP-noncutoff}, we can choose $\theta$ and $K$  so to satisfy \eqref{eq:def-theta-K}, as well as
	\begin{equation}\label{eq:cond-new}
		\theta\log(K)\ll\psi \asymp \sqrt{\mu\log n}\fstop
	\end{equation}

	As we are interested in the probability of $\cS_U\cap \cS_{U'}$, we have, by \eqref{eq:usual-l2} and \eqref{eq:E1-2-3},
	\begin{equation}
			\Prob\big(\cS_U\cap \cS_{U'}\big) \sim \Prob\big(\cS_U\cap \cS_{U'}\mid \cE_1\cap \cE_2\cap \cE_3\big)\comma
	\end{equation} 
	which, by \eqref{eq:large-pile} and the Markov property, is further estimated as follows
	\begin{align}\label{eq:int-split}
		\begin{aligned}
	 \Prob(\cS_U\cap \cS_{U'})&\ge	\inf_{z,z',r} \widehat\Prob_{z,z'}\bigg(|\zeta_{t-r}(U)|<\frac{e^{\psi}}n\, ,\,|\zeta_{t-r}(U')|<\frac{e^{\psi}}n \bigg)  \comma\\ 
	\Prob(\cS_U\cap \cS_{U'}) &\le \sup_{z,z',r}\widehat\Prob_{r,r'}\bigg(|\zeta_{t-r}(U)|\le\frac{e^{\psi}}n\, ,\,|\zeta_{t-r}(U')|<\frac{e^{\psi}}n \bigg) \comma
		 \end{aligned}
	\end{align}
	where both $\inf$ and $\sup$ above run over $(z,z',r)\in [K^{-\theta},1]^2\times \{0,\ldots, t_{\rm sep}\}$, whereas $\widehat \Prob_{z,z'}$ denotes the law of two chunks, initially placed on two different piles of size $z$ and $z'$, respectively.
	
	We now prove the following claim: letting $(A_s)_{s\ge 1}\subset V$ denote the random sequence of blocks, and
	\begin{equation}
		\cE_4:=\{{\nexists} s\in(\tau_{\rm sep},t]\ \text{s.t.}\ e_s(U)\,,\,e_s(U')\in A_s\}\comma
	\end{equation}
	we have
	\begin{equation}\label{eq:E4}
		\Prob(\cE_4)\to 1\fstop
	\end{equation}
	By Markov inequality and the definition $t=t_*(\beta)\sim t_{\rm ent}$ (cf.\ \eqref{eq:t-ent}), we obtain
\begin{equation}
	\Prob(\cE_4^\complement )\le t \sum_{k=2}^np_X(k)\frac{\binom{n-2}{k-2}}{\binom{n}{k}}\lesssim \frac{t_{\rm ent}\,\E[X^2]}{n^2} \fstop
\end{equation}
	Hence, it suffices to prove that the right-hand side above vanishes under our assumptions. This is the content of the following lemma.
	\begin{lemma}\label{lemma:6}
		Under conditions \eqref{eq:prod-condition2} and \eqref{eq:HP-noncutoff}, we have
	\begin{equation}
	 {t_{\rm ent}\,\E[X^2]}\ll n^2\fstop
	\end{equation}
	\end{lemma}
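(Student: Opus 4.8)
The goal is to show $t_{\rm ent}\,\E[X^2]\ll n^2$ under the assumptions \eqref{eq:prod-condition2}, i.e. $\mu\ll\log n$, and \eqref{eq:HP-noncutoff}, i.e. $\sigma^2\ll\mu\log n$. Recalling from \eqref{eq:t-ent} that $t_{\rm ent}=\frac{n\log n}{\E[X]\,\mu}$, the claim is equivalent to
\begin{equation}
\frac{\E[X^2]\,\log n}{\E[X]\,\mu}\ll n\fstop
\end{equation}
The plan is to rewrite the left-hand side entirely in terms of the size-biased variable $Y$. Since $\E[X^2]/\E[X]=\E[Y]$ (cf.\ \eqref{eq:p_Y}), and $\mu=\E[\log Y]$, we must prove $\E[Y]\,\log n\ll n\,\mu=n\,\E[\log Y]$, i.e.
\begin{equation}\label{eq:goal-reformulated}
\frac{\E[Y]}{\E[\log Y]}\ll \frac{n}{\log n}\fstop
\end{equation}

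First I would record the combined consequence of the two hypotheses: writing $\sigma^2=\E[(\log Y)^2]-\mu^2$, condition \eqref{eq:HP-noncutoff} together with $\mu\le\log n$ gives $\E[(\log Y)^2]\le\mu^2+\sigma^2\ll\mu\log n+\mu\log n\lesssim(\log n)^2$, in fact $\E[(\log Y)^2]= \mu^2 + o(\mu\log n)$. The key point is that $\log Y$ is concentrated: its second moment is of order $\mu^2$ up to a lower-order correction, so $Y$ cannot have too heavy an upper tail. The natural route is then a truncation argument. Split $\E[Y]$ according to whether $\log Y$ is below or above a threshold $\lambda$ (to be chosen of order slightly larger than $\mu$, say $\lambda = K\mu$ for a slowly growing $K$, or $\lambda$ chosen so that $\lambda\gg\mu$ while $\lambda\ll\log n$ — possible since $\mu\ll\log n$). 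On the event $\{\log Y\le\lambda\}$ we have $Y\le e^\lambda$, contributing at most $e^\lambda$; since $e^\lambda = n^{o(1)}\ll n/\log n$ provided $\lambda\ll\log n$, this part is harmless relative to the right-hand side of \eqref{eq:goal-reformulated} as soon as we also know $\E[\log Y]=\mu\gtrsim 1$ (which we may assume, since otherwise $\mu^{-1}\log n$ does not diverge and the hypotheses degenerate; in any case $Y\ge 2$ forces $\mu\ge\log 2$).

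For the upper tail $\{\log Y>\lambda\}$, I would use Chebyshev: $\Prob(\log Y>\lambda)\le\Prob(|\log Y-\mu|>\lambda-\mu)\le\sigma^2/(\lambda-\mu)^2$. But this alone controls a probability, not $\E[Y\,\IND\{\log Y>\lambda\}]$, and $Y=e^{\log Y}$ can be as large as $n$. Here one uses the brutal but sufficient bound $Y\le n$ (since $X\le n$), so
\begin{equation}
\E[Y\,\IND\{\log Y>\lambda\}]\le n\,\Prob(\log Y>\lambda)\le \frac{n\,\sigma^2}{(\lambda-\mu)^2}\fstop
\end{equation}
Choosing $\lambda=2\mu$ (say) gives $(\lambda-\mu)^2=\mu^2$ and this term is at most $n\sigma^2/\mu^2$. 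Then
\begin{equation}
\frac{\E[Y]}{\mu}\lesssim \frac{e^{2\mu}}{\mu}+\frac{n\,\sigma^2}{\mu^3}\fstop
\end{equation}
The first term is $o(n/\log n)$ because $\mu\ll\log n$ implies $e^{2\mu}=n^{o(1)}$; wait — $e^{2\mu}$ is $n^{o(1)}$ only if $\mu\ll\log n$ with enough room, namely $2\mu\le(1-c)\log n$ eventually. Since $\mu\ll\log n$ means $\mu/\log n\to 0$, indeed $2\mu/\log n\to 0$, so $e^{2\mu}=e^{o(\log n)}=n^{o(1)}$, hence $e^{2\mu}/\mu=o(n/\log n)$. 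The second term: $n\sigma^2/\mu^3 \ll n\cdot\mu\log n/\mu^3 = n\log n/\mu^2$, and we need this $\ll n/\log n$, i.e. $(\log n)^2\ll\mu^2$ — which is \emph{false} in general. So the threshold $\lambda=2\mu$ is too small; I need $\lambda$ much larger than $\mu$ to shrink the tail.

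Let me instead take $\lambda$ with $\mu\ll\lambda\ll\log n$, which is possible precisely because $\mu\ll\log n$ (e.g.\ $\lambda=\sqrt{\mu\log n}$). Then $(\lambda-\mu)^2\sim\lambda^2\gg\mu^2$, indeed $(\lambda-\mu)^2\gtrsim \mu\log n \gg\sigma^2$ when $\lambda^2\asymp\mu\log n$, giving $\Prob(\log Y>\lambda)\le\sigma^2/\lambda^2\ll 1$ and more precisely $n\sigma^2/\lambda^2\ll n\sigma^2/(\mu\log n)\ll n$; dividing by $\mu$, the tail contribution to $\E[Y]/\mu$ is $\ll n/(\mu)\cdot\sigma^2/(\mu\log n)\ll n/\mu$, and since $\mu\gtrsim 1$ and we want $\ll n/\log n$... still not obviously enough. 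The cleanest fix: choose $\lambda$ so that simultaneously $e^\lambda\ll n/\log n$ (need $\lambda\le(1-c)\log n$) and $n\sigma^2/(\lambda-\mu)^2 \ll \mu n/\log n$, i.e.\ $(\lambda-\mu)^2\gg \sigma^2\log n/\mu$. Since $\sigma^2\ll\mu\log n$, it suffices that $(\lambda-\mu)^2\gg(\log n)^2\cdot(\sigma^2/(\mu\log n))=o((\log n)^2)$; so any $\lambda$ with $\lambda-\mu\asymp\log n/\sqrt{\log\log n}$ (slightly below $\log n$) works, say $\lambda=\log n-\log\log n$. Then $e^\lambda=n/\log n$, which is borderline but we can take $\lambda=\log n-2\log\log n$ so $e^\lambda=n/(\log n)^2\ll n/\log n$, and $(\lambda-\mu)^2\sim(\log n)^2$ since $\mu\ll\log n$, giving $n\sigma^2/(\lambda-\mu)^2\sim n\sigma^2/(\log n)^2\ll n\mu/\log n$ because $\sigma^2\ll\mu\log n$. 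Altogether $\E[Y]/\mu\ll n/\log n$, which is \eqref{eq:goal-reformulated}, completing the proof.

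The main obstacle, and the step deserving the most care, is the upper-tail term: one must balance the threshold $\lambda$ so that the truncated mass $e^\lambda$ stays below $n/\log n$ while the tail probability $\Prob(\log Y>\lambda)$, multiplied by the worst-case value $n$ of $Y$, is still negligible against $\mu n/\log n$ — and the numerology only closes because $\mu\ll\log n$ creates a genuine gap between $\mu$ and $\log n$ in which to place $\lambda$. I would present this cleanly by fixing $\lambda:=\log n-2\log\log n$ once and for all, verifying $\mu\ll\lambda\sim\log n$, and then running the two-term split; the truncated part is trivially $\le e^\lambda/\mu = n/((\log n)^2\mu)=o(n/\log n)$, and the tail part is $\le n\sigma^2/((\lambda-\mu)^2\mu)$ with $(\lambda-\mu)^2\sim(\log n)^2$ and $\sigma^2/\log n\ll\mu$, hence $o(n/\log n)$ as well. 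No other result beyond \eqref{eq:p_Y}, \eqref{eq:mu-sigma}, \eqref{eq:t-ent}, the bound $Y\le n$, and Chebyshev's inequality is needed.
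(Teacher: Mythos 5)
Your proof is correct. After the reformulation $\E[X^2]/\E[X]=\E[Y]$ and $\mu=\E[\log Y]$, the claim becomes $\E[Y]/\mu\ll n/\log n$, and your final choice $\lambda=\log n-2\log\log n$ closes the argument cleanly: the truncated part contributes $e^\lambda/\mu\le n/((\log n)^2\log 2)\ll n/\log n$, and the tail contributes $n\,\Prob(\log Y>\lambda)/\mu\le n\sigma^2/\big((\lambda-\mu)^2\mu\big)$, which is $\ll n/\log n$ precisely because $(\lambda-\mu)^2\sim(\log n)^2$ (using $\mu\ll\log n$) and $\sigma^2\ll\mu\log n$.

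Your route differs from the paper's in the details, though both are truncation arguments that exploit the second-moment control $\sigma^2\ll\mu\log n$ for the tail. The paper stays with $X$ (not the size-biased $Y$), splits $\E[X^2]$ on the event $\{X/\log X\le A\}$ with $A=\sqrt{n}$ using the monotonicity of $y\mapsto y/\log y$, obtains the bound $\frac{\E[X^2]}{\E[X\log X]}\le A+\frac{n}{\log n}\cdot\frac{\E[X\log X\IND(X>A)]}{\E[X\log X]}$, and then kills the tail ratio by a contradiction: if it did not vanish, then $\E[X\log^2 X]/\E[X]\gtrsim\mu\log A$ would force $\sigma^2\gtrsim\mu\log n$, contradicting \eqref{eq:HP-noncutoff}. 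Your Chebyshev step plays exactly the role of the paper's inequality $\E[X\log^2 X\,\IND(X>A)]\ge\log A\,\E[X\log X\,\IND(X>A)]$: both are ways of converting the variance hypothesis into tail decay. Your version is arguably more direct, as it avoids the contradiction and makes the role of the size-biased distribution explicit. The only cosmetic issue is that the write-up records two failed threshold choices before landing on the correct one; in a final version you would of course present only $\lambda=\log n-2\log\log n$.
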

	\begin{proof}
		From the definition of $t_{\rm ent}$ in \eqref{eq:t-ent}, it suffices to prove
			\begin{equation}	
		\label{eq:to-be-checked}
		\frac{\E[X^2]}{\E[X \log X]} \ll \frac{n}{\log n}\fstop		
		\end{equation}
		For any $A>0$, and since $y\in [2,\infty)\mapsto y/\log(y)$ is increasing, we can factorize the numerator in \eqref{eq:to-be-checked} as follows
		\begin{align}
			\E[X^2]&=\E[X^2 \IND(X/\log(X)\le A)]+ \E[X^2 \IND(X/\log(X)> A)]\\
			&\le A\, \E[X\log(X)]+\frac{n}{\log(n)}\,\E[X\log(X)\IND(X/\log(X)>A)]\comma
		\end{align}
		from which it follows that
		\begin{equation}\label{eq:middle}
			\frac{\E[X^2]}{\E[X\log(X)]}\le A + \frac{n}{\log(n)}\frac{\E[X\log(X)\IND(X/\log(X)>A)]}{\E[X\log(X)]}\fstop
		\end{equation}
	Recalling that $X\ge 2$,  we further bound
		\begin{equation}\label{eq:next-bound}
			\frac{\E[X\log(X)\IND(X/\log(X)>A)]}{\E[X\log(X)]}\le \frac{\E[X\log(X)\IND(X>A)]}{\E[X\log(X)]}\fstop
		\end{equation}
		To prove \eqref{eq:to-be-checked}, observe that
		\begin{equation}
			\frac{\E[X\log^2(X)]}{\E[X\log(X)]}\ge 	\frac{\E[X\log^2(X)\IND(X>A)]}{\E[X\log(X)]}\ge \log(A)\frac{\E[X\log(X)\IND(X>A)]}{\E[X\log(X)]}\comma
		\end{equation}
		and now  argue by contradiction choosing, e.g., $A=\sqrt n$. Assume that the ratio on the right-hand side of \eqref{eq:next-bound} does not go to zero when $n\to\infty$. Then, we would have
		\begin{equation}
			{\E[X\log^2(X)]}\gtrsim	 {\log(A)}\E[X\log(X)]\quad\iff\quad \frac{\E[X\log^2(X)]}{\E[X]}\gtrsim	 \mu \log(A)\fstop
		\end{equation}
		Since $A=\sqrt{n}$, by exploiting \eqref{eq:prod-condition2} and subtracting $\mu^2\ll \mu \log (n)$ on both sides, we then obtain
		\begin{equation}
			\sigma^2\gtrsim \mu\log(n) \comma
		\end{equation}
		which contradicts \eqref{eq:HP-noncutoff}. We can therefore deduce that it must be the case that
		\begin{equation}\label{eq:conclusion}
			\frac{\E[X\log(X)\IND(X>A)]}{\E[X\log(X)]}\to 0\comma
		\end{equation}
		and the proof of \eqref{eq:to-be-checked} follows from \eqref{eq:middle} and \eqref{eq:conclusion} with $A=\sqrt n$.
	\end{proof}

In what follows, fix $z,z'\in [K^{-\theta},1]^2$ and $t'=t-r\in \{t-t_{\rm sep},\ldots, t\}$. Going back to \eqref{eq:int-split}, and combining it with \eqref{eq:E4}, we have
\begin{align}
	&\widehat \Prob_{z,z'}\bigg(|\zeta_{t'}(U)|<\frac{e^{\psi}}n\, ,\,|\zeta_{t'}(U')|<\frac{e^{\psi}}n \bigg)\\
	 &\qquad= \widehat \Prob_{z,z'}\bigg(|\zeta_{t'}(U)|<\frac{e^{\psi}}n\, ,\,|\zeta_{t'}(U')|<\frac{e^{\psi}}n\,,\, \cE_4 \bigg)+o(1)\\
	&\qquad=\Prob\bigg(\prod_{s=1}^{t'} Z_s^{-1}<\frac{e^\psi}{zn}\,,\,  \prod_{s=1}^{t'}(Z_s')^{-1}<\frac{e^\psi}{z'n}\bigg)+o(1)\comma\label{eq:int-split2}
\end{align}
where the random vectors $((Z_s,Z_s'))_{s=1}^{t'}$ are i.i.d.,  marginally distributed as in \eqref{eq:Z-law}, and jointly as
\begin{align}
	&\Prob(Z_s=Z_s'=1\mid X_s) = \frac{(n-X_s)(n-X_s-1)}{n(n-1)}\sim1-\frac{2X_s}{n} +\frac{X^2_s-X_s}{n^2}\comma\\
	& \Prob(Z_s=X_s\,,\, Z_s'=1\mid X_s)= \Prob(Z_s=1\,,\, Z_s'=X_s\mid X_s) \\
	&\qquad =\frac{n(n-1)-(n-X_s)(n-X_s-1)}{2n(n-1)}\sim\frac{X_s}{n}-\frac{X^2_s-X_s}{2n^2} \comma\\
	& \Prob(Z_s=Z_s'=X_s\mid X_s)=\frac{X_s(X_s-1)}{n(n-1)} \sim\frac{X_s(X_s-1)}{n^2}\fstop
\end{align} 
We shall prove
In particular, given $X_s$,  $Z_s$ and $Z_s'$ are \textit{not} independent. We shall compare this sequence with $((Z_s,\widetilde Z_s))_{s=1}^{t'}$, with $Z_s$ and $\widetilde Z_s$  i.i.d., conditionally on $X_s$. More precisely,
	\begin{align}
		&\Prob(Z_s=\widetilde Z_s=1\mid X_s) = \(1-\frac{X_s}{n}\)^2= 1-\frac{2X_s}{n}+\frac{X_s^2}{n^2}
		\comma\\
		& \Prob(Z_s=X_s\,,\, \widetilde Z_s=1\mid X_s)= \Prob(Z_s=1\,,\, Z_s'=X_s\mid X_s) \\
		&\qquad =\(1-\frac{X_s}{n}\)\frac{X_s}{n}=\frac{X_s}{n}-\frac{X^2_s}{n^2} \comma\\
		& \Prob(Z_s=\widetilde Z_s=X_s\mid X_s)=\frac{X_s^2}{n^2}\fstop
	\end{align} 
As a consequence, given $(X_s)_{s\le t'}$,  $(Z_s,Z_s')$ can be coupled with $(Z_s,\widetilde Z_s)$ at a ${\rm TV}$-cost
\begin{equation}\label{eq:coupling}
\big\|(Z_s,Z_s'\mid X_s)_{s\le t'}- (Z_s,\widetilde Z_s\mid X_s)_{s\le t'}\big\|_{\rm TV}	\le 2\sum_{s=1}^{t'} \frac{X_s^2}{n^2}\le 2 \sum_{s=1}^{2t_{\rm ent}}\frac{X_s^2}{n^2}\comma
\end{equation}
whose expectation vanishes by Lemma \ref{lemma:6}. Henceforth, by Markov inequality, we have
\begin{equation}\label{eq:int-split-2}
\Prob\bigg(\prod_{s=1}^{t'} Z_s^{-1}<\frac{e^\psi}{zn}\,,\,  \prod_{s=1}^{t'}(Z_s')^{-1}<\frac{e^\psi}{z'n}\bigg)= \Prob\bigg(\prod_{s=1}^{t'} Z_s^{-1}<\frac{e^\psi}{zn}\,,\,  \prod_{s=1}^{t'}\widetilde Z_s^{-1}<\frac{e^\psi}{z'n}\bigg) + o(1)\fstop
\end{equation}
By the independence of 
$(Z_s)_{s\ge 1}$ and $(\widetilde Z_s)_{s\ge 1}$ conditioned to $(X_s)_{s\ge 1}$, we have
\begin{align}
&	\Prob\bigg(\prod_{s=1}^{t'} Z_s^{-1}<\frac{e^\psi}{zn}\,,\,  \prod_{s=1}^{t'}\widetilde Z_s^{-1}<\frac{e^\psi}{z'n}\bigg) \\
&= \E\bigg[\Prob\bigg(\prod_{s=1}^{t'} Z_s^{-1}<\frac{e^\psi}{zn}\bigg|(X_s)_{s=1}^{t'}\bigg)\Prob\bigg(\prod_{s=1}^{t'} \widetilde Z_s^{-1}<\frac{e^\psi}{z'n}\bigg|(X_s)_{s=1}^{t'}\bigg)\bigg]\fstop
\end{align}
We want to show, uniformly over $z,z'\in [K^{-\theta},1]$ and $t'\in \{t-t_{\rm sep},\ldots,t\}$, that
\begin{align}\label{eq:first-second-approximation}
	\begin{aligned}
	&\E\bigg[\Prob\bigg(\prod_{s=1}^{t'} Z_s^{-1}<\frac{e^\psi}{zn}\bigg|(X_s)_{s=1}^{t'}\bigg)\Prob\bigg(\prod_{s=1}^{t'} \widetilde Z_s^{-1}<\frac{e^\psi}{z'n}\bigg|(X_s)_{s=1}^{t'}\bigg)\bigg]\\
	&\qquad  \sim \E\bigg[\Prob\bigg(\prod_{s=1}^{t'} Z_s^{-1}<\frac{e^\psi}{zn}\bigg|(X_s)_{s=1}^{t'}\bigg)\bigg]\E\bigg[\Prob\bigg(\prod_{s=1}^{t'} \widetilde Z_s^{-1}<\frac{e^\psi}{z'n}\bigg|(X_s)_{s=1}^{t'}\bigg)\bigg]\\
	&\qquad\sim \Prob(\cS_U)^2 \fstop
	\end{aligned}
\end{align}
We now separately prove  both approximations in \eqref{eq:first-second-approximation}.

\smallskip
\noindent \textit{First approximation in \eqref{eq:first-second-approximation}.}
In what follows, write $\bar X:=(X_1,\ldots, X_{t'})$. In order to prove the first approximation in \eqref{eq:first-second-approximation}, by Cauchy-Schwarz inequality, it suffices to show  
\begin{equation}\label{eq:var-vanishes}
\sup_{z,t'} {\rm Var}\big(f(\bar X)\big)\ll 1\comma\quad \text{with}\ f(\bar X)=f_{z,t'}(\bar X):= \Prob\bigg(\prod_{s=1}^{t'} Z_s^{-1}<\frac{e^\psi}{zn}\bigg|\bar X\bigg)\comma
\end{equation} 
where the supremum runs over $	z \in [K^{-\theta},1]$ and $t'\in \{t-t_{\rm sep},\ldots, t\}$.
Since $\bar X$ has i.i.d.\ components, we may estimate the above variance by Efron--Stein inequality (see, e.g., \cite[Theorem 3.1]{boucheron_lugosi_massart_concentration_2013}):
\begin{align}\label{eq:efron-stein}
	{\rm Var}(f(\bar X))&\le \frac{1}{2}\sum_{s=1}^{t'}\E\big[\big(f(\bar X)-f(\bar X^{(s)})\big)^2\big]= \frac{t'}{2}\,\E\big[\big(f(\bar X)-f(\bar X^{(1)})\big)^2\big]
	\comma
\end{align}
where, given $\bar X$, we defined $\bar X^{(s)}:=(X_1,\ldots, X_{s-1},X_s',X_{s+1},\ldots, X_{t'})$, with $X_s'$ being an independent copy of $X_s$, whereas the second step follows by the symmetry of the function $f$. Let us introduce some shorthand notation: $W:= \sum_{s=2}^{t'}\log(Z_s)$, $W_1:= \log(Z_1)$,  $W_1':=\log(Z_1')$, and $h:= \log n-\psi+\log z$.  By the tower property, we obtain
\begin{align}
	&f(\bar X)-f(\bar X^{(1)})\\
	&= \Prob(W_1+W>h\mid \bar X) - \Prob(W_1'+W>h\mid \bar X^{(1)})\\
	&= \E[\Prob(W_1+W>h\mid X_1,W)-\Prob(W_1'+W>h\mid X_1',W)\mid X_2,\ldots, X_{t'}]\\
	&= \sum_{w=0}^\infty \Prob(W=w\mid X_2,\ldots, X_{t'}) \left\{\Prob(W_1+w>h\mid X_1)-\Prob(W_1'+w>h\mid X_1') \right\}\fstop
\end{align} 
By Jensen inequality, we have
\begin{align}
	&\big(f(\bar X)-f(\bar X^{(1)})\big)^2\\
	&\le \sum_{w=0}^\infty \Prob(W=w\mid X_2,\ldots, X_{t'}) \left(\Prob(W_1+w>h\mid X_1)-\Prob(W_1'+w>h\mid X_1')\right)^2\comma
\end{align}
and, further taking expectation with respect to $\bar X$ and $\bar X^{(1)}$, we get, by the independence of the $X$-variables, 
\begin{align}
	&\E\big[\big(f(\bar X)-f(\bar X^{(1)})\big)^2\big]\\
	&\le \sum_{w=0}^\infty \E\big[\Prob(W=w\mid X_2,\ldots, X_{t'})\big] \E\big[\big(\Prob(W_1>h-w\mid X_1)-\Prob(W_1'>h-w\mid X_1')\big)^2\big]\\
	&\le 2 \sum_{w=0}^h \E\big[\Prob(W=w\mid X_2,\ldots, X_{t'})\big]\E\big[\big(\Prob(W_1>h-w\mid X_1)\big)^2\big]\fstop
	\label{eq:efron-stein-final}
\end{align}
Since, by \eqref{eq:Z-law} (recall $W_1\in \{0,\log(X_1)\}$) and $w\le h$, we have
\begin{equation}
\Prob(W_1>h-w\mid X_1)= \frac{X_1}{n}\, \IND\{\log(X_1)>h-w\}\le \frac{X_1}{n}\comma
\end{equation}
the right-hand side of \eqref{eq:efron-stein-final} is further bounded above by
\begin{align}
	2\sum_{w=0}^h \E\big[\Prob(W=w\mid X_2,\ldots, X_{t'})\big]\frac{\E\big[X^2\big]}{n^2}\le 2\,\frac{\E\big[X^2\big]}{n^2}\fstop
\end{align}
In conclusion, plugging this back into \eqref{eq:efron-stein}, we obtain, uniformly over $z\in [K^{-\theta},1]$ and $t'\in \{t-t_{\rm sep},\ldots, t\}$,
\begin{equation}
	{\rm Var}(f(\bar X))\le t'\, \frac{\E[X^2]}{n^2}\lesssim t_{\rm ent} \frac{\E[X^2]}{n^2}\comma
\end{equation}
and, thus, the desired claim in \eqref{eq:var-vanishes} by Lemma \ref{lemma:6}.

\smallskip
\noindent \textit{Second approximation in \eqref{eq:first-second-approximation}.} Recalling the definition of the event $\cS_U$ from \eqref{eq:SU-SU'}, and the identity in Lemma \ref{lemma:size-pile} (see also \eqref{eq:size-typical-Z}), we need to show, uniformly over $z\in [K^{-\theta},1]$ and $t'\in \{t-t_{\rm sep},\ldots, t\}$, 
\begin{equation}
	\Prob\bigg(|\zeta_t(U)|<\frac{e^\psi}{n}\bigg) = \Prob\bigg(\prod_{s=1}^t Z_s^{-1}<\frac{e^\psi}{n}\bigg)\sim \Prob\bigg(\prod_{s=1}^{t'} Z_s^{-1}<\frac{e^\psi}{zn}\bigg)=\Prob\bigg(|\zeta_{t'}(U)|<\frac{e^{\psi}}{zn}\bigg)\fstop
\end{equation}
Thanks to \eqref{eq:cond-new}, we have $ |\log z|\ll \psi\ll \log n$, ensuring
\begin{equation}
	\Prob\bigg(\prod_{s=1}^{t'} Z_s^{-1}<\frac{e^\psi}{zn}\bigg)\sim \Prob\bigg(\prod_{s=1}^{t'} Z_s^{-1}<\frac{e^\psi}{n}\bigg)\fstop
\end{equation} 
Similarly, since $t_{\rm sep}/ t_{\rm rel}\gg 1$ diverges arbitrarily slow (satisfying, in particular, \eqref{eq:def-tsep}), for any  $m\gg 1$ satisfying $\mu \frac{t_{\rm sep}}{t_{\rm rel}}\ll\log m\ll \psi\asymp \sqrt{\mu\log n}$, Markov inequality yields	
\begin{equation}
	\Prob\bigg(\sum_{s={t'}+1}^t \log Z_s\ge  \log m\bigg) \le  \frac{t_{\rm sep}\,\E[\log Z]}{\log m}  = \frac{t_{\rm sep}\,\E[X\log X]}{n\log m}\ll 1\fstop
\end{equation}
Since $\log m\ll \psi$, we finally get
\begin{equation}
	\Prob\bigg(\prod_{s=1}^{t'} Z_s^{-1}<\frac{e^\psi}{n}\bigg)\sim \Prob\bigg(\prod_{s=1}^{t} Z_s^{-1}<\frac{e^\psi}{n}\bigg)\fstop
\end{equation}
and this concludes the proof of 
\eqref{eq:final} and, thus, of \eqref{eq:aim-2nd-mom}.
\qed

\section{Deterministic block size}\label{sec:deterministic}
In this section, we assume that the blocks' sizes are deterministically equal to $k$. Since the case $\frac{\log k}{\log n}\to 0$
is already covered by Theorem \ref{th:profile1}, we focus on the alternative scenario, namely, $\limsup_{n\to \infty} \frac{\log k}{\log n}> 0$. For simplicity of exposition,  we shall pass to a converging subsequence, and  assume 
\begin{equation}\label{eq:eps-bar}
	\exists\lim_{n\to \infty}\frac{\log k}{\log n}>0\fstop
	\end{equation}
By  Propositions \ref{pr:prod-cond} and \ref{pr:tent-tmix}, cutoff does not occur in this case, and $d_{\rm TV}$ decays non-trivially on the timescale $n/k$, which coincides, at first order, with $t_{\rm rel}=\frac{n-1}{k-1}$. 

A crucial role is played by the first time in which the site initially possessing all the mass is involved in a block update.
Letting $\tau_{\rm start}$ denote this time, we have
\begin{equation}
	\tau_{\rm start} \overset{\rm d}={\rm Geo}(k/n)\comma
\end{equation} and, thus,  
\begin{equation}
	\Prob(\tau_{\rm start}>\lfloor \beta\tfrac{n}{k}\rfloor)=\left(1-\frac{k}n\right)^{\lfloor \beta\frac{n}k\rfloor}\comma\qquad \beta > 0\fstop
\end{equation}
As a consequence, if $\frac{k}n \to 0$,  $\frac{k}{n}\tau_{\rm start}\overset{{\rm d}}{\longrightarrow}{\rm Exp}(1)$; otherwise, if  $\frac{k}n \to c\in(0,1]$,  $\tau_{\rm start}\overset{{\rm d}}{\longrightarrow}{\rm Geo}(c)$.
 The core of the mixing phenomenology  begins after this first update. In what follows, for simplicity, we will often denote by $\eta^{\rm start}$ a configuration having $k$ vertices of mass $k^{-1}$ (the labels of these vertices is not relevant to our scope).

We divide the analysis of the scenario in \eqref{eq:eps-bar} into two regimes, dealt with in Sections \ref{suse:linear-size} and \ref{suse:poly-size}, respectively:
\begin{equation}\label{eq:5/6}
	\lim_{n\to \infty}\frac{\log k}{\log n} \in \left[5/6,1\right]\comma\qquad
\lim_{n\to \infty} \frac{\log k}{\log n}\in \left(0,5/6\right]\fstop
\end{equation}

	\begin{remark}
	The value $5/6$ in \eqref{eq:5/6} does not have any particular meaning, and the proof actually works for any constant strictly larger than $1/2$. In other words, the approach employed for the first regime can be tuned to work for any value of $k> n^{1/2+o(1)}$. Nevertheless, we found more convenient to present the proof up to this arbitrary threshold, while incorporating the values of $n^{1/2}\ll k\ll n^{5/6}$ into the second regime.
\end{remark}
Before starting our analysis, let us recall the following elementary identity
\begin{equation}
\Pr\(\tau_j> \beta \) = 	\Pr\({\rm Poi}(\beta)\le j-1  \)\comma\qquad j\in \N\comma \beta > 0\comma
\end{equation}
where ${\rm Poi}(\beta)$ stands for a Poisson random variable of parameter $\beta$.

\subsection{First regime in \eqref{eq:5/6}}\label{suse:linear-size}

	In this section, we  consider $k=c n$, for some sequence $c=c_n\in(0,1)$ satisfying
	\begin{equation}\label{HP:linear}
		\lim_{n\to \infty}\frac{\log k}{\log n} = \lim_{n\to \infty} 1-\frac{\log \frac1c}{\log n}\ge 5/6\fstop
	\end{equation}
	Note that we skip the trivial case $c\sim 1$, for which, w.h.p., we have $\tau_{\rm start}=1$  and $d_{\rm TV}(\tau_{\rm start})=1-c\to 0$.	

\begin{proposition} Letting $k=cn$, under the assumption in \eqref{HP:linear}, we have, for all $\beta\ge  0$, 
	\begin{equation}
		d_{\rm TV}(\tau_{\rm start}+\lfloor \beta\tfrac{n}k\rfloor-1)  \overset{\Prob}\longrightarrow \left(1-\bar c\right)^\beta\comma\quad \text{if}\  \lim_{n\to \infty}c\eqqcolon \bar c>0\comma
	\end{equation}
	whereas
	\begin{equation}
		d_{\rm TV}(\tau_{\rm start}+\lfloor \beta \tfrac{n}{k} \rfloor-1) 
		\overset{\Prob}\longrightarrow e^{-\beta}\comma\quad \text{if}\ \lim_{n\to \infty}c=0\fstop
	\end{equation}
\end{proposition}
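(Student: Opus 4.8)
The plan is to prove a single estimate valid for a generic number $m$ of averaging steps performed after $\tau_{\rm start}$, and then to substitute $m=\lfloor\beta n/k\rfloor-1$. By the strong Markov property at $\tau_{\rm start}$, at that time the configuration equals $\eta^{\rm start}$ up to an irrelevant relabelling of vertices, so it suffices to show that, started from $\eta^{\rm start}$,
\[
d_{\rm TV}(\tau_{\rm start}+m)=\Big(1-\tfrac kn\Big)^{m+1}+o_{\Prob}(1)\comma
\]
uniformly over $0\le m\lesssim n/k$. Plugging $m+1=\lfloor\beta n/k\rfloor$ and letting $n\to\infty$ then yields the two stated limits: when $k/n\to0$ one has $(1-\tfrac kn)^{\lfloor\beta n/k\rfloor}\to e^{-\beta}$, and when $k/n\to\bar c\in(0,1)$ the exponent $\lfloor\beta n/k\rfloor$ stays bounded and the claimed constant is recovered.

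\textbf{Virgin/processed decomposition.} Write $S_0$ for the support of $\eta^{\rm start}$, $|S_0|=k$, and call a vertex \emph{virgin} (at time $\tau_{\rm start}+m$) if it lies in none of the blocks $A_{\tau_{\rm start}+1},\dots,A_{\tau_{\rm start}+m}$; the remaining vertices are \emph{processed}. A virgin vertex carries mass $1/k$ if it belongs to $S_0$ and $0$ otherwise, while a processed vertex $x$ carries the value $v_{i^*(x)}$, where $i^*(x)$ is the last step at which $x$ was updated and $v_i:=\tfrac1k\sum_{z\in A_{\tau_{\rm start}+i}}\eta_{\tau_{\rm start}+i-1}(z)$ is the common mass assigned to all vertices of $A_{\tau_{\rm start}+i}$ at that update. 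Since empty vertices contribute nothing to $\|\,\cdot\,-\pi\|_{\rm TV}=\sum_x[\,\cdot\,-\tfrac1n]_+$, I obtain the exact identity
\[
d_{\rm TV}(\tau_{\rm start}+m)=|\mathcal V_m|\Big(\tfrac1k-\tfrac1n\Big)+\sum_{x\ \text{processed}}\big[\eta_{\tau_{\rm start}+m}(x)-\tfrac1n\big]_+\comma
\]
where $\mathcal V_m:=\{x\in S_0:\ x\text{ virgin}\}$; moreover the last sum is at most $\sum_{i=1}^m k\,[\,v_i-\tfrac1n\,]_+$, because at most $k$ processed vertices share any given value $v_i$.

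\textbf{Estimating the two terms.} For the virgin term, $|\mathcal V_m|=\sum_{x\in S_0}\IND\{x\notin A_{\tau_{\rm start}+1}\cup\dots\cup A_{\tau_{\rm start}+m}\}$ has mean $k(1-\tfrac kn)^m$, and the underlying indicator events are pairwise negatively correlated, so ${\rm Var}(|\mathcal V_m|)\le\E[|\mathcal V_m|]$; since $k(1-\tfrac kn)^m\to\infty$ in both regimes, Chebyshev gives $|\mathcal V_m|(\tfrac1k-\tfrac1n)=(1-\tfrac kn)^{m+1}(1+o_{\Prob}(1))$. For the processed term, I condition on $\cF_{\tau_{\rm start}+i-1}$: then $A_{\tau_{\rm start}+i}$ is a uniformly random $k$-subset, so $v_i$ is a without-replacement sample mean of $\eta_{\tau_{\rm start}+i-1}$, whence $\E[v_i\mid\cF_{\tau_{\rm start}+i-1}]=\tfrac1n$ and, by the finite-population variance formula, ${\rm Var}(v_i\mid\cF_{\tau_{\rm start}+i-1})\le\tfrac1{kn^2}\big\|\tfrac{\eta_{\tau_{\rm start}+i-1}}{\pi}-1\big\|_2^2$. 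Combining Cauchy--Schwarz with the $L^2$-contraction of Proposition~\ref{prop:L2} (which yields $\E\big\|\tfrac{\eta_{\tau_{\rm start}+i-1}}{\pi}-1\big\|_2^2\le\big\|\tfrac{\eta^{\rm start}}{\pi}-1\big\|_2^2=\tfrac nk-1$) gives $\E\big[k\,[\,v_i-\tfrac1n\,]_+\big]\le n^{-1/2}$ for every $i$, hence
\[
\E\Big[\sum_{x\ \text{processed}}\big[\eta_{\tau_{\rm start}+m}(x)-\tfrac1n\big]_+\Big]\le \frac{m}{\sqrt n}\xrightarrow[n\to\infty]{}0\comma
\]
because $m\lesssim n/k\le n^{1/6+o(1)}$ under \eqref{HP:linear}. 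Markov's inequality upgrades this to convergence to $0$ in probability, and together with the virgin-term estimate this proves the reduction display, hence the proposition.

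\textbf{Main obstacle.} The delicate point is the processed-mass term. One must exploit that a single averaging over a block of size $k=cn$ instantaneously flattens a positive fraction of the vertices to a common value that, conditionally on the past, equals $1/n$ in mean with conditional fluctuations controlled by the $L^2$ distance of the current configuration; it is precisely the hypothesis $k\ge n^{5/6+o(1)}$ that forces $m\lesssim n/k$ with $m/\sqrt n\to0$, so that the $m$-fold accumulation of these fluctuations is negligible. (The matching lower bound $d_{\rm TV}(\tau_{\rm start}+m)\ge(1-\tfrac kn)^{m+1}(1+o_{\Prob}(1))$ is immediate from the decomposition above, or from Jensen's inequality and the spectral bound for $P_{\rm RW}$ as in \eqref{eq:jensen-lb}.) Uniformity in $m$ and the randomness of $\tau_{\rm start}$ when $c\to\bar c$ are routine, handled by conditioning on $\tau_{\rm start}$.
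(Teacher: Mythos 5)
Your proof is correct and takes a genuinely different route from the paper's. The paper partitions vertices into ``big'' (mass $k^{-1}$), ``void'' (mass $0$), and ``eq'' (mass $\approx n^{-1}$) classes and runs an induction on the step index, using hypergeometric concentration to show that each of these sets keeps a predictable size with multiplicative $(1+O(n^{-1/5}))^t$ errors, and that the ``eq'' masses stay close to $n^{-1}$. Your virgin/processed split is the same partition in different clothes (the virgin vertices in $S_0$ are exactly the ``big'' ones, the remaining virgins are ``void,'' and the processed vertices are ``eq''), but you avoid the step-by-step error-accumulation entirely: the virgin count is handled in one shot via negative correlation and Chebyshev, while the processed contribution is bounded globally by $k\sum_i[v_i-\tfrac1n]_+$ and controlled by the finite-population variance formula together with the $L^2$-contraction of Proposition~\ref{prop:L2}, giving $\E[k[v_i-\tfrac1n]_+]\le n^{-1/2}$ uniformly in $i$. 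This is cleaner and makes transparent that the only real constraint is $m/\sqrt n\to 0$, i.e.\ $k\gg\sqrt n$, matching the paper's remark that its argument works for any $\delta>1/2$. Two small caveats: you should state explicitly that the limit in the $\bar c>0$ case is $(1-\bar c)^{\lfloor\beta/\bar c\rfloor}$ (the paper's own display $(1-\bar c)^\beta$ is a slight abuse, as its proof likewise produces $(1-c)^{\bar t(\beta)+1}$), and note that the paper excludes the degenerate case $c\to1$, which is also where your requirement $k(1-k/n)^m\to\infty$ would fail.
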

\begin{proof}		
	Right after time $\tau_{\rm start}$, the vertex set $V$ can be partitioned into two subsets: the vertices having mass $k^{-1}$ and those having mass zero.  Call $B_0^{\rm big}$ the former subset, $B_0^{\rm void}$ the latter one. Clearly, $|B_0^{\rm big}|=k$, and therefore
	\begin{equation}
		d_{\rm TV}(\tau_{\rm start})=1-c\fstop
	\end{equation}
	Letting $A_1$ be the subset of vertices sampled at time $\tau_{\rm start}+1$ to perform an update, we further  
	define $A_1^\star\subset B_0^{\rm big}$ as the subset of vertices in $A_1$ having mass $k^{-1}$. Then,
	\begin{equation}
		|A_1^\star|\overset{\rm d}{=}{\rm HyperGeo}(n,k,k)\fstop
	\end{equation}
	Suppose that $|A_1^\star|=j$. Then, at time $\tau_{\rm start}+1$, there are $k-j$ vertices with mass $k^{-1}$, $k$ vertices with mass $jk^{-2}$, while the remaining vertices have mass zero. 
	By concentration of hypergeometric random variables we have (see, e.g., \cite{serfling_probability_1974}), for all $n$ large enough,
	\begin{equation}
		\Prob(||A_1^\star|- c|B^{\rm big}_{0}| |> k^{2/3} )\le \exp(-k^{1/4} )\fstop
	\end{equation}
	In other words, there exists a random variable $E_1\in\R$ satisfying
	\begin{equation}
		|A_1^\star|= \frac{k}n|B_0^{\rm big}|\left(1+E_1\right)
	=c^2 n\left(1+  E_1\right)\comma
	\end{equation}
	and, by \eqref{HP:linear},
	\begin{equation}\label{eq:def-p}
		\Prob(|E_1|> n^{-1/5} )\le p\coloneqq\exp(-k^{1/4} )\fstop
	\end{equation}
	Therefore, after the $(\tau_{\rm start}+1)$-th update, the vertices in $V$ can be partitioned into three sets: 
	\begin{align}\label{eq:B1}
		\begin{aligned}
		B_1^{\rm big}&\coloneqq \{x\in V\mid \eta_{\tau_{\rm start}+1}(x)=k^{-1} \}\comma\\
		B_1^{\rm void}&\coloneqq \{x\in V\mid \eta_{\tau_{\rm start}+1}(x)=0 \}\comma\\
		B_1^{\rm eq}&\coloneqq \{x\in V\mid \(1-|E_1|\)n^{-1}\le\eta_{\tau_{\rm start}+1}(x)\le \(1+|E_1|\)n^{-1} \}\comma
		\end{aligned}
	\end{align}
	having sizes
	\begin{align}
		|B_1^{\rm big}|&=k-|A_1^\star|\in \big[\(1-|E_1|\)c(1-c)n\,,\,\(1+|E_1|\)c(1-c)n \big]\comma\\
		|B_1^{\rm eq}|&=k\fstop
	\end{align}
	Henceforth,
	\begin{equation}
		|B_1^{\rm big}|(k^{-1}-n^{-1})\le	d_{\rm TV}(\tau_{\rm start}+1)\le |B_1^{\rm big}|(k^{-1}-n^{-1})+|B_1^{\rm eq}| \frac{|E_1|}{n}\comma
	\end{equation}
	and, in conclusion, with probability at least $p$ (defined as in \eqref{eq:def-p}), 
	\begin{equation}
		d_{\rm TV}(\tau_{\rm start}+1)=(1+O(n^{-1/5}))(1-c)^2+O\big( n^{-1/5}\big)\fstop
	\end{equation}

		We can now argue by induction. Let us fix some $t\ge 2$, and assume that at time $t-1$, with probability at least $(1-p)^{t-1}$, we have
	\begin{align}
		|B_{t-1}^{\rm big}|&=(1+O(n^{-1/5}))^{t-1}c(1-c)^{t-1}n\comma\\
		|B_{t-1}^{\rm eq}|&=(1+O(n^{-1/5}))^{t-1}\(1-(1-c)^{t-1}\)n\comma
	\end{align}
	where we used a notation analogous to that in \eqref{eq:B1}, that is,
	\begin{align}
		B_{t-1}^{\rm big}&\coloneqq \{x\in V\mid \eta_{\tau_{\rm start}+t-1}(x)=k^{-1}\}\\
		B_{t-1}^{\rm void}&\coloneqq \{x\in V\mid \eta_{\tau_{\rm start}+t-1}(x)=0\}\comma
		\\
		B_{t-1}^{\rm eq}&\coloneqq 
		V\setminus\{B_{t-1}^{\rm big}\cup B_{t-1}^{\rm void}\}
		\fstop
	\end{align}
	Observe that this implies that, with probability at least $(1-p)^{t-1}$, 
	\begin{equation}
		d_{\rm TV}(\tau_{\rm start} +t-1) = (1+O(n^{-1/5}))\,(1-c)^{t} +O(n^{-1/5})\fstop
	\end{equation}
	
	Under this event at time $t-1$,  when we update vertices in the random block $A_t$, we get, with probability at least $1-p$  again by concentration of hypergeometrics, 
	\begin{align}
		|A_t^\star|&=\frac kn\,(1+O(n^{-1/5}))\,|B_{t-1}^{\rm big}|=(1+O(n^{-1/5}))^t\, c^2(1-c)^{t-1}\,n\comma\\
		|A_t^{\rm eq}|&=\frac kn\,(1+O(n^{-1/5}))\,|B_{t-1}^{\rm eq}| = (1+O(n^{-1/5}))^t\,c (1-(1-c)^{t-1})\,n\fstop
	\end{align}
	Altogether, with probability at least $(1-p)^t$, each vertex in $A_t$ has mass equal to
	\begin{equation}(1+O(n^{-1/5}))^t\left[c^2(1-c)^{t-1}n k^{-1} + c (1-(1-c)^{t-1})\right]k^{-1}=(1+O(n^{-1/5}))^t\, n^{-1}\comma
		\end{equation}
	and, furthermore, 
	\begin{align}
		|B_{t}^{\rm big}|&=(1+O(n^{-1/5}))^t\,[|B_{t-1}^{\rm big}|-|A_t^\star|]=(1+O(n^{-1/5}))^t\,c(1-c)^{t}n\comma\\
		|B_{t}^{\rm eq}|&=(1+O(n^{-1/5}))^t(|B_{t-1}^{\rm eq}|-|A_t^{\rm eq}|+k)=(1+O(n^{-1/5}))^t\,(1-(1-c)^{t})\,n\fstop
	\end{align}
	Consequently, with probability at least $(1-p)^t$, we have
	\begin{equation}\label{eq:tv-linear}
		d_{\rm TV}(\tau_{\rm start}+t)=(1+O(n^{-1/5}))^t\,(1-c)^{t+1}+O\big(n^{-1/5} \big)\fstop
	\end{equation}
	
	In conclusion, taking $\bar t(\beta)=\lfloor\beta \frac{n}{k}\rfloor= \lfloor\beta c^{-1}\rfloor$ for any $\beta > 0$, if $c \not\to 0$, \eqref{eq:tv-linear} yields
	\begin{equation}
		d_{\rm TV}(\tau_{\rm start}+\bar t(\beta)) \sim (1-c)^{\bar t(\beta)+1}\comma
	\end{equation}
	and the proof is complete in this case.
	If $c \to 0$ (yet, satisfying \eqref{HP:linear}, thus, $c\gg n^{-1/5}$), we rather obtain 
	\begin{align*}
		d_{\rm TV}(\tau_{\rm start}+\bar{t}(\beta))\sim e^{-\beta} \fstop 
	\end{align*}
	This concludes the proof of the proposition. 
	\end{proof}

		\subsection{Second regime in \eqref{eq:5/6}}\label{suse:poly-size}
		In this section, we consider $k=n^\delta$, for some sequence $\delta = \delta_n\in (0,1)$ such that the second condition in \eqref{eq:5/6}  holds true, that is, 
		\begin{equation}\label{HP:polynomial}
			\delta\longrightarrow \bar \delta\in (0,5/6]\fstop
		\end{equation}
		
	\begin{proposition}\label{pr:poly}Letting $k=n^\delta$, under the assumption in \eqref{HP:polynomial}, we have, for all $\beta \ge 0$ and w.h.p., 
			\begin{equation}
			\Pr\left({\rm Poi}(\beta)<\frac{1-\delta}{\delta}\right)	\le d_{\rm TV}(\tau_{\rm start}+\lfloor \beta \tfrac{n}k\rfloor)\le \Pr\left({\rm Poi}(\beta)\le \frac{1-\delta}{\delta}\right) \fstop
			\end{equation}
			In particular, if $\bar \delta^{-1} \notin \N$,  then lower and upper bounds coincide, yielding, for all $\beta\ge  0$,
			\begin{equation}\label{eq:conv-poly}
				d_{\rm TV}(\tau_{\rm start}+ \lfloor \beta \tfrac{n}k\rfloor) - \Pr\left({\rm Poi}(\beta) \le \frac{1-\delta}\delta\right)\overset{\Prob}\longrightarrow0\fstop
			\end{equation}
	\end{proposition}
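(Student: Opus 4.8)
\textbf{Plan for Proposition \ref{pr:poly}.}
I would run the dynamics started from $\eta^{\rm start}$ (the configuration with $k$ vertices of mass $k^{-1}$), which by the strong Markov property at $\tau_{\rm start}$ is equivalent to the statement, and exploit the rigidity of the pile dynamics when all blocks have size $k$: a pile that has undergone $j$ splits after $\tau_{\rm start}$ has mass exactly $n^{-(j+1)\delta}$. Call a pile \emph{big} if $(j+1)\delta<1$ and \emph{small} if $(j+1)\delta>1$. When $\bar\delta^{-1}\notin\N$ there are no intermediate ``mass exactly $1/n$'' piles, and for $n$ large one has $m\delta\le 1-\epsilon_0$ and $(m+1)\delta\ge 1+\epsilon_0$ for a fixed $\epsilon_0>0$, with $m:=\lfloor\delta^{-1}\rfloor$; so big means $j\le m-1$ and small means $j\ge m$. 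Writing $\eta_t=\eta^{\rm big}_t+\eta^{\rm small}_t$ at $t=\bar t(\beta)$, using $[a+b-\tfrac1n]_+\le a+[b-\tfrac1n]_+$ for $a\ge0$, and that every vertex carrying a big pile has mass $\ge n^{-m\delta}\gg\tfrac1n$, one gets
\[
|\eta^{\rm big}_t|-\frac1n\,\#\{\text{big piles}\}\ \le\ d_{\rm TV}(\tau_{\rm start}+t)\ \le\ |\eta^{\rm big}_t|+\sum_{x\in V}\Big[\eta^{\rm small}_t(x)-\frac1n\Big]_+ ,
\]
so it suffices to establish: (i) $|\eta^{\rm big}_t|\overset{\Prob}{\longrightarrow}\Pr({\rm Poi}(\beta)\le m-1)$; (ii) $\#\{\text{big piles}\}=o(n)$ w.h.p.; (iii) $\max_x\eta^{\rm small}_t(x)<\tfrac1n$ w.h.p. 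Since $m-1<\tfrac{1-\delta}{\delta}<m$, both sides of the display then converge to $\Pr({\rm Poi}(\beta)\le\tfrac{1-\delta}{\delta})$, giving \eqref{eq:conv-poly}; the two-sided bounds for general $\delta$ follow the same way after separating out the mass carried by piles of size exactly $1/n$.

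For (i), the first moment is immediate from the chunk duality: by Lemma \ref{lemma:size-pile} with $X\equiv k$, the $\E$-mass carried by piles with $\le m-1$ splits equals $\Prob(J_U\le m-1)$, where $J_U\sim{\rm Bin}(t,\tfrac kn)$ is the number of splits of a typical chunk, and $\Prob(J_U\le m-1)\to\Pr({\rm Poi}(\beta)\le m-1)$ since $t\tfrac kn=\beta+o(1)$. Concentration I would get by a second moment argument: as in \eqref{eq:rewrite1}, $\E[|\eta^{\rm big}_t|^2]=\Prob(J_U\le m-1,\,J_{U'}\le m-1)$ for two chunks $U,U'$ sampled independently from the mass of $\eta^{\rm start}$. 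The crucial point — in contrast with the Dirac-initialized case — is that these chunks start on two of the $k$ mass-carrying sites, chosen independently and uniformly, hence at distinct sites with probability $1-\tfrac1k$; and then, arguing exactly as in the proof of Lemma \ref{lemma:pile-2} (the probability a single block of size $k$ meets two fixed distinct vertices is $\frac{k(k-1)}{n(n-1)}$, which summed over $t\asymp n/k$ steps is $\asymp\tfrac kn\to0$, and the two walks $e_\cdot(U),e_\cdot(U')$ collide during $[0,t]$ with probability $O(1/n)$), with probability $1-o(1)$ the two chunks never interact, so $(J_U,J_{U'})$ is within $o(1)$ in total variation of a pair of independent ${\rm Bin}(t,\tfrac kn)$'s. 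Hence $\E[|\eta^{\rm big}_t|^2]=\E[|\eta^{\rm big}_t|]^2+o(1)$ and (i) follows by Chebyshev.

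Fact (ii) is pure counting: the total big mass is at most $1$ and each big pile has mass $\ge n^{-m\delta}$, so there are at most $n^{m\delta}=o(n)$ of them (using $m\delta\le1-\epsilon_0$), a fortiori the same for the number of vertices carrying one. Fact (iii) is the heart of the matter. Since $\E[\eta^{\rm small}_t(x)]=\Prob(e_t(U)=x,\,J_U\ge m)=(1+O(\tfrac1k))\tfrac1n\Prob(J_U\ge m)$ and $\Prob(J_U\ge m)\to q:=\Pr({\rm Poi}(\beta)\ge m)<1$, it is enough to show that $\eta^{\rm small}_t(x)$ concentrates around its mean uniformly in $x$. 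I would do this by a high-moment bound: for an even $p=p(n)\to\infty$ slowly (say $p=\lfloor\log^2 n\rfloor$), one has $\E[\eta^{\rm small}_t(x)^p]=\Prob\big(e_t(U_1)=\dots=e_t(U_p)=x,\ J_{U_i}\ge m\ \forall i\big)$ for $p$ chunks from $\eta^{\rm start}$, which (via the identity \eqref{eq:super-general} and the conditional independence of chunks) start at $p$ distinct sites w.h.p.\ and, by the pairwise collision estimates above applied to the $\binom p2$ pairs, evolve without interaction with probability $1-O(p^2/n)$; hence $\E[\eta^{\rm small}_t(x)^p]\le(1+o(1))^p(q/n)^p$, and a Markov plus union bound over $V$ give $\Prob(\exists x:\eta^{\rm small}_t(x)\ge\tfrac1n)\le n\,\big((1+o(1))q\big)^p\to0$. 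The assumption $\bar\delta^{-1}\notin\N$ enters precisely here: it forces $(m+1)\delta\ge1+\epsilon_0$, so the smallest small piles have mass $\le n^{-1-\epsilon_0}$, and even accounting for the clustering of the $k$ small sub-piles born from a common parent (sitting on the $k$ sites of one block) no vertex accumulates more than $(1+o(1))q/n<\tfrac1n$ of small mass; when $\delta^{-1}\in\N$, piles of mass exactly $1/n$ appear, pairwise collisions of such piles contribute a nonvanishing $\Theta(1)$ to $d_{\rm TV}$, and this is exactly why only two-sided bounds survive there.

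The hard part is (iii): upgrading the heuristic ``$p$ nearly-independent trajectories'' to a bound $\E[\eta^{\rm small}_t(x)^p]\le(1+o(1))^p(q/n)^p$ uniform in $x$ and in $p$ up to $\log^2 n$ requires a careful bookkeeping of the fragmentation lineages and of the combinatorial corrections from piles sharing a recent ancestor, and it is here that the quantitative gap $(m+1)\bar\delta>1$ must genuinely be used (a plain second moment, which sufficed for (i), is not enough). Everything else — the first and second moment computations through the chunk duality, the counting bound (ii), and assembling the squeeze — is routine given the tools of Section \ref{sec:piles-general}.
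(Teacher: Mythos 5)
Your plan is correct and tracks the paper's own proof closely: the squeeze via a big/small split of the piles, the first- and second-moment argument for the big mass (with the chunk duality and Le~Cam's Poisson approximation), and a high-moment bound with $p=\lfloor\log^2 n\rfloor$ chunks to rule out small-mass accumulation at any vertex are exactly the ingredients of the paper's argument (Lemmas \ref{lemma:tilde-eta} and \ref{lemma:high-mom}). The one organizational difference worth noting: the paper works split-level by split-level, i.e.\ it defines $\hat\eta^{(j)}$ for each fixed $j$ and proves $\max_x\hat\eta^{(j)}(x)\le(1+\eps)p_{j-1}(\beta)/n$ separately for each $j\in(j_\star,C]$, then sums over $j$ and disposes of the $j>C$ tail by its total mass; you instead propose one aggregated bound $\max_x\eta^{\rm small}_t(x)<1/n$ over all $j\ge m$ simultaneously. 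Your version buys a cleaner endgame (the small part contributes exactly $0$ to the TV rather than an $\eps$-error), at the cost of making the collision-counting slightly harder since the number of jumps of each chunk is no longer fixed; the paper's per-$j$ formulation keeps the fragmentation lineages rigid (each chunk does exactly $j-1$ splits), which is what makes the case analysis (i)/(ii)/(iii) in the proof of Lemma \ref{lemma:high-mom} tractable. You correctly identify this high-moment estimate as the technical crux and where the assumption $\bar\delta^{-1}\notin\N$ is genuinely used, so there is no gap in the plan, only a piece left unexecuted.
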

	
	\begin{remark}\label{rem:eps-bar}
		For simplicity, we stated the convergence result in \eqref{eq:conv-poly} for $\bar \delta^{-1}\notin \N$. Nevertheless, analogous convergence results hold also when $\bar \delta^{-1}\in \N$, provided that there exists some $\varphi$ such that $\log\log(n)\ll\varphi\ll\log(n)$ and
		\begin{equation}\label{HP:technical}
			| j\delta-1|\gg \frac{\varphi}{\log(n)}\,,\qquad \forall j\in\N\,.
		\end{equation}
		Clearly,  under this additional assumption, we have, for any $j\in \N$, the following dichotomy
		\begin{equation}\label{eq:iota2}
			\text{either}\qquad	k^{-j}\ll \frac{e^{-\varphi}}{n}\qquad\text{or}\qquad 	k^{-j}\gg \frac{e^\varphi}{n}\fstop
		\end{equation}
		In particular, under \eqref{HP:technical},  \eqref{eq:conv-poly} holds if  $\delta \uparrow \bar \delta$, whereas $\delta \downarrow \bar \delta$ yields \eqref{eq:conv-poly} with \textquotedblleft $<$\textquotedblright\ in place of \textquotedblleft $\le$\textquotedblright.	
		
		In cases where the condition \eqref{HP:technical} fails to hold, simulations suggest that neither one of the bounds in Proposition \eqref{pr:poly} is sharp; see Figure \ref{fig:eps-1/2} for some simulations when $k=\sqrt n$, to be compared with Figure \ref{fig:eps-07},  which considers  an instance of the case $\bar \delta^{-1}\notin \N$.
	\end{remark}
	\begin{figure}
		\includegraphics[width=6cm]{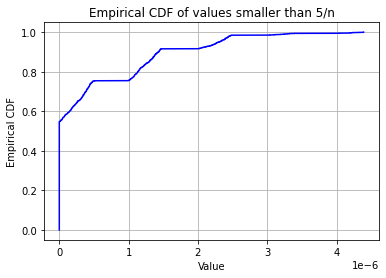}\qquad
		\includegraphics[width=6cm]{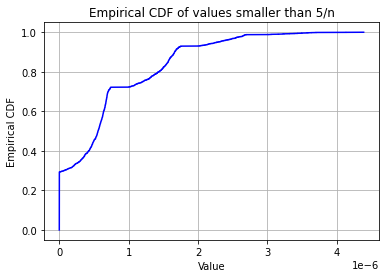}\\
		\includegraphics[width=6cm]{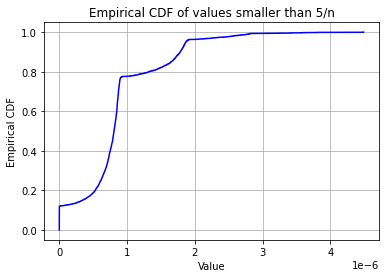}\qquad
		\includegraphics[width=6cm]{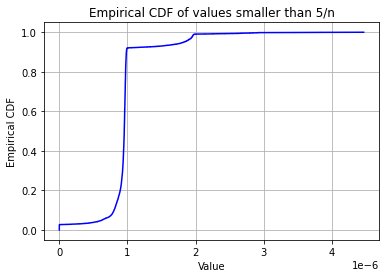}\\
		\includegraphics[width=6cm]{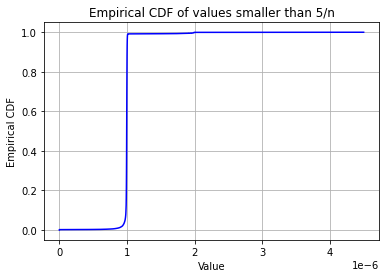}\qquad
		\includegraphics[width=6cm]{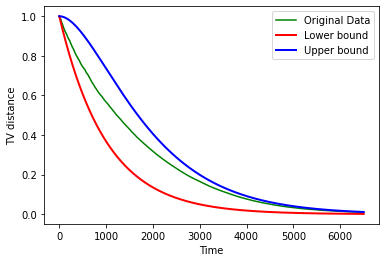}
		\caption{We simulate the process with $n=10^6$ and $k=\sqrt{n}$, starting at $\eta_{0}=\eta^{\rm start}$. We plot the empirical CDF of the entries of $\eta_t$ restricted to the entries for which $\eta_t\le\frac5n$, when $t$ is the first time in which the sequence $d_{\rm TV}(t)$ assumes the values $0.7, 0.5,0.3,0.1,0.01$. The final plot is the behavior of the TV distance and the bounds in Proposition \ref{pr:poly}.}\label{fig:eps-1/2}
	\end{figure}
	
	\begin{figure}
		\includegraphics[width=6cm]{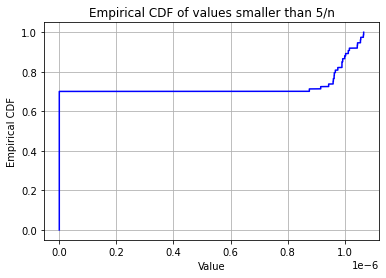}\qquad
		\includegraphics[width=6cm]{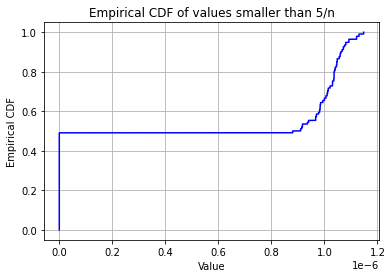}\\
		\includegraphics[width=6cm]{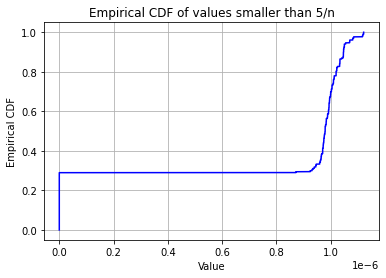}\qquad
		\includegraphics[width=6cm]{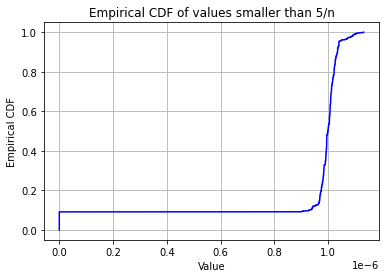}\\
		\includegraphics[width=6cm]{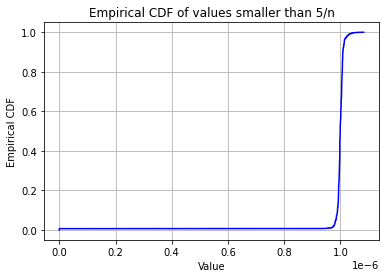}\qquad
		\includegraphics[width=6cm]{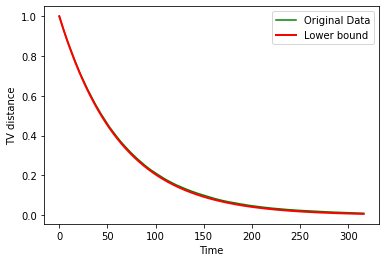}
		\caption{Same as Figure \ref{fig:eps-1/2}, but with $k=\lfloor n^{0.7}\rfloor=15849$. In this case, contrarily to the case $\varepsilon=1/2$ in Figure \ref{fig:eps-1/2}, the empirical mass distribution on the scale $1/n$ is unimodal, having its bulk around $1$.}\label{fig:eps-07}
	\end{figure}

	We split the proof of the above proposition into lower and upper bounds in the two subsequent subsections.

		\subsubsection{Proof of the lower bound in Proposition \ref{pr:poly}}
	For any $t\ge 0$, we employ the notation in Section \ref{suse:general-lb} and call $w_t$ the sub-probability measure on $V$ obtained by removing from $\eta_t$ all the piles of size smaller than $e^{\varphi}/n$, for some prescribed $\log\log(n)\ll\varphi\ll\log(n)$. Thanks to Lemma \ref{lem:glb}, it suffices to prove the following lemma.
		\begin{lemma}\label{lemma:tilde-eta}
			Fix $\beta>0$, $t=\bar t(\beta)\coloneqq\lfloor\beta \tfrac{n}k\rfloor$ and consider the process starting at $\eta_{0}=\eta^{\rm start}$. Then, 
			\begin{equation}
				 |w_t|- \Pr\left({\rm Poi}(\beta)<\frac{1-\delta}{\delta}\right)\overset{\Prob}\longrightarrow  0\fstop
			\end{equation}
		\end{lemma}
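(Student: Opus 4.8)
The plan is to compute the first two moments of $|w_t|$ and conclude via Chebyshev's inequality. Throughout fix $\beta>0$, set $t=\bar t(\beta)=\lfloor\beta\tfrac nk\rfloor$, and recall that $w_t$ is obtained from $\eta_t$ by deleting every pile of size $<e^\varphi/n$, where $\log\log n\ll\varphi\ll\log n$. We run the pile dynamics from $\eta^{\rm start}$, that is, from $k$ equal piles of mass $k^{-1}$ at $k$ distinct vertices, placing the marked chunk $U$ in one of them chosen proportionally to the masses, so that \eqref{eq:pile-size-ex-mass} still reads $\E[|w_t|]=\Prob(|\zeta_t(U)|\ge e^\varphi/n)$. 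Since $X\equiv k$, the reasoning of Section \ref{sec:size-pile} (now with $|\zeta_0(U)|=k^{-1}$) gives $|\zeta_t(U)|=k^{-(1+T)}$, where $T=\#\{s\le t:Z_s=k\}\sim{\rm Bin}(t,k/n)$, so that
\begin{equation}
\E[|w_t|]=\Prob\Big(1+T\le\tfrac{\log n-\varphi}{\log k}\Big)=\Prob\Big(T\le\tfrac{1-\delta}{\delta}-\tfrac{\varphi}{\delta\log n}\Big)\fstop
\end{equation}
Since $t\tfrac kn=\lfloor\beta\tfrac nk\rfloor\tfrac kn\to\beta$ and $\tfrac kn\to0$, Le Cam's inequality yields $\|{\rm Bin}(t,\tfrac kn)-{\rm Poi}(\beta)\|_{\rm TV}\le t(\tfrac kn)^2+|t\tfrac kn-\beta|\to0$. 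Because $\varphi\ll\log n$ and, by \eqref{eq:eps-bar}, $\tfrac{1-\delta}{\delta}\to\tfrac{1-\bar\delta}{\bar\delta}\notin\N$, for $n$ large the threshold above lies strictly between two consecutive integers and shares its integer part with $\tfrac{1-\delta}{\delta}$; as $T$ is integer-valued this gives, for $n$ large,
\begin{equation}
\E[|w_t|]=\Prob\big(T<\tfrac{1-\delta}{\delta}\big)=\Pr\big({\rm Poi}(\beta)<\tfrac{1-\delta}{\delta}\big)+o(1)\fstop
\end{equation}

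For the second moment I would follow two independent chunks $U,U'$ as in Section \ref{sec:piles}; the pile--chunk bookkeeping underlying \eqref{eq:pile-size-ex-mass} and \eqref{eq:super-general} then gives $\E[|w_t|^2]=\Prob\big(|\zeta_t(U)|\ge e^\varphi/n,\ |\zeta_t(U')|\ge e^\varphi/n\big)$. All piles of $\eta^{\rm start}$ have equal mass, so with probability $1-k^{-1}$ the two chunks start in distinct piles, hence at distinct vertices; moreover over the horizon $t=\lfloor\beta\tfrac nk\rfloor$ the probability that some block contains the current positions of both chunks is at most $t\,\tfrac{k(k-1)}{n(n-1)}=O(k/n)=o(1)$. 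On the complementary event $\cE$ (distinct positions throughout, never a common block) the two pile-size histories are governed by disjoint portions of the block randomness, so $(|\zeta_t(U)|,|\zeta_t(U')|)$ can be coupled, at a total-variation cost $\le\Prob(\cE^\complement)=o(1)$, with a pair of independent copies of $k^{-(1+T)}$ --- the same decoupling mechanism exploited around \eqref{eq:coupling}--\eqref{eq:int-split-2}. Therefore $\E[|w_t|^2]=\Prob(|\zeta_t(U)|\ge e^\varphi/n)^2+o(1)=(\E[|w_t|])^2+o(1)$.

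Combining, ${\rm Var}(|w_t|)=\E[|w_t|^2]-(\E[|w_t|])^2=o(1)$, so Chebyshev's inequality gives $|w_t|-\E[|w_t|]\overset{\Prob}\longrightarrow0$; together with the first-moment computation this is exactly $|w_t|-\Pr\big({\rm Poi}(\beta)<\tfrac{1-\delta}{\delta}\big)\overset{\Prob}\longrightarrow0$, as claimed.

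The step I expect to be the crux is the decoupling in the second-moment bound: one must show rigorously that, conditionally on the two chunks never being involved in a common block update throughout $[1,t]$ (and starting from distinct piles), the pair of pile-size histories becomes independent up to an $o(1)$ total-variation error. Concretely this amounts to resampling the sequence of blocks separately along the two vertex-disjoint trajectories, and it is the deterministic-block counterpart of the coupling carried out in the proof of Theorem \ref{th:profile1} (cf.\ \eqref{eq:coupling}--\eqref{eq:int-split-2}); the rest of the argument is routine.
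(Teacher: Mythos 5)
Your proof is correct and follows essentially the same route as the paper: express $\E[|w_t|]$ via Lemma~\ref{lemma:size-pile} as $\Prob(T\le \tfrac{1-\delta}{\delta}-\tfrac{\varphi}{\delta\log n})$ with $T\sim{\rm Bin}(t,k/n)$, pass to ${\rm Poi}(\beta)$ by Le Cam, use $\varphi\ll\log k$ and $\bar\delta^{-1}\notin\N$ to drop the $\varphi$-shift, and then establish $\E[|w_t|^2]=\E[|w_t|]^2+o(1)$ by the same chunk-pair coupling used in the proof of \eqref{eq:aim-2nd-mom} (with the simplification $t_{\rm sep}=0$, $\theta=1$, exactly as the paper notes). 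Your sign in the threshold, $-\varphi/(\delta\log n)$, is in fact the correct one.

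One small phrasing point: under the event that $U,U'$ never share a block, the two pile-size histories are not ``governed by disjoint portions of the block randomness''---the block at each step is common to both, and what the event really enforces is mutual exclusion of the hit indicators. It is the explicit per-step total-variation coupling of \eqref{eq:coupling} (costing $O(t\,k^2/n^2)=O(\beta k/n)\to 0$ here) that delivers approximate independence, not a literal disjointness of randomness; since you cite \eqref{eq:coupling}--\eqref{eq:int-split-2} as the mechanism, the substance of your argument is right, only the informal gloss is slightly misleading.
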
  
		
		\begin{proof}
			We start by showing that
			\begin{equation}\label{eq:mean-tilde-eta}
				\E[|w_t|]-\Pr\left({\rm Poi}(\beta)<\frac{1-\delta}{\delta}\right)\overset{\Prob}\longrightarrow  0\fstop
			\end{equation}
			By specializing Lemma \ref{lemma:size-pile} to the particular case  $Y\equiv k$, and recalling that $\eta_0=\eta^{\rm start}$, we have
			\begin{align*}
			\E[|w_t|]=\Prob\(|\zeta_t(U)|>\frac{e^\varphi}{n} \)&=\Prob\(k^{-T-1}>\frac{e^\varphi}{n}\)=\Prob\(T< \frac{1-\delta}{\delta}+\frac{\varphi}{\log(k)} \)\,,
			\end{align*}
		where $T\overset{\rm d}{=}{\rm Bin}(t,k/n)$.
			By Le Cam theorem, we have
			\begin{equation}
				\|T-{\rm Poi}(\beta) \|_{\rm TV}\le \sum_{s=1}^t \frac{k^2}{n^2}\le \beta \frac kn\longrightarrow0\comma
			\end{equation}
			hence, being $\varphi\ll\log(k)$, \eqref{eq:mean-tilde-eta} follows. 
			
				Using Chebyshev inequality, we conclude the proof of the lemma as soon as we prove
			\begin{equation}\label{eq:mom2-tilde-eta}
				\E[|w_t|^2]\le \E[|w_t|]^2+o(1)\fstop
			\end{equation}
			The proof of this estimate follows exactly the same lines of arguments employed in the proof of \eqref{eq:aim-2nd-mom}, with a single important difference:  since here we start from $\eta_0=\eta^{\rm start}$, then $\tau_{\rm sep} =0$ (cf.\ \eqref{eq:def-tsep}) with probability $1-\frac1k$; therefore, we can set $t_{\rm sep}=0$ and $\theta =1$ in that proof. (Note that, with these choices, the event $\cE_3$ therein is irrelevant.)		
	\end{proof}
		
		\subsubsection{Proof of the upper bound in Proposition \ref{pr:poly}}
		Fix $\beta>0$, $t=\bar t(\beta)\coloneqq\lfloor\beta \tfrac{n}k\rfloor$, and consider the process starting from $\eta_{0}=\eta^{\rm start}$.
	Define, for $j\ge 1$, $\hat{\eta}^{(j)}$ as the restriction of $\eta_t$ to the piles having size $k^{-j}$.

		For any fixed $C\in\N$, arguing as in the proof of Lemma \ref{lemma:tilde-eta}, and taking a union bound, we get
		\begin{equation}\label{eq:conc-size}
			\max_{j\le C}\bigg||\hat{\eta}^{(j)}|-p_{j-1}(\beta)\bigg|\overset{\Prob}{\to}0\comma\quad \text{with}\  p_{j}(\gamma)\coloneqq\Pr({\rm Poi}(\beta)=j)\fstop
		\end{equation}
		Fix some $\phi$ satisfying  $\log\log(n)\ll\phi\ll\log(n)$, and define
		\begin{equation}\label{eq:iota}
			j_\star:=\max\left\{j\in\N\mid k^{-j}> \frac{e^{\varphi}}{n}  \right\}\fstop
		\end{equation}
		Note that, for this choice of $\varphi$ and since $\bar \delta^{-1}\notin \N$ by assumption, we have
		\begin{equation}
			k^{-j}<\frac{e^{-\varphi}}n\comma\quad j>j_\star\fstop
		\end{equation}
		
	 Now fix any $j\in(j_\star,C]$. We now aim at showing that: for any $\eps>0$,
		\begin{equation}\label{eq:prob-j}
			\lim_{n\to\infty}\Prob\(\exists x\in V\text{ s.t. }{\hat{\eta}^{(j)}(x)}\ge \frac{(1+\eps) p_{j-1}(\beta)}{n} \)=0\fstop
		\end{equation}
		By an application of the union bound and the generalized Markov inequality we deduce that, for any $q\in\N$,
		\begin{equation}\label{eq:gen-mark}
			\Prob\(\exists x\in V\text{ s.t. }{\hat{\eta}^{(j)}(x)}\ge \frac{(1+\eps)p_{j-1}(\beta)}{n} \)\le n\times \frac{\max_{x\in V}\E\bigg[\big({\hat{\eta}^{(j)}(x)}\big)^q\bigg]}{n^{-q}\times  (1+\eps)^q  p_{j-1}(\gamma)^q}\,.
		\end{equation}
		The following lemma, which controls the high moments of  $\hat{\eta}^{(j)}(x)$ for $j> j_\star$, is the key technical ingredient of our proof.
		\begin{lemma}\label{lemma:high-mom}
			Fix $j>j_\star$, $q=\lfloor \log(n)^2\rfloor$, and $\eps>0$. Then, for all $n$ sufficiently large, 
			\begin{equation}\label{eq:q-mom}
				\max_{x\in V}\E\bigg[\big({\hat{\eta}^{(j)}(x)}\big)^q\bigg]\le \left(\frac{(1+\tfrac{\eps}{2})p_{j-1}(\gamma)}n \right)^q\fstop
			\end{equation}
		\end{lemma}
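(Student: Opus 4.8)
The plan is to exploit the multi-chunk representation from Section~\ref{sec:piles}. Following $q$ infinitesimal chunks $U_1,\dots,U_q$ evolving from $\eta_0=\eta^{\rm start}$, the $q$-fold version of the quenched identity \eqref{eq:super-general} gives
\begin{equation}
\E\big[(\hat{\eta}^{(j)}(x))^q\big]=\Prob\bigg(\bigcap_{i=1}^q\big\{e_t(U_i)=x\,,\,|\zeta_t(U_i)|=k^{-j}\big\}\bigg)\comma
\end{equation}
and, since all blocks have size $k$, the event $\{|\zeta_t(U_i)|=k^{-j}\}$ is exactly the event that the pile of $U_i$ has undergone $j-1$ effective splittings in $[1,t]$ (recall that under $\eta^{\rm start}$ every pile already has size $k^{-1}$). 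I would then partition $\{1,\dots,q\}$ according to which chunks share a common pile at time $t$: since two chunks whose piles have ever differed never reunite, this partition $\mathcal G=\{G_1,\dots,G_r\}$ is well defined, and
\begin{equation}
\E\big[(\hat{\eta}^{(j)}(x))^q\big]=\sum_{\mathcal G}\Prob\big(\mathcal G\text{ is the pile-partition at }t\,;\ \text{the pile of each }G_\ell\text{ sits at }x\text{ with size }k^{-j}\big)\fstop
\end{equation}

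For a fixed $\mathcal G$ with block sizes $g_1,\dots,g_r$ I would bound the corresponding term by peeling off one group at a time. Picking a "leader" chunk in each group $G_\ell$, a single-chunk computation in the spirit of Lemma~\ref{lemma:size-pile}, together with a Le Cam bound for ${\rm Bin}(t,k/n)\approx{\rm Poi}(\beta)$ and the near-uniformity of the random walk distribution $P_{\rm RW}^t(\cdot,x)$ on the timescale $t=\lfloor\beta n/k\rfloor\asymp n/k$, gives, uniformly in $x$ and \emph{even after conditioning} on the trajectories of the previously peeled chunks,
\begin{equation}
\Prob\big(e_t(U)=x\,,\, U\text{ hit exactly }j-1\text{ times}\ \big|\ \mathcal F_{\rm peeled}\big)\le (1+\eps/8)\,\frac{p_{j-1}(\beta)}{n}\fstop
\end{equation}
For each of the remaining $g_\ell-1$ chunks of a group, sitting on the leader's pile forces it to make the same choice at each of the $j$ fragmentation events producing that pile (the one implicit in $\eta^{\rm start}$ plus the $j-1$ later ones), which costs an extra factor $(k^{-j})^{g_\ell-1}$. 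Multiplying over the $r$ groups yields, for every $\mathcal G$ with $r$ blocks,
\begin{equation}
\Prob\big(\mathcal G\text{ as above}\big)\ \le\ \Big((1+\eps/8)\tfrac{p_{j-1}(\beta)}{n}\Big)^{\!r}\big(k^{-j}\big)^{q-r}\fstop
\end{equation}

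The decisive input is that $j>j_\star$, together with $\bar\delta^{-1}\notin\N$, forces $j\bar\delta\ge\lceil 1/\bar\delta\rceil\bar\delta=:1+c_0$ with $c_0>0$, hence $n\,k^{-j}=n^{1-j\delta}\le n^{-c_0/2}$ for all $n$ large. Summing the previous bound over $\mathcal G$ and using the elementary estimate $S(q,q-m)\le q^{2m}/m!$ on the number of partitions of $[q]$ into $q-m$ blocks, I obtain
\begin{equation}
\E\big[(\hat{\eta}^{(j)}(x))^q\big]\ \le\ \Big((1+\eps/8)\tfrac{p_{j-1}(\beta)}{n}\Big)^{\!q}\sum_{m\ge 0}\frac{1}{m!}\Big(\tfrac{q^2\,n\,k^{-j}}{(1+\eps/8)\,p_{j-1}(\beta)}\Big)^{\!m}\ \le\ \Big((1+\eps/8)\tfrac{p_{j-1}(\beta)}{n}\Big)^{\!q}e^{o(1)}\comma
\end{equation}
where the last step uses $q^2\,n\,k^{-j}\le(\log n)^4 n^{-c_0/2}\to0$, which is precisely why the choice $q=\lfloor(\log n)^2\rfloor$ is harmless here. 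Since $\big((1+\tfrac\eps2)/(1+\tfrac\eps8)\big)^q\to\infty$, the right-hand side is at most $\big((1+\tfrac\eps2)p_{j-1}(\beta)/n\big)^q$ for all $n$ large, which is the claim.

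I expect the main difficulty to be the conditional single-chunk estimate with the sharp constant $(1+o(1))/n$, uniformly in $x\in V$ and in the conditioning: one must argue that however many updates the other chunks have already "consumed'', the last block hitting a given chunk still contains any prescribed vertex with probability $(1+o(1))k/n$, and the chunk then lands on it with probability $1/k$; unrolling this across the successive hits produces a geometric series with ratio $1/k$ plus a residual term of size $k^{-j}=o(1/n)$ --- precisely where $j>j_\star$ enters once more --- while simultaneously the number of hits concentrates around its Poissonian limit. A union bound over the $n$ vertices then makes the estimate uniform. Granted this, the combinatorial summation and the fragmentation penalty $(k^{-j})^{g_\ell-1}$ are routine.
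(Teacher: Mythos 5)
You take a genuinely different combinatorial route from the paper's. The paper bounds $\E\big[(\hat\eta^{(j)}(x))^q\big]=\Prob\big(\bigcap_{i\le q}\cX_i\cap\cJ_i\big)$ via the telescoping conditional chain
\begin{equation}
\Prob(\cJ_1)\,\Prob(\cX_1\mid\cJ_1)\prod_{i<q}\Prob\big(\cJ_{i+1}\mid\cdots\big)\,\Prob\big(\cX_{i+1}\mid\cdots\big)\comma
\end{equation}
and shows each per-chunk pair of conditional factors is $\le(p_{j-1}(\beta)+o(1))(1+o(1))/n$ by splitting according to whether $U_{i+1}$ always sits at an earlier chunk's current vertex (probability $(q/k)^j$, negligible since $j>j_\star$), departs and later rejoins once (giving the $(1+o(1))/n$ main contribution), or departs and rejoins more than once (order $n^{-2}$). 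Your plan partitions the chunk set by pile-colocation at time $t$, charges a per-leader factor $(1+\eps/8)p_{j-1}(\beta)/n$ and a per-non-leader sticking cost $k^{-j}$, and then sums over partitions with the Stirling bound $S(q,q-m)\le q^{2m}/m!$, so the excess over the leading term is $\exp\big(O(q^2 nk^{-j})\big)=e^{o(1)}$. This makes very transparent why $j>j_\star$ (hence $nk^{-j}\ll1$) and $q=\lfloor(\log n)^2\rfloor$ suffice; your sticking cost $(1/k)^j$ (same pile) is also sharper than the paper's tube estimate $(q/k)^j$ (same vertex), which is exactly what closes your bookkeeping. The one step you flag but do not execute --- the conditional per-leader estimate uniformly over the previously peeled trajectories --- is the technical heart in both routes; the paper resolves it by decomposing on the index of the last arrival at $x$, which produces precisely the geometric series in $1/k$ plus the boundary term $k^{-j}\ll1/n$ that you anticipate, carried through the case analysis (i)--(iii) under the conditioning. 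One small correction to your narrative: the per-leader probability is $(1+o(1))/n$ not because of near-uniformity of $P_{\rm RW}^t$ (which at $t\asymp n/k$ still carries a constant-order atom at the starting vertex) but because, conditionally on $j-1\ge1$ hits, the chunk's final position is uniform over its last hitting block, giving $(k/n)\cdot(1/k)=1/n$ up to the geometric correction.
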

		Observe that \eqref{eq:prob-j} follows at once from \eqref{eq:gen-mark} and \eqref{eq:q-mom}. We postpone the proof of Lemma \ref{lemma:high-mom} to the end of this section, and we now show how the upper bound in Proposition \ref{pr:poly}  follows from \eqref{eq:prob-j}.

				 Fix $\delta>0$. By the triangular inequality we have that, w.h.p., $	d_{\rm TV}(t)$ is bounded above by
		\begin{align}\label{eq:ub-tv}
			& \sum_{j\ge 1}|\hat{\eta}^{(j)} |\left\|\frac{\hat{\eta}^{(j)}}{|\hat\eta^{(j)}|}-\pi \right\|_{\rm TV}\\
			&\le \sum_{j=1}^{j_\star}|\hat{\eta}^{(j)} |+\sum_{j= j_\star+1}^C|\hat{\eta}^{(j)} |\left\|\frac{\hat{\eta}^{(j)}}{|\hat\eta^{(j)}|}-\pi \right\|_{\rm TV}+\sum_{j>C}|\hat{\eta}^{(j)} |\\
			&\le \left(1+\eps\right)\Pr\left({\rm Poi}(\beta)\le \frac{1-\delta}\delta\right)+\sum_{j= j_\star+1}^{C}|\hat{\eta}^{(j)} |\left\|\frac{\hat{\eta}^{(j)}}{|\hat\eta^{(j)}|}-\pi \right\|_{\rm TV}+\Pr({\rm Poi}(\beta)>C)+\delta\comma
		\end{align}
	where in the second line we bounded by $1$ the TV-distances associated to $j\le j_\star$ and $j>C$, whereas in the third one we used \eqref{eq:conc-size} and the following identity  
	\begin{equation}
	\sum_{j=1}^{j_\star}	p_{j-1}(\beta) = \Pr\left({\rm Poi}(\beta)\le j_\star-1\right) = \Pr\left({\rm Poi}(\beta)\le \frac{1-\delta}\delta\right)\comma
	\end{equation}
	which holds true  in view of the definition of $j_\star$ in \eqref{eq:iota} (recall $k=n^\delta$).
	Furthermore,  for any $j> j_\star$,  by \eqref{eq:conc-size} and Lemma \ref{lemma:high-mom},  we obtain, w.h.p.,
		\begin{align}
|\hat{\eta}^{(j)}| \left\|\frac{\hat{\eta}^{(j)}}{|\hat\eta^{(j)}|}-\pi \right\|_{\rm TV}&=\sum_{x\in V}\left[\hat{\eta}^{(j)}(x)- \frac{|\hat{\eta}^{(j)}|}{n}\right]_+\\
&\le\sum_{x\in V}\left[\hat{\eta}^{(j)}(x)- \frac{(1-\eps)p_{j-1}(\beta)}{n}\right]_+\\
&\le\sum_{x\in V}\left[\(1+\frac\eps2\)\frac{p_{j-1}(\beta)}{n}- \frac{(1-\eps)p_{j-1}(\beta)}{n}\right]_+=\frac32\eps p_{j-1}(\beta)\fstop
\end{align}
Plugging this estimate into the right-hand side of \eqref{eq:ub-tv}, we get, for any $\eps\in (0,1)$ and $C\in \N$,  w.h.p.,
\begin{equation}
	\begin{split}
		d_{\rm TV}(t)&
		\le \left(1+\eps\right)\Pr\left({\rm Poi}(\beta)\le \frac{1-\delta}\delta\right)+\frac32\eps+\Pr({\rm Poi}(\beta)>C)+\eps\comma
	\end{split}
\end{equation}
and the desired result follows by taking $C\to\infty$ and $\eps\to 0$.

We are left with the proof of Lemma \ref{lemma:high-mom}.
		\begin{proof}[Proof of Lemma \ref{lemma:high-mom}]
		Recall that $\eta_0=\eta^{\rm start}$, $\beta > 0$, 	$t=\bar t(\beta)\coloneqq\lfloor \beta\frac{n}{k}\rfloor$, $q=\lfloor \log(n)^2\rfloor$, and $j> j_\star$. Consider the collection of random chunks $U_1,\dots,U_q$. For a given $x\in V$, define the events
		\begin{equation}\label{eq:events-J-X}
			\cJ_i=\{|\zeta_t(U_i)|=k^{-j} \}\comma\qquad \cX_i=\{e_t(U_i)=x \}\comma\qquad i\le q\fstop
		\end{equation}
		Note that, quenching  the block dynamics up to time $t$, we write
				\begin{equation}\label{eq:ann-2}
				\big({\hat \eta^{(j)}(x)}\big)^q={\mathsf{P}_{U_1,\dots,U_q}\(\bigcap_{i\le q}\cX_i\cap\cJ_i\)}\comma			\end{equation}
		and taking  expectations on both sides yields
		\begin{align}\label{eq:ann}
			\E&\left[({\hat \eta^{(j)}(x)})^q\right]=\Prob\(\bigcap_{i\le q}\left(\cX_i\cap\cJ_i\right)\)\\
			&=\Prob\(\cJ_1\)\Prob\(\cX_1\mid \cJ_1 \)\prod_{i=1}^{q-1}\Prob\big(\cJ_{i+1} \mid\cap_{\ell\le i}(\cJ_\ell\cap\cX_\ell)\big)\Prob\big(\cX_{i+1}\mid\cJ_{i+1}\cap_{\ell\le i}(\cJ_\ell\cap\cX_\ell)\big)\fstop
		\end{align}
		We start by bounding the first probability on the right-hand side of \eqref{eq:ann}. 
By the same argument employed to show \eqref{eq:mean-tilde-eta}
\begin{equation}
	\Prob(\cJ_1)\sim p_{j-1}(\beta)\,.
\end{equation}
	For the second probability on the right-hand side of \eqref{eq:ann} we claim that, uniformly in $x\in V$, $t\ge 0$ and $j> j_\star$,
\begin{equation}\label{eq:claim}
	\Prob(\cX_1\mid \cJ_1)\le (1+o(1))\, \frac1n \fstop
\end{equation} 
Indeed, letting, for all $h=0,\ldots, j-1$,  $\cY_1(h)$ be the event that the last arrival at $x$ occurs at the $h$-th jump of $U_1$ (here, $\cY_1(0)$ represents the event that $U_1$ starts at $x\in V$ and never leaves this vertex for the subsequent $j$ jumps). Then, 
\begin{align}
	\Prob(\cX_1\mid \cJ_1) &= \sum_{h=0}^{j-1}\Prob(\cX_1\cap \cY_1(h)\mid \cJ_1)\\
	&=\IND_{\eta^{\rm start}(x)>0}\,k^{-j} + \sum_{h=1}^{j-1}
	\Prob(\cX_1\cap \cY_1(h)\mid \cJ_1) \\
	&\le \IND_{\eta^{\rm start}(x)>0}\,k^{-j} + \sum_{h=1}^{j-1} \frac{k-1}{n-1}\frac{1}{k} k^{-(j-1-h)}\comma
\end{align}
where the second identity follows because the probability of starting at $x$ is $\frac1k$, and  never leaving it for $j-1$ jumps is $k^{-(j-1)}$, whereas the second inequality used that the probability of jumping to $x$ starting from somewhere else equals $\frac{k-1}{n-1}\frac1k$. Since we chose $j> j_\star$ and assumed that $\varphi \ll \log(n)$, we have $k^{-j}\ll \frac1n$, ensuring the claim in \eqref{eq:claim}.

Now we aim at proving that: uniformly, for all $i< q$,
\begin{equation}
	\Prob\big(\cJ_{i+1}\mid \cap_{\ell\le i}(\cJ_\ell\cap\cX_{\ell})\big) \le p_{j-1}(\beta)+o(1)\fstop
\end{equation}
The idea is the following: consider the event
\begin{equation}\cQ_{i+1}=\{\nexists (s,\ell)\in\{0,\dots,t\}\times\{1,\dots,i\}\ \text{s.t.}\  e_s(U_{i+1}),e_s(U_{\ell})\in A_s \}\comma
	\end{equation}
and split
\begin{equation}
	\Prob\big(\cJ_{i+1}\mid \cap_{\ell\le i}(\cJ_\ell\cap\cX_{\ell})\big)\le 	\Prob\big(\cQ_{i+1}^c\mid \cap_{\ell\le i}(\cJ_\ell\cap\cX_{\ell})\big)+	\Prob\big(\cJ_{i+1}\cap \cQ_{i+1}\mid \cap_{\ell\le i}(\cJ_\ell\cap\cX_{\ell})\big)\fstop
\end{equation}
In words:
\begin{itemize}
	\item Either the $(i+1)$-th chunk ever jump together with some other chunk among $U_1,\dots, U_i$: this event has probability at most
	\begin{equation}
		\frac{i}{k} + i\left(j-1\right)  \frac{k-1}{n-i}\le \frac{q}{k}+ \left(j-1\right) q\,\frac{k}{n-q}\fstop
	\end{equation}
	Indeed, $\frac{i}{k}$ estimates the probability that $U_{i+1}$ starts at the same location of either one of $U_1,\dots,U_i$; provided that $U_{i+1}$ does not start at any of the initial location of $U_1,\dots,U_i$, the second summand accounts for the probability that there exist a chunk among $U_1,\dots,U_i$ and an update of such a chunk (which are exactly $j-1$ in our conditional probability space) in which also $U_{i+1}$ is involved.
	\item Or, under $\cQ_{i+1}$, $U_{i+1}$ does exactly $j-1$ jumps. Call $\bar p$ the probability of this event, which can be bounded uniformly in $i< q$ as follows
	\begin{equation}
\bar p\le \max_{a\in\{0,\dots, q(j-1) \}}\max_{b\in\{0,\dots, q \}}\Prob\({\rm Bin}(t-a, \tfrac k{n-b})=j-1\)\,.
	\end{equation}
	Indeed, in the conditional probability space and under $\cQ_{i+1}$, at each time $s\le t$ the probability of sampling $e_s(U_{i+1})$ is either zero (if any among $U_1,\dots,U_i$ is involved in the update at time $s$) or bounded above by $\frac{k}{n-(i-1)}$ (if none among $U_1,\dots,U_i$ is involved in the update at time $s$). 
	By the Poisson approximation of Binomials,
	\begin{equation}
	\max_{a\in\{0,\dots, q(j-1)\}}\max_{b\in\{0,\dots, q \}}	\left|\Pr\big({\rm Bin}(t-a, \tfrac k{n-b})=j-1\big)-\Pr\big({\rm Poi}(\beta )=j-1\big) \right|\to 0\,.
	\end{equation}
	Therefore,
	\begin{equation}
		\bar p\le p_{j-1}(\beta)+o(1)\fstop
	\end{equation}
\end{itemize}
		
		We are left with considering the probability
		\begin{equation}
			\Prob\big(\cX_{i+1}\mid\cJ_{i+1}\cap_{\ell\le i}(\cJ_\ell\cap\cX_\ell)\big)\comma\qquad i\in\{1,\dots,q-1\}\fstop
		\end{equation}
		To ease the understanding, we start by considering the case $i=1$.
		For a fixed realization of the trajectory of $U_1$ which satisfies $\cJ_1\cap\cX_1$ and conditionally on $\cJ_2$, the event that $\cX_2$ can occur in three different ways:
		\begin{enumerate}
			\item[(i)] either $U_1$ and $U_2$ are at the same vertex at all times;
			\item[(ii)] or $U_2$ and $U_1$ walk together for a number of jumps $r\in[0,j-1)$, the $r$-th jump put them on different vertices, and then they meet at the $s$-th jump of both $U_1$ and $U_2$, $s\in(r,j-1]$ and they continue their journey together up to reaching $x$ at their $(j-1)$-th jump;
			\item[(iii)] or the two chunks meet and separate more than once.
		\end{enumerate}
		Clearly the conditional probability of the event in (i) can be bounded from above by
		\begin{equation}
			k^{-j}\ll 	 \frac{e^{-\varphi}}n\ll \frac1n\comma
		\end{equation}
		where we simply used \eqref{eq:iota}, $j> j_\star$, and \eqref{eq:iota2}. 
		Alternatively, called $\cG_2$ the event in (ii), we have
		\begin{align}
			\Prob(\cG_2\mid \cJ_1\cap\cX_1\cap\cJ_2)\le\sum_{s=1}^{j-1}  \frac{k-1}{n-1}\times \frac{1}{k}\times k^{-j+1+s}\,,
		\end{align}
		Indeed, given that they are on the same vertex, the probability they remain on the same vertex after a jump is $k^{-1}$. On the other hand, if they are on different vertices, the probability they meet at the next jump of $U_1$ (which is measurable) is $\frac{k-1}{n-1}\times \frac{1}{k}$. Therefore
		\begin{align}
			\Prob(\cG_2\mid \cJ_1\cap\cX_1\cap\cJ_2)&\le  \sum_{s=1}^{j-1} \frac{k-1}{n-1}\times \frac{1}{k}\times k^{-j+1+s}\\
			&\sim\frac{1}{n} \sum_{s=1}^{j-1}  k^{-j+1+s}\sim \frac{1}n\,.
		\end{align}
		Finally, the conditional probability of the event in (iii) can be immediately bounded by
		\begin{equation}\sum_{a=2}^{j/2}\left[\frac{k-1}{n-1}\times\frac1k\right]^a\lesssim n^{-2}\fstop \end{equation}
		In conclusion,
		\begin{equation}
			\Prob(\cX_2\mid \cJ_1\cap\cX_1\cap\cJ_2)\le (1+o(1)) \frac1n\,.
		\end{equation}
		We now argue iteratively. Fixed $i\ge 2$ and a realization of the trajectories of $U_1,\dots,U_{i}$ which satisfies $\bigcap_{\ell\le i}\cJ_\ell\cap\cX_\ell$. Conditionally on these trajectories and $\cJ_{i+1}$, the event $\cX_{i+1}$  can occur in either one of  the following three ways: 
		\begin{enumerate}
			\item[(i)]  for all times $t'\le t$, we have $$e_{t'}(U_{i+1})\in F_{t'}\coloneqq\{y\in V\mid e_{t'}(U_\ell)=y\text{ for some }\ell\le i \}\,;$$
			\item[(ii)]  $U_{i+1}$ walks together with at least one of the other chunks (possibly not always the same) for a number of jumps $r\in[0,j-1)$,  $U_{i+1}$ departs from all the other chunks at the $r$-th jump, and then it meets some $U_{\ell'}$ at the their $s$-th jump, for some $s\in(r,j-1]$, and they continue their journey together up to reaching $x$ at their $(j-1)$-th jump;
			\item[(iii)]   There exists times $0\le t^{(1)}<t^{(2)}<t^{(3)}<t^{(4)}\le t$ such that $e_{t^{(1)}}(U_{i+1})\not\in F_{t^{(1)}}$, $e_{t^{(2)}}(U_{i+1})\in F_{t^{(2)}}$, $e_{t^{(3)}}(U_{i+1})\not\in F_{t^{(3)}}$ and $e_{t^{(4)}}(U_{i+1})\in F_{t^{(4)}}$.
		\end{enumerate}
		Again, the conditional probabilities of the event in (iii) is easy to bound by
		\begin{equation}
		\sum_{a=2}^{j/2}\left[i \times \frac{k-1}{n-i}\times\frac1k\right]^a\ll n^{-1}\,.
		\end{equation}
		Similarly, the probability of the event in (i) can be bounded from above by
		\begin{equation}
			\(\frac{q}{k}\)^j\ll  \frac{q^je^{-\varphi}}{n}\ll \frac1n\,
		\end{equation}
where the asymptotic inequality follows by the choice
		$\varphi\gg \log\log(n)$, $q=\lfloor\log(n)^2\rfloor$, and $j> j_\star$ is constant (cf.\ \eqref{eq:iota} and \eqref{eq:iota2}).
		Concerning the conditional probability of the event in (ii), it can be bounded by
		\begin{align}
		&	\Prob\(\cG_{i+1}\,\bigg\rvert\, \cJ_{i+1}\cap\bigg(\bigcap_{\ell\le i}\cJ_\ell\cap\cX_\ell\bigg)\)\\
		&\qquad\le  \sum_{r=0}^{j-2} (\mathds{1}_{r>0}i+1)k^{-r}\sum_{s=r+1}^{j-1} (\mathds{1}_{s<j-1}i+1)\frac{k-1}{n-i}\times \frac{1}{k}\times k^{-j+1+s}\\
			&\qquad\sim\frac{1}{n-i}\sum_{r=0}^{j-2} k^{-r}(\mathds{1}_{r>0}i+1)\sum_{s=r+1}^{j-1}  k^{-j+1+s}(\mathds{1}_{s<j-1}i+1)\sim \frac1n\comma
		\end{align}
		where we only used $i\le q=\lfloor\log(n)^2\rfloor$. This ends the proof of the lemma.
		\end{proof}

	\section{Sharpness of the conditions in Theorems \ref{th:cutoff} and \ref{th:profile1}}\label{suse:alternative}
	In this section, we discuss a particularly simple  parametric class of block-size distributions of the form
	\begin{equation}\label{eq:par-ex}
		p_X(2)=1-\frac{a}{n}\comma \qquad p_X(  n  )=\frac{a}{n}\comma 
	\end{equation}
	for some positive sequence $a=a_n$, which plays the role of a real parameter. This parameter, running over $[0,n]$, interpolates between the two following extreme cases: (i) when all the blocks have size $2$ (i.e., $a=0$), thus, there is cutoff with (standard) Gaussian profile; (ii) when the blocks all have size $n$ (i.e., $a=n$), so that a single update suffices to reach perfect equilibrium. In particular, case (i) coincides with the model in \cite{chatterjee2020phase}.
	
		Since we are interested in the regime of small $a$,  we assume $a\ll 1$ all throughout this section. Under this condition, we have
	\begin{align}
		\E[X]&\sim 2\\
		\E[X \log X]&\sim 2 \log 2 + a \log n\\
		\E[X(\log X)^2]&\sim 2 (\log 2)^2 + a (\log n)^2 \comma
	\end{align}
	and, thus, 
	\begin{equation}
		\mu\sim \log 2+\frac{a}2\log n\comma \qquad \sigma^2 \sim 
		\frac{a}4(\log n)^2
		\fstop
	\end{equation}
	As a consequence, 
	the entropic product condition in  \eqref{eq:prod-condition2} is satisfied if and only $a\ll 1$, whereas  condition \eqref{eq:HP-noncutoff} is verified if and only if $a\ll (\log n)^{-1}$. Finally, note that condition \eqref{eq:lindeberg} holds true anytime $a\ll (\log n)^{-1}$.	
	
	We analyze  this simple example with two distinct scopes: 
	\begin{itemize}
		\item Section \ref{suse:trichotomy}. We examine the robustness of the cutoff phenomenon. Indeed, our process   corresponds to the regular block-size model in \cite{chatterjee2020phase}, but with the modification that, at any  time and independently of everything else, there is a small probability that the system immediately reaches  equilibrium. We show that a mixing trichotomy (see the dedicated section   below for more details) takes place depending on the scaling of $a$, and comment on the universality of this behavior. 
		\item  Section \ref{suse:linde}. We investigate the sharpness of Theorem \ref{th:profile1} by analyzing the regime $(\log n)^{-2}\lesssim a\ll (\log n)^{-1}$. This example shows that, even if \eqref{eq:prod-condition2} and \eqref{eq:HP-noncutoff} are in force, the convergence in \eqref{eq:statement-profile} might fail in absence of \eqref{eq:CLT} or, equivalently, \eqref{eq:lindeberg} (see Remark \ref{rem:lindeberg}).
	\end{itemize}

		\subsection{Cutoff, half-cutoff, and metastability: a trichotomy}\label{suse:trichotomy} Consider the example in  \eqref{eq:par-ex}. Depending on whether $a$ is much smaller than, larger than, or of the same order as $(\log n)^{-1}$, we  observe three different behaviors:

\smallskip \noindent
 \emph{Cutoff.} As already observed above, if $a\ll \log(n)^{-1}$, then Theorem \ref{th:profile1} applies. In particular, we have cutoff at $\frac{n\log n}{2\log 2}$ with a window of size $n\sqrt{\log n}$ and, since $\rho=\frac\sigma\mu\ll 1$, with a standard Gaussian profile.
	
	\smallskip
	\noindent
	 \emph{Metastability.} If $(\log n)^{-1}\ll a\ll 1$, although the entropic product condition \eqref{eq:prod-condition2} holds, the condition \eqref{eq:HP-noncutoff} fails in this context.	
	Actually,  it is not difficult to check that even the weaker WLLN in \eqref{eq:WLLN} does not hold; therefore, cutoff as in Theorem \ref{th:cutoff} does not occur.
	
		In this case, the random sequence $t\mapsto d_{\rm TV}(t)$ shows a ``cutoff at a random time'', on the scale $t_{\rm ent}\sim\frac{n}{a}\ll n\log n$. Indeed,  w.h.p., the sequence $t\mapsto d_{\rm TV}(t)$ will go from being almost one to be exactly zero in a single random step, coinciding with the  random time
		\begin{equation}
			\tau\coloneqq\{t\ge 1\mid X_t=n \}\fstop
		\end{equation}
		which is geometrically distributed with mean $n/a$. Since this value coincides with the mixing timescale $t_{\rm ent}\sim n/a$, 	
  the sequence $t\mapsto \E[d_{\rm TV}(t)]$ does {\em not} exhibit cutoff. Furthermore, passing to the expected distance-to-equilibrium, we observe an exponential decay: for every $s\ge 0$,   $\E[d_{\rm TV}(s t_{\rm ent})]\sim e^{-s}$. 
		
		This is the clear signature of a metastable behavior:  the system goes to equilibrium because of the realization of a single event occurring at a random exponential time. Furthermore, this example shows that the entropic product condition \eqref{eq:prod-condition2} alone does not suffice to imply cutoff.
	
	\smallskip \noindent
	 \emph{Half-cutoff.} In the intermediate regime $a= c(\log n)^{-1}$, $c> 0$, 
		 the sequence $t\mapsto d_{\rm TV}(t)$  exhibits cutoff at a random point on the scale $n\log n$. However, since $t_{\rm ent}=\frac{n\log n}{2\log 2+c}\asymp n\log n$, such a random time is constrained to be at most $t_{\rm CDSZ}=\frac{n\log n}{2\log 2}$, the cutoff time for the $2$-block model in \cite{chatterjee2020phase}.
		 	  Indeed,  under the event $\tau>(1+\varepsilon)t_{\rm CDSZ}$ (which has probability $e^{-(1+\varepsilon)\frac{c}{2\log 2}}$),  the sequence $d_{\rm TV}(t)$ exhibits cutoff at $t_{\rm CDSZ}$. On the other hand, for all $s\in(0,1)$ and $\delta>0$, 
		\begin{equation}\Prob\big(d_{\rm TV}(s t_{\rm CDSZ})>1-\delta \big)\sim e^{-s\,\frac{c}{2\log 2}}\comma
			\end{equation}
	and therefore, for any sequence $\varepsilon=\varepsilon_n\to 0$ sufficiently slow, we have
		$$\Prob\big(d_{\rm TV}((1-\varepsilon) t_{\rm CDSZ})>1-\delta \,,\, d_{\rm TV}((1+\varepsilon) t_{\rm CDSZ})<\delta\big)\sim e^{-\frac{c}{2\log 2}}\fstop $$
		We conclude that, for any fixed $s\neq 1$,
		\begin{equation}\label{eq:half-cutoff}
			\E[d_{\rm TV}(s t_{\rm CDSZ})]\longrightarrow\IND_{s<1}e^{-s\,\frac{c}{2\log 2}} \fstop
		\end{equation}
\smallskip

	Such a trichotomy cutoff--half-cutoff--metastability is not a peculiar feature of this model, only. Indeed, several other parametric models (see, e.g., \cite{caputo_quattropani_2021_RSA,caputo_quattropani_2021_SPA, avena2019random,avena2022linking}), have been shown to exhibit the very same behavior. 
	In fact, when considering a certain model exhibiting cutoff (such as the case $X= 2$ a.s.) and a parametric perturbation of the model which allows the system to reach equilibrium as consequence of the realization of a certain event which occurs at a random exponential time (such as the event $X=n$, in our case), one should expect the existence of a critical scale for the perturbation parameter in which the two effects (cutoff \textit{vs.}\ exponential decay) coexist, giving rise to  a \emph{half-cutoff} as in \eqref{eq:half-cutoff}.

	\subsection{The role of condition \eqref{eq:lindeberg} in Theorem \ref{th:profile1}}\label{suse:linde}
	Consider again the case $a\ll (\log n)^{-1}$.
	Under this condition, the resulting Block Average process can be coupled,  w.h.p., to the model with $a=0$ up to times $t\gg t_{\rm ent} = \frac{n\log(n)}{2\log(2)}$. As a consequence, both models exhibit cutoff at  $t_{\rm ent}$,  with cutoff  window  and	 standard Gaussian profile as predicted for the model in \cite{chatterjee2020phase}, i.e., \eqref{eq:par-ex} with $a=0$. 
	This complete cutoff phenomenology does not always correspond, however, to the one that Theorems \ref{th:cutoff-window} and \ref{th:profile1} would  indicate. 
	
	Indeed, choosing, e.g.,  $a=\log(n)^{-1-\alpha}$ for some $\alpha\in(0,1)$ in Theorem \ref{th:cutoff-window} would estimate the cutoff window by
	\begin{equation}
	t_{\rm w}\sim 	\left(1+\frac{\sigma}{\mu}\right)\sqrt{\frac{\log n}{\mu}}\asymp (\log n)^{1-\frac\alpha2}\comma
	\end{equation} 
	which is much larger than
	 $\sqrt{\log n}$, the actual size of the cutoff window in \cite{chatterjee2020phase}.	What goes wrong in Theorem \ref{th:profile1} is the validity of \eqref{eq:CLT} or, equivalently, \eqref{eq:lindeberg}. In this sense, conditions \eqref{eq:prod-condition2} and \eqref{eq:HP-noncutoff} without \eqref{eq:CLT} do not suffice to determine the cutoff profile.

	\appendix
	\section{}\label{app:duality}
	For completeness, here we collect some elementary computations from 
	Section \ref{sec:times}.
	
	\subsection{Duality in \eqref{eq:duality}}
For any $x\in V$ and $\eta=\eta_0\in \Delta$, we have
\begin{align}
	&\E[\eta_1(x)]=	\E[\varPsi_A(\eta)(x)]= \sum_{k=2}^n \frac{p_X(k)}{\binom{n}k}\sum_{|B|=k} \varPsi_B  \eta(x)\\
	& = \sum_{k=2}^n \frac{p_X(k)}{\binom{n}k} \sum_{|B|=k} \IND_B(x)\left(\frac1k \sum_{y\in B}\eta(y)\right) + \eta(x)\sum_{k=2}^n \frac{p_X(k)}{\binom{n}k}\sum_{|B|=k}\IND_{B^\complement}(x)\\
	&= \sum_{k=2}^n \frac{p_X(k)}{\binom{n}k}\sum_{|B|=k} \IND_B(x)\left(\frac1k\sum_{y:y\neq x}\IND_B(y)\,\eta(y) + \frac1k\, \eta(x) \right) + \eta(x)\sum_{k=2}^n \frac{p_X(k)}{\binom{n}k}\sum_{|B|=k}\IND_{B^\complement}(x)\\
	&= \sum_{y:y\neq x} \eta(y) \sum_{k=2}^n \frac{p_X(k)}{\binom{n}k k}\sum_{|B|=k}\IND_B(x)\IND_B(y) + \eta(x)\sum_{k=2}^n \frac{p_X(k)}{\binom{n}k}\sum_{|B|=k}\left(\frac1k \IND_B(x) + \IND_{B^\complement}(x)  \right)\\
	&= \sum_{y:y\neq x}\eta(y) \sum_{k=2}^n p_X(x)\frac{\binom{n-2}{k-2}}{\binom{n}k k} + \eta(x)\sum_{k=2}^n p_X(k) \left(\frac{\binom{n-1}{k-1}}{\binom{n}{k} k} + \frac{\binom{n-1}{k}}{\binom{n}{k}}\right)\\
	&= \sum_{y\in V:\,y\neq x}\eta(y) \sum_{k=2}^n p_X(k)\left(\frac{k-1}{n(n-1)}\right) + \eta(x)\sum_{k=2}^n p_X(k)\left(\frac1n+1-\frac{k}n\right)\\
	&= \frac{\E[X]-1}{n(n-1)}\sum_{y\in V:\,y\neq x}\eta(y) + \left(1-\frac{\E[X]-1}{n}\right)\eta(x)\ .
\end{align}
The right-hand side above indeed coincides with 
\begin{equation}\sum_{y\in V}\eta(y)\,P_{\rm RW}(y,x) = \sum_{y\in V}P_{\rm RW}(x,y)\,\eta(y)\fstop
\end{equation}

	\subsection{Proof of Proposition \ref{prop:L2}}\label{app:l2}
		Note that, for every $B\subset V$, we have
		\begin{align}
			\bigg\|\frac{\varPsi_B\eta}{\pi}\bigg\|_2^2-\bigg\|\frac{\eta}{\pi}\bigg\|_2^2 &= \bigg\|\IND_B \frac{\varPsi_B\eta}{\pi}\bigg\|_2^2- \bigg\|\IND_B \frac{\eta}{\pi}\bigg\|_2^2\\
			&= \frac{\eta(B)^2}{\pi(B)}-\bigg\|\IND_B\frac{\eta}{\pi}\bigg\|_2^2\\
			&= n\sum_{x\in B}\eta(x)\,\big(\varPsi_B\eta(x)-\eta(x)\big)\\
			&= \frac{n}{|B|}\sum_{x,y\in B}\eta(x)\, \big(\eta(y)-\eta(x)\big)\ .
		\end{align}
		We further get 
		\begin{align}
		&	\E\bigg[\bigg\|\frac{\eta_1}\pi-1\bigg\|_2^2\bigg]-\bigg\|\frac{\eta}\pi-1\bigg\|_2^2\\
		&= \sum_{k=2}^n p_X(k)\, \frac1{\binom{n}{k}} \sum_{|B|=k} \bigg(\bigg\|\frac{\varPsi_B\eta}{\pi}\bigg\|_2^2-\bigg\|\frac{\eta}{\pi}\bigg\|_2^2\bigg)\\
			&= \sum_{k=2}^n p_X(k)\, \frac1{\binom{n}k} \sum_{|B|=k} \frac{n}{k}\sum_{x,y\in B}\eta(x)\, \big(\eta(y)-\eta(x)\big)\\
			&= \sum_{x,y \in V} \sum_{k=2}^n p_X(k)\, \frac1{\binom{n-1}{k-1}} \sum_{|B|=k}\IND_B(x)\IND_B(y)\, \eta(x)\, \big(\eta(y)-\eta(x)\big)\\
			&= \sum_{\substack{x,y\in V\\
					x\neq y}}\eta(x)\, \big(\eta(y)-\eta(x)\big)\sum_{k=2}^n p_X(k)\, \frac1{\binom{n-1}{k-1}}\sum_{|B|=k}\IND_B(x)\IND_B(y)\\
			&= \sum_{x,y\in V}\eta(x)\, \big(\eta(y)-\eta(x)\big) \sum_{k=2}^n p_X(k)\, \frac{\binom{n-2}{k-2}}{\binom{n-1}{k-1}}\\
			&= \frac{\E[X]-1}{n-1}\sum_{x,y\in V}\eta(x)\, \big(\eta(y)-\eta(x)\big)\\
			&= -\frac1{t_{\rm rel}}\, \bigg\|\frac{\eta}\pi-1\bigg\|_2^2\comma
		\end{align}
		where for the last identity we used the definition in \eqref{eq:t-rel}.	
		Hence, we obtain
		\begin{equation}
		\E\left[\bigg\|\frac{\eta_1}\pi-1\bigg\|_2^2\right] = \bigg(1-\frac1{t_{\rm rel}}\bigg)\bigg\|\frac{\eta}\pi-1\bigg\|_2^2 \ ,
		\end{equation}
		which, by iterating and using the Markov property,  concludes the proof of Proposition \ref{prop:L2}.

		\subsection{Proof of Proposition \ref{pr:movas}}\label{app:movas}
	Observe that, for each block $B\subset V$, we have
		\begin{equation}
			D(\varPsi_B\eta\|\pi)-D(\eta\|\pi)= -\eta(B)\,  {\rm ent}_B(\eta)\ge -\eta(B) \log |B|\comma\quad \text{with}\ \eta(B)=\sum_{y\in B}\eta(y)\fstop
		\end{equation}
		Here, $|B|$ denotes the cardinality of $B\subset V$, while ${\rm ent}_B(\eta)\in [0,\log |B|]$  the relative entropy of $\eta\,\IND_B $
		with respect to the uniform measure on $B$. Hence, we have
		\begin{align}
\E\left[D(\eta_1\|\pi)\right]-D(\eta\|\pi)&= \sum_{k=2}^n p_X(k)\,\frac1{\binom{n}{k}}\sum_{|B|=k}\left(D(\varPsi_B\eta\|\pi)-D(\eta\|\pi)\right)\\
&\ge -\sum_{k=2}^n p_X(k)\, \frac{\log k}{\binom{n}{k}}\sum_{|B|=k}\eta(B)\\
&= -\sum_{k=2}^n p_X(k)\, \log k\, \frac{\binom{n-1}{k-1}}{\binom{n}k} = -\frac1n\, \E[X\log X]\fstop
		\end{align}
		Iterating this inequality, we obtain the desired claim.
		
		\begin{acknowledgement}
			M.Q.\ and F.S.\  are members of GNAMPA-INdAM. M.Q.\ thanks the
			German Research Foundation (project number 444084038, priority program SPP2265)
			for financial support.
			F.S.\   acknowledges financial support by “Microgrants 2022”, funded by Regione FVG, legge LR 2/2011.
		\end{acknowledgement}

		\bibliographystyle{alpha}
	\newcommand{\etalchar}[1]{$^{#1}$}

	\end{document}